\numberwithin{equation}{section}
\definecolor{darkred}{rgb}{1,0,0} 
\definecolor{darkgreen}{rgb}{0,0.6,0}
\definecolor{darkblue}{rgb}{0,0,0.8}
 \newcommand{\N}{\mathds{N}}
 \newcommand{\Z}{\mathds{Z}}
 \newcommand{\R}{\mathds{R}}
 \newcommand{\C}{\mathds{C}}
 \newcommand{\Wu}{W^{u}}
 \newcommand{\Ws}{W^{s}}
 \newcommand{\Eu}{E^{u}}
 \newcommand{\Es}{E^{s}}
 \newcommand{\Kbr}{K_{\mathrm{br}}}
 \newcommand{\Krad}{K_{\mathrm{rad}}}
 \newcommand{\AAA}{\mathcal{A}}
 \newcommand{\BB}{\mathcal{B}}
 \newcommand{\EE}{\mathcal{E}}
 \newcommand{\FF}{\mathcal{F}}
 \newcommand{\UU}{\mathcal{U}}
 \newcommand{\VV}{\mathcal{V}}
 \newcommand{\GG}{\mathcal{G}}
 \newcommand{\JJ}{\mathcal{J}}
 \newcommand{\SSS}{\mathcal{S}}
 \newcommand{\HH}{\mathcal{H}}
 \newcommand{\CC}{\mathcal{C}}
 \newcommand{\DD}{\mathcal{D}}
 \newcommand{\RR}{\mathcal{R}}
 \newcommand{\Sp}{\mathrm{Sp}}
 \newcommand{\Symp}{\mathrm{Symp}}
 \newcommand{\Diff}{\mathrm{Diff}}
 \DeclareMathOperator{\diam}{diam}
 \DeclareMathOperator{\interior}{int}
 \DeclareMathOperator{\rank}{rank} 
 \DeclareMathOperator*{\Per}{Per}
 \DeclareMathOperator*{\Fix}{Fix}
 \theoremstyle{plain}
 \newtheorem{MainThm}{Theorem}
 \newtheorem{Thm}{Theorem}[section]
 \newtheorem{Prop}[Thm]{Proposition}
 \newtheorem{Lemma}[Thm]{Lemma}
 \newtheorem{Cor}[Thm]{Corollary}
 \theoremstyle{definition}
 \newtheorem{Remark}[Thm]{Remark}
\title[Proof of the $C^2$-stability conjecture]{Proof of the $\bm{C^2}$-stability conjecture\\for geodesic flows of closed surfaces} 
\author{Gonzalo Contreras}
\address{Gonzalo Contreras\newline\indent 
Centro de Investigaci\'on en Matem\'aticas\newline\indent 
A.P. 402, 36.000, Guanajuato, GTO, Mexico}
\email{gonzalo@cimat.mx}
\author{Marco Mazzucchelli}
\address{Marco Mazzucchelli\newline\indent CNRS, UMPA, \'Ecole Normale Sup\'erieure de Lyon\newline\indent 46 all\'ee d'Italie, 69364 Lyon, France}
\email{marco.mazzucchelli@ens-lyon.fr}
\thanks{Gonzalo Contreras is partially supported by CONACYT, Mexico, grant A1-S-10145. Marco Mazzucchelli is partially supported by the ANR grants CoSyDy, ANRCE40-0014, and COSY, ANR-21-CE40-0002.}
\date{September 22, 2021}
\keywords{Structural stability, geodesic flows, Reeb flows, surfaces of section}
\subjclass[2010]{37D40, 53D10, 53C22}
\begin{document}

\begin{abstract}
We prove that a $C^2$-generic Riemannian metric on a closed surface has either an elliptic closed geodesic or an Anosov  geodesic flow. As a consequence, we prove the $C^2$-stability conjecture for Riemannian geodesic flows of closed surfaces: a $C^2$-structurally stable Riemannian geodesic flow of a closed surface is Anosov. In order to prove these statements, we establish a general result that may be of independent interest and provides sufficient conditions for a Reeb flow of a closed 3-manifold to be Anosov.
\tableofcontents
\end{abstract}

\maketitle

\vspace{-50pt}

\section{Introduction}

\subsection{Main results}
In his seminal paper \cite{Poincare:1905ub}, Poincar\'e claimed that any convex 2-sphere in $\R^3$ has a (possibly degenerate) elliptic simple closed geodesic. As it turned out, this claim was not correct: Grjuntal \cite{Grjuntal:1979uy} provided an example of convex 2-sphere all of whose simple closed geodesics are hyperbolic (the example may have non-hyperbolic closed geodesics with self-intersections). Nevertheless, after dropping the requirement of the absence of self-intersections, Poincar\'e's claim was at least almost correct. Indeed, about a century later, in the year 2000, Hermann announced the existence of an elliptic closed geodesic on any $C^2$-generic positively curved Riemannian 2-sphere. The first author and Oliveira \cite{Contreras:2004vh} extended this result beyond the class of convex 2-spheres: there is an open dense subset $\UU$ of the space of Riemannian metrics on $S^2$ such that, for any choice of Riemannian metric $g\in\UU$, the Riemannian 2-sphere $(S^2,g)$ has an elliptic closed geodesic. The purpose of this paper is to extend this result to all closed surfaces, and derive as a consequence a proof of the $C^2$-stability conjecture for their geodesic flows. The extension will involve the notion of Anosov geodesic flow  \cite{Anosov:1967wm}. We recall that a flow on a closed manifold is called \emph{Anosov} when the whole manifold is hyperbolic for the flow.

Throughout this paper, all Riemannian metrics are assumed to be smooth, meaning $C^\infty$. Nevertheless, we will often endow the space of smooth Riemannian metrics on a given closed manifold with the $C^2$ topology.
Our first main theorem is the following.

\begin{MainThm}
\label{t:main}
For any closed surface $M$, there exists a $C^2$-open dense subset $\UU$ of the space of smooth Riemannian metrics on $M$ such that, for all $g\in\mathcal{U}$, one of the following two conditions is verified:
\begin{itemize}

\item The Riemannian surface $(M,g)$ has an  elliptic closed geodesic.

\item The geodesic flow of $(M,g)$ is Anosov.

\end{itemize}
\end{MainThm}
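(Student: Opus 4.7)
Let $\UU_1$ denote the set of metrics admitting a non-degenerate elliptic closed geodesic (that is, a closed geodesic whose transverse Floquet multipliers lie on the unit circle and are distinct from $\pm 1$), and $\UU_2$ the set of metrics whose geodesic flow is Anosov. I will prove the theorem with $\UU:=\UU_1\cup\UU_2$, treating openness and density separately. Openness of $\UU_2$ is the structural stability of Anosov flows. Openness of $\UU_1$ follows because a non-degenerate elliptic closed orbit persists under $C^1$-small perturbations of the flow, while $C^2$-small perturbations of the metric induce $C^1$-small perturbations of the geodesic flow on the unit tangent bundle $SM$. The substance is therefore the density statement.

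For density, given an arbitrary smooth metric $g$, I would carry out a two-step perturbation. First, apply a Kupka--Smale type density theorem adapted to geodesic flows to $C^2$-approximate $g$ by a metric $g_1$ all of whose closed geodesics are non-degenerate (each either hyperbolic or non-degenerate elliptic) and whose invariant manifolds meet transversally. If $g_1\in\UU_1$ one stops. Otherwise every closed geodesic of $g_1$ is hyperbolic, and one invokes the general sufficient condition for a Reeb flow on a closed $3$-manifold to be Anosov announced in the abstract. Applied to the geodesic flow of $g_1$---which is a Reeb flow on $SM$---the criterion is designed to produce the following dichotomy: either the flow of $g_1$ is already Anosov, whence $g_1\in\UU_2$, or its failure of uniform hyperbolicity is witnessed by a hyperbolic periodic orbit along which no uniformly dominated splitting exists; such an orbit can then be converted into a non-degenerate elliptic closed geodesic by an arbitrarily $C^1$-small Reeb perturbation that is realised as a $C^2$-small perturbation of the Riemannian metric, putting the perturbed metric into $\UU_1$.

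The main difficulty is twofold. The first is the Reeb-flow criterion itself, which should combine Ma\~n\'e--Hayashi--Wen ergodic-closing and connecting arguments with a fine two-dimensional symplectic analysis of Poincar\'e return maps on (likely global) surfaces of section; the heuristic ``Anosov or bifurcate an elliptic orbit'' is well understood for area-preserving surface maps, so part of the task is to reduce the $3$-dimensional Reeb dynamics to a $2$-dimensional return map and to propagate a periodic dominated splitting from the closed orbits to the whole of $SM$. The second, and I expect more delicate, issue is a realisation problem: only a restricted class of Reeb perturbations of a geodesic flow come from genuine metric changes, so every local flow perturbation invoked in the argument---Franks/Moser-type lemmas, $C^1$ closing and connecting lemmas, and the bifurcation producing the elliptic orbit---must be executed within the class of perturbations induced by $C^2$-small changes of the Riemannian metric on $M$. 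Negotiating this constraint, rather than the soft perturbation theory per se, is where I expect the technical weight of the proof to lie.
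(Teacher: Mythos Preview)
Your high-level architecture is right and matches the paper: set $\UU=\EE(M)\cup\AAA(M)$ (your $\UU_1\cup\UU_2$), prove openness as you do, and for density show that any metric not in $\overline{\EE(M)}$ lies in $\overline{\AAA(M)}$. However, you misidentify the content of the Reeb criterion (the paper's Theorem~\ref{t:Reeb}). It is \emph{not} a dichotomy. Its hypotheses are that the closure $\overline{\Per(X_\lambda)}$ is \emph{uniformly} hyperbolic and that the Kupka--Smale transversality holds; its conclusion is simply that the flow is Anosov. The passage from ``every closed geodesic of $g_1$ is hyperbolic'' to ``$\overline{\Per(X^{g_1})}$ is uniformly hyperbolic'' is a separate step, and it is precisely here that the Franks-lemma perturbation you allude to lives: one uses Contreras--Paternain's result that if every metric in a $C^2$-neighborhood of $g_0$ has only hyperbolic closed geodesics, then $\overline{\Per(X^{g_0})}$ is uniformly hyperbolic. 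So the logical flow is: either some nearby metric has an elliptic closed geodesic (and then Franks' lemma puts you in $\EE(M)$), or the whole neighborhood is hyperbolic, whence $\overline{\Per}$ is uniformly hyperbolic, and \emph{then} Theorem~\ref{t:Reeb} applies to a Kupka--Smale metric in that neighborhood to give Anosov. You have the right ingredients but have folded two distinct theorems into one.

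Your speculation about the proof of the Reeb criterion is off in a more substantial way. There are no Ma\~n\'e--Hayashi--Wen closing or connecting arguments, and no ``bifurcate to elliptic'' step inside the criterion. Instead, the paper uses the broken book decompositions of Colin--Dehornoy--Rechtman (which exist for all non-degenerate Reeb flows on closed contact $3$-manifolds, unlike genuine global surfaces of section). One takes a basic set $\Lambda\subset\overline{\Per(X)}$ with infinitely many closed orbits; by Bowen--Ruelle, if the flow is not Anosov then $\Lambda$ has measure zero. On a page of the broken book one builds a small heteroclinic rectangle $D$ for $\Lambda$, picks a connected component $D'$ of $D\setminus(\Ws(\Lambda)\cup\Wu(\Lambda))$, and uses Poincar\'e recurrence to get a return map $\psi$ on part of $D'$. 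The key technical propositions control the diameter of images of arcs under the arrival map between pages: they force $\psi$ to blow up the diameter of a short segment in $D'$ beyond $\diam(D)$, a contradiction. The realisation problem you flag as the main difficulty is thus confined to the Franks-lemma step (already available from Contreras--Paternain) and plays no role in the Reeb criterion itself.
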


On closed surfaces of genus at least two, there are non-empty open sets of Riemannian metrics that do not admit any elliptic closed geodesic. It is the case, for instance, for the Riemannian metrics of negative curvature, which are Anosov. It is well known that closed Riemannian surfaces of genus at most one do not have Anosov geodesic flows. Therefore, for surfaces of lower genus, Theorem~\ref{t:main} reduces to the following statement, which generalizes the above mentioned \cite{Contreras:2004vh}.

\begin{Cor}
\label{c:genus_0_1}
For any closed surface $M$ of genus at most one, there exists a $C^2$-open dense subset $\mathcal{U}$ of the space of smooth Riemannian metrics such that, for all $g\in\mathcal{U}$, the Riemannian surface $(M,g)$ has an elliptic closed geodesic.
\end{Cor}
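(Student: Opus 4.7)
The plan is to deduce Corollary~\ref{c:genus_0_1} directly from Theorem~\ref{t:main} by ruling out the Anosov alternative on closed surfaces of genus at most one. Let $\UU$ be the $C^2$-open dense set of metrics provided by Theorem~\ref{t:main}, and let $g\in\UU$. The dichotomy of Theorem~\ref{t:main} states that either $(M,g)$ admits an elliptic closed geodesic or its geodesic flow is Anosov; it thus suffices to exclude the Anosov case when $M$ is $S^2$, $\R P^2$, $T^2$, or the Klein bottle.

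The key classical input is Klingenberg's theorem: a closed Riemannian manifold whose geodesic flow is Anosov has no conjugate points. I would then split according to the topology of $M$.

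\textbf{Genus zero.} If $M=S^2$ or $M=\R P^2$, then every metric on $M$ has conjugate points. Indeed, otherwise the Cartan--Hadamard theorem applied to the universal cover (which is $S^2$) would force that cover to be diffeomorphic to $\R^2$, contradicting its compactness. So no metric on $M$ can be Anosov.

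\textbf{Genus one.} If $M=T^2$, E.~Hopf's theorem asserts that any metric on $T^2$ without conjugate points is flat. A flat torus has a completely integrable geodesic flow with a foliation by invariant Lagrangian tori, and in particular is not Anosov. Hence no metric on $T^2$ is Anosov. If $M$ is the Klein bottle, the argument reduces to the torus: lifting a putative Anosov metric to the orientation double cover $T^2$ would yield an Anosov geodesic flow there (Anosov hyperbolicity is preserved under finite smooth covers), which is impossible.

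In every case the Anosov alternative is vacuous, so every $g\in\UU$ falls in the first alternative of Theorem~\ref{t:main} and therefore admits an elliptic closed geodesic. The substance of the corollary lies entirely in Theorem~\ref{t:main}; the reduction above is a standard topological--dynamical consequence, so there is no real obstacle to overcome once the main theorem is in hand.
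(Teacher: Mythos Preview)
Your proof is correct. Both your argument and the paper's deduce the corollary from Theorem~\ref{t:main} by showing that no metric on a closed surface of genus at most one can have an Anosov geodesic flow; the difference lies only in how this obstruction is established.

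The paper invokes a single result of Margulis (appendix to Anosov's monograph; see also Plante): an Anosov geodesic flow on a closed manifold forces exponential growth of $\pi_1(M)$, hence genus at least two. This dispatches all four low-genus surfaces in one stroke. Your route instead passes through Klingenberg's theorem (Anosov $\Rightarrow$ no conjugate points) and then branches: Cartan--Hadamard for genus zero, E.~Hopf's rigidity theorem for $T^2$, and a covering argument for the Klein bottle. Your argument is slightly longer and case-by-case, but it has the virtue of relying on classical Riemannian facts rather than the growth-rate machinery; conversely, the Margulis argument is shorter and generalizes immediately to higher dimensions without any case analysis. Either approach is perfectly adequate here.
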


As we anticipated, Theorem~\ref{t:main} has a remarkable application concerning structural stability, which is an utmost important concept in dynamical systems. Let us recall its general notion for discrete dynamical systems. Let $\DD$ be a family of diffeomorphisms of a closed manifold $N$, endowed with a topology. A diffeomorphism $\phi\in\DD$ is \emph{$\DD$-structurally stable} when it has an open neighborhood $\VV\subset\DD$ and, for each diffeomorphism $\psi\in\VV$, there exists a homeomorphism $\kappa:N\to N$ conjugating $\phi$ and $\psi$, i.e.~$\phi=\kappa^{-1}\circ\psi\circ\kappa$. 
If $\DD$ is the space of all smooth diffeomorphisms of $N$, endowed with the $C^k$-topology, this notion is usually called  $C^k$-structural stability.
The celebrated stability conjecture of Palis and Smale \cite{Palis:1970tw} claims that, for $k\geq1$, $C^k$-structurally stable diffeomorphisms of a closed manifold are Axiom~A, meaning that their non-wandering set is hyperbolic and coincides with the closure of the  space of periodic points. For $k=1$, this conjecture was proved by Ma\~n\'e~\cite{Mane:1988vo}.

The analogous notion of structural stability  for geodesic flows of closed Riemannian manifolds is the following. Let $M$ be a closed manifold, and $\GG^k(M)$ be the space of smooth Riemannian metrics on $M$, endowed with the $C^k$ topology. Any Riemannian metric $g$ on $M$ defines a unit tangent bundle $S^gM=\big\{ (x,v)\in TM\ \big|\ \|v\|_g=1 \big\}$ and a geodesic flow $\phi^{g}_t:S^gM\to S^gM$, whose orbits have the form $\phi^g_t(x(0),\dot x(0))=(x(t),\dot x(t))$, where $x:\R\to M$ is a geodesic of $(M,g)$ parametrized with unit speed. The geodesic flow of a Riemannian metric $g_0$ on $M$ is \emph{$C^k$-structurally stable} when $g_0$ has an open neighborhood $\VV\subset\GG^k(M)$ such that, for each $g_1\in\VV$, there exists a homeomorphism $\kappa:S^{g_0}M\to S^{g_1}M$ mapping orbits of $\phi_t^{g_0}$ to orbits of $\phi_t^{g_1}$. We stress that $\kappa$ is not necessarily a conjugacy for the geodesic flows $\phi_t^{g_0}$ and $\phi_t^{g_1}$, since it does not preserve the time-parametrization of the orbits.

Any unit tangent bundle has a canonical volume form, which is preserved by the geodesic flow. Therefore, by Poincare's recurrence theorem, a geodesic flow is Axiom A if and only if it is Anosov.
As a consequence of Theorem~\ref{t:main}, we establish the $C^2$-stability conjecture for Riemannian geodesic flows of closed surfaces.

\begin{MainThm}
\label{t:structural_stability}
The $C^2$-structurally stable geodesic flows of closed Riemannian surfaces are Anosov.
\end{MainThm}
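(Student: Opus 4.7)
The plan is to deduce Theorem~\ref{t:structural_stability} from Theorem~\ref{t:main} by a case analysis. Assume $g_0\in\GG^2(M)$ is $C^2$-structurally stable, and let $\VV$ be a $C^2$-open neighborhood of $g_0$ on which every geodesic flow is topologically orbit-equivalent to $\phi^{g_0}$. Since $\UU$ from Theorem~\ref{t:main} is $C^2$-open and dense, $\VV\cap\UU$ is non-empty; pick any $g_1\in\VV\cap\UU$, so that by Theorem~\ref{t:main} either $\phi^{g_1}$ is Anosov or $(M,g_1)$ admits an elliptic closed geodesic.

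\emph{Anosov case.} If $\phi^{g_1}$ is Anosov, then it is expansive in the Bowen--Walters sense. Because Bowen--Walters expansivity of a flow is preserved under topological orbit equivalence (it is formulated precisely modulo time-reparametrization), $\phi^{g_0}$ is also expansive. A theorem of Paternain--Paternain and Ruggiero then asserts that an expansive geodesic flow of a closed Riemannian surface is Anosov, and we conclude that $\phi^{g_0}$ is Anosov, as desired.

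\emph{Elliptic case.} The strategy is to derive a contradiction by showing that no element of $\VV$ can have an elliptic closed geodesic. Let $\gamma$ be such a closed geodesic of $g_1$, with linearized Poincar\'e return map conjugate to rotation by an angle $2\pi\omega$, $\omega\in(0,\tfrac12)$. By a $C^2$-small perturbation of $g_1$ supported in an arbitrarily small tubular neighborhood of $\gamma$---obtained through explicit control of the coefficients of the Jacobi equation along $\gamma$---the rotation number $\omega$ can be prescribed freely near its original value while simultaneously arranging a non-vanishing Birkhoff twist coefficient at the perturbed closed geodesic. Select two such perturbations $g',g''\in\VV$ whose rotation numbers are respectively Diophantine and rational $p/q$ with $q$ large. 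Moser's twist theorem then furnishes a Cantor family of invariant $2$-tori for $\phi^{g'}$ accumulating on the perturbed closed geodesic, which is in particular Lyapunov stable; in contrast, Birkhoff's subharmonic theorem produces a garland of $2q$ alternating elliptic/hyperbolic $q$-periodic orbits for $\phi^{g''}$, and the closed geodesic of $g''$ fails to be Lyapunov stable. Since Lyapunov stability of a closed orbit, as well as the presence of arbitrarily many closed orbits of bounded multiplicity inside arbitrarily small open sets, are invariants of topological orbit equivalence, $\phi^{g'}$ and $\phi^{g''}$ cannot be orbit equivalent. This contradicts $g',g''\in\VV$.

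\emph{Main obstacle.} The delicate step is the elliptic case. Two technical ingredients must be established: the explicit realization, by $C^2$-small metric perturbations, of both Diophantine and rational regimes for the Floquet rotation number with a non-degenerate Birkhoff twist; and the verification that the contrasting local orbit configurations (KAM islands of invariant tori versus Birkhoff subharmonic garlands) are distinguishable through genuine topological orbit-equivalence invariants. The latter is precisely the kind of fine Reeb-flow analysis announced in the abstract as a sufficient-conditions criterion for Anosov behavior of Reeb flows of closed 3-manifolds, and is expected to be the main technical input of the proof.
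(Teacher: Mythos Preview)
Your Anosov case is a valid alternative to the paper's argument. Once some $g_1\in\VV\cap\UU$ is Anosov, the paper transfers this to $g_0$ by pulling back the density of periodic orbits through the orbit equivalence, obtaining $\overline{\Per(X^{g_0})}=S^{g_0}M$, and then invoking the Contreras--Paternain theorem that $\overline{\Per(X^{g_0})}$ is hyperbolic (available because, by that point in the paper's argument, the elliptic case has already been excluded throughout $\VV$). Your route via Bowen--Walters expansivity and the theorem that expansive geodesic flows of closed surfaces are Anosov is a clean shortcut, and it does not depend on having first ruled out the elliptic case.

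Your elliptic case, however, does not go through. The claim that the closed geodesic of $g''$ ``fails to be Lyapunov stable'' is incorrect. You arranged $g''$ with rational rotation number $p/q$, $q$ large, \emph{and} non-vanishing Birkhoff twist. Since $q>4$ the fixed point is still $4$-elementary, Birkhoff normal form applies, and Moser's twist theorem still produces invariant circles at all nearby radii whose rotation number is Diophantine; these accumulate on the origin, so the fixed point for $g''$ is Lyapunov stable exactly as for $g'$. Likewise, both $g'$ and $g''$ carry Birkhoff subharmonic garlands at every rational level of the varying rotation number, so your second proposed invariant does not separate them either. Finally, identifying this difficulty with the Reeb-flow Anosov criterion of the abstract is a misreading: that criterion (Theorem~\ref{t:Reeb}) is the engine behind Theorem~\ref{t:main} and says nothing about distinguishing local elliptic pictures.

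The paper settles the elliptic case via a rigidity result of Robinson: for a \emph{stable} elliptic fixed point, the Floquet multiplier is a local topological-conjugacy invariant provided the conjugated map has only $4$-elementary fixed points. Using Kupka--Smale and the Franks lemma for geodesic flows of surfaces, one perturbs within $\VV$ first to a metric $g_1$ with a stable elliptic closed geodesic of multiplier $\sigma$, and then to a metric $g_2$ all of whose closed geodesics are $4$-elementary with multipliers $\neq\sigma$; the orbit equivalence between $g_1$ and $g_2$ then induces a local conjugacy of first-return maps that contradicts Robinson's lemma.
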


Before this work, partial results on the stability conjecture for  Riemannian geodesic flows were recently obtained, with different techniques than ours, by Ruggiero and Rifford \cite{Rifford:2021aa, Ruggiero:2021aa}. More precisely, they proved the conjecture under the assumption that the closed Riemannian manifold is without conjugate points, and satisfies one of the following extra assumptions: it is a surface, or a 3-dimensional manifold whose universal cover is quasi-convex and has diverging geodesic rays, or any visibility manifold. For closed Riemannian surfaces without conjugate points, the conjecture also follows from the recent work of Climenhaga, Knieper, and War \cite{Climenhaga:2021aa}. Concerning the  assumption of absence of conjugate points in these works, it is worthwhile to mention a classical result of Ruggiero \cite{Ruggiero:1991aa}: on any closed manifold, the space of Riemannian metrics with Anosov geodesic flows is precisely the interior in the $C^2$ topology of the space of Riemannian metrics without conjugate points.

Another important consequence of Theorem~\ref{t:main}, which for the 2-sphere was already stated in \cite[page~1396]{Contreras:2004vh} is the following negative result.

\begin{MainThm}
\label{t:low_genus}
On a closed surface of genus at most one, there is no Riemannian metric whose geodesic flow is $C^2$-structurally stable.
\end{MainThm}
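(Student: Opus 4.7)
The plan is to deduce Theorem~\ref{t:low_genus} as a direct corollary of Theorem~\ref{t:structural_stability}, combined with the classical fact that no closed Riemannian surface of genus at most one admits an Anosov geodesic flow.

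I would argue by contradiction. Suppose that $g$ is a Riemannian metric on a closed surface $M$ of genus $\leq 1$ whose geodesic flow $\phi^{g}_t$ is $C^2$-structurally stable. Then, by Theorem~\ref{t:structural_stability}, $\phi^{g}_t$ must be Anosov. It thus suffices to verify that $M$ carries no metric with Anosov geodesic flow. For the torus case ($M \cong T^2$), this follows from Hopf's classical theorem: any Riemannian metric on $T^2$ without conjugate points is flat, and the geodesic flow of a flat metric is manifestly not Anosov. Combined with the standard fact that Anosov geodesic flows have no conjugate points (a Jacobi-field argument along hyperbolic orbits), this excludes Anosov metrics on $T^2$. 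For the sphere case ($M \cong S^2$), the absence of conjugate points would force the universal cover of $M$ to be diffeomorphic to $\R^2$ by a Cartan--Hadamard-type lifting argument, contradicting the fact that $S^2$ is simply connected and compact.

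The only substantive ingredient of the argument is Theorem~\ref{t:structural_stability} itself (which in turn rests on Theorem~\ref{t:main}). Once these are granted, the deduction of Theorem~\ref{t:low_genus} reduces to invoking well-known results from classical Riemannian geometry, so I would expect the authors' proof to occupy only a few lines and to present no new obstacle.
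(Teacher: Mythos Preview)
Your argument is correct, but it takes a different route from the paper's own proof. You deduce Theorem~\ref{t:low_genus} from Theorem~\ref{t:structural_stability} together with the classical fact that closed surfaces of genus at most one admit no Anosov geodesic flow (which you justify via the absence of conjugate points and Hopf/Cartan--Hadamard; the paper itself cites Margulis' exponential-growth result in the proof of Corollary~\ref{c:genus_0_1} for the same conclusion). The paper instead bypasses Theorem~\ref{t:structural_stability} entirely and argues directly: by Corollary~\ref{c:genus_0_1}, any metric $g_0$ on a genus $\leq 1$ surface has arbitrarily $C^2$-close metrics with an elliptic closed geodesic, and Lemma~\ref{l:elliptic_prevents_stability} then shows that $g_0$ cannot be $C^2$-structurally stable. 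Your route is slightly more economical once Theorem~\ref{t:structural_stability} is in hand, while the paper's route is more self-contained in that it uses only the earlier building blocks (Corollary~\ref{c:genus_0_1} and Lemma~\ref{l:elliptic_prevents_stability}) rather than the packaged Theorem~\ref{t:structural_stability}; either way the deduction is a few lines.
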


In Theorems~\ref{t:main}, \ref{t:structural_stability}, and \ref{t:low_genus}, it is crucial that we work within the class of Riemannian metrics, and not on the wider class of Finsler metrics. Indeed, for Finsler metrics, all these theorems follow from the celebrated result of Newhouse \cite{Newhouse:1977wn}, which applies to even more general Hamiltonian systems. 

\begin{Thm}[Newhouse]
On a symplectic manifold $(W,\omega)$, any Hamiltonian $H_0:W\to\R$ with a compact level set $H_0^{-1}(1)$ has a $C^2$-open neighborhood $\UU$ and a dense open subset $\VV\subset\UU$ such that any Hamiltonian $H\in\VV$ has either an Anosov Hamiltonian flow on $H^{-1}(1)$ or a  closed orbit on $H^{-1}(1)$ with exactly two Floquet multipliers on the unit circle $S^1\subset\C$. 
\hfill\qed
\end{Thm}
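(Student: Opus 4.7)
The plan is to combine a Mañé-type ergodic closing lemma in the Hamiltonian setting with a symplectic analogue of Franks' perturbation lemma, exploiting the eigenvalue symmetries of $\Sp(2n-2)$ to control the Floquet multipliers along a periodic orbit on the energy level. I first reduce to a periodic orbit problem: after slightly perturbing $H_0$ if necessary, I may assume $H_0^{-1}(1)$ is a compact regular level set, and I take $\UU$ to be a $C^2$-neighborhood on which $H^{-1}(1)$ remains compact and regular. For any $H\in\UU$, the Hamiltonian flow $\phi^H_t$ preserves both $\omega$ and the Liouville volume on $H^{-1}(1)$, and along any periodic orbit its linearized Poincaré return map is an element of $\Sp(2n-2)$.

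Next I would use a dichotomy. Let $\AAA\subset\UU$ be the subset of Hamiltonians whose flow on $H^{-1}(1)$ is Anosov; persistence of hyperbolic splittings gives that $\AAA$ is $C^2$-open. On the complement $\UU\setminus\overline{\AAA}$ the derivative cocycle over $H^{-1}(1)$ fails to admit a uniformly hyperbolic splitting, so a Hamiltonian ergodic closing lemma (a symplectic adaptation of Pugh--Robinson and Mañé, developed by Robinson, Arnaud, Bessa--Rocha, Vivier, and others) produces a $C^1$-small perturbation of $H$ possessing a periodic orbit on $H^{-1}(1)$ whose linearized Poincaré map has a multiplier on $S^1$.

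I would then arrange the multiplier configuration using a symplectic Franks-type lemma. Given the periodic orbit just produced, one can insert a $C^\infty$ Hamiltonian bump supported in a thin tubular neighborhood which modifies the linearized return map while preserving the orbit itself. Since $\Sp(2n-2)$ acts on itself by conjugation with enough flexibility to connect a map having one multiplier on $S^1$ to a nearby map having exactly one conjugate pair $\{e^{i\theta},e^{-i\theta}\}$ of simple multipliers on the unit circle and all other multipliers strictly off it, such a perturbation yields a Hamiltonian $H'$ with a closed orbit having exactly two Floquet multipliers on $S^1$. That condition is $C^2$-open in $H$ by continuity of the multipliers under perturbation, so the set $\BB\subset\UU$ of Hamiltonians enjoying it is open, and I set $\VV:=\AAA\cup\BB$, which is then open and dense.

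The main obstacle is the gap between the natural $C^1$-topology in which Mañé--Franks perturbation techniques operate and the $C^2$-smallness demanded by the statement. Newhouse's argument handles this by concentrating the perturbation over very long pieces of orbit supplied by the ergodic closing lemma: even a $C^2$-tiny modification of $H$, when integrated along a sufficiently long almost-periodic orbit, produces an arbitrary drift in $\Sp(2n-2)$. The quantitative trade-off between the $C^2$-size of the perturbation, the length of the orbit, and the thickness of the tubular neighborhood where the bump is supported is the technical heart of the proof, and together with a preliminary $C^2$-generic Kupka--Smale reduction ensuring non-degeneracy of all short closed orbits, it is what allows the $C^1$-flavoured perturbations to be upgraded to the $C^2$-setting required here.
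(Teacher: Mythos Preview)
The paper does not prove this theorem. It is quoted as a result of Newhouse \cite{Newhouse:1977wn}, stated with a terminal \texttt{\textbackslash qed} box and no argument; the authors merely use it as context to explain why their Riemannian results are not consequences of the general Hamiltonian theory. So there is no ``paper's own proof'' to compare your proposal against.

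As a sketch of Newhouse's actual argument your outline is in the right spirit but somewhat anachronistic and loose. Newhouse's 1977 paper predates Ma\~n\'e's ergodic closing lemma, so the first half of your dichotomy (``if not Anosov, produce by an ergodic closing argument a periodic orbit with a multiplier on $S^1$'') is not how the original proof runs; Newhouse instead works directly with the linearized flow along long orbit segments and shows that if no $C^2$-nearby Hamiltonian has a quasi-elliptic closed orbit, then uniform hyperbolicity of the periodic set can be upgraded to Anosov. The Franks-type perturbation you invoke is closer to what he does, and you correctly identify the crucial technical point: the trade-off between $C^2$-smallness of the bump and the length of orbit over which the symplectic drift is accumulated. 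But the statement ``$\Sp(2n-2)$ acts on itself by conjugation with enough flexibility to connect \ldots'' hides real work: one needs a quantitative lemma about reaching a matrix with exactly one pair of simple unit-modulus eigenvalues by an arbitrarily $C^2$-small perturbation of $H$, and your sketch does not supply that estimate. If you intend to actually reprove the theorem, you should consult Newhouse's original paper rather than assemble post-1977 machinery.
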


We stress that Newhouse's theorem does not imply   our Riemannian ones. A Riemannian metric $g$ on a closed manifold $M$ can be treated as a Hamiltonian $H_g:TM\to\R$, $H_g(x,v)=\|v\|_g^2$. The geodesic flow $\phi^g_t$ is the Hamiltonian flow of $H_g$ on the level set $H_g^{-1}(1)=S^gM$. However, a Hamiltonian $H:TM\to\R$ that is $C^2$-close to $H_g$ is not necessarily the square of a Riemannian norm; in general, $H$ is only the square of a Finsler norm. In other words, working with general Hamiltonians $H$, one is allowed to do local perturbations of the level set $H^{-1}(1)$, whereas if one only works within the class of Hamiltonians of the form $H_g$, a perturbation of the Riemannian metric $g$ near a point $x\in M$ corresponds to a perturbation of $H_g^{-1}(1)$ near the whole fiber $H_g^{-1}(1)\cap T_xM$. 

A crucial ingredient of Newhouse's theorem is the $C^2$-closing lemma for Hamiltonians \cite{Pugh:1983aa, Arnaud:1998aa}: on any symplectic manifold, a $C^2$ generic Hamiltonian $H$ has subset of closed orbits of its Hamiltonian flow $\phi_H^t$ that is dense in the subset of non-wandering points of $\phi_H^t$.
The analogous statement in the Riemannian category is a hard open problem: it is not known whether, for a $C^2$ generic Riemannian metric on a closed manifold of dimension at least two, the closed orbits of the geodesic flow form a dense subset of the unit tangent bundle. At best, a result of Rifford \cite{Rifford:2012aa} asserts that any given orbit of a Riemannian geodesic flow can be closed after a $C^1$-perturbation of the Riemannian metric; however, a $C^1$ perturbation of a Riemannian metric corresponds to a rough $C^0$ perturbation of its geodesic vector field. In higher topology, a result of Irie \cite{Irie:2015aa} implies that a $C^\infty$ generic Riemannian metric on a closed surface $M$ has closed geodesics that form a dense subset of $M$; while this result remarkably involves the $C^\infty$ topology, it is still very far from the closing lemma, which would require the lifts of the closed geodesics to form a dense subset of the unit tangent bundle of $M$. Our approach in this paper follows the one in Contreras and Oliveira's \cite{Contreras:2004vh}: we avoid the closing lemma altogether, and instead combine tools from contact topology, hyperbolic dynamics, and the Brouwer plane translation theorem.

\subsection{Anosov Reeb flows}
In order to prove Theorem~\ref{t:main}, we establish sufficient conditions for a Reeb flow of a closed contact 3-manifold to be Anosov, a statement that may be of independent interest. We recall that, on a 3-dimensional manifold $N$ which for us will always be closed, a contact form $\lambda$ is a 1-form such that $\lambda\wedge d\lambda$ is nowhere vanishing. Associated with a contact form $\lambda$, there is a vector field $X_\lambda$, called the Reeb vector field, uniquely defined by the equations $\lambda(X_\lambda)\equiv1$ and $d\lambda(X_\lambda,\cdot)\equiv0$. The flow $\phi_t:N\to N$ of $X_\lambda$ is called Reeb flow, and its orbits are referred to as Reeb orbits. Unit tangent bundles and their geodesic flows are a particular example of contact manifolds and Reeb flows. We denote by $\Per(X_\lambda)\subset N$ the subspace of those points lying on closed Reeb orbits.
As usual, for a hyperbolic closed Reeb orbit $\gamma\subset\Per(X_\lambda)$, we denote by $\Ws(\gamma)$ and $\Wu(\gamma)$ its stable and unstable manifolds respectively. Our theorem is the following.

\begin{MainThm}
\label{t:Reeb}
Let $(N,\lambda)$ be a closed contact 3-manifold satisfying the following two properties:
\begin{itemize}

\item[$(i)$] The closure $\overline{\Per(X_\lambda)}$ is uniformly hyperbolic.\vspace{3pt}

\item[$(ii)$] The Kupka-Smale transversality condition holds: $\Ws(\gamma_1)\pitchfork\Wu(\gamma_2)$ for all $($not necessarily distinct$)$ closed Reeb orbits $\gamma_1,\gamma_2\subset\Per(X_\lambda)$.

\end{itemize}
Then the Reeb flow of $(N,\lambda)$ is Anosov.
\end{MainThm}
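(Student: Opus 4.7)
The plan is to propagate the hyperbolicity of $\Lambda:=\overline{\Per(X_\lambda)}$ to all of $N$ by exploiting the invariant contact volume form $\lambda\wedge d\lambda$ together with the Kupka-Smale transversality in $(ii)$. Two structural facts set the stage. First, since $\lambda\wedge d\lambda$ is a smooth volume form preserved by $\phi_t$, Poincar\'e recurrence yields that every point of $N$ is nonwandering. Second, hypothesis $(i)$ produces a continuous hyperbolic splitting $TN|_\Lambda = E^s\oplus\langle X_\lambda\rangle\oplus E^u$ with two-dimensional local invariant manifolds, together with a Smale spectral decomposition $\Lambda=\Lambda_1\sqcup\cdots\sqcup\Lambda_k$ into finitely many transitive basic sets (using that $\Lambda$ equals the closure of its periodic orbits by definition).

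\medskip

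The goal is then to show $\Lambda=N$: once this is achieved, the splitting above is automatically a hyperbolic splitting on all of $N$ and $\phi_t$ is Anosov. I would proceed by contradiction: take $p\in N\setminus\Lambda$, and aim to place $p$ on a transverse intersection $\Wu(\Lambda_i)\pitchfork\Ws(\Lambda_j)$ between the global invariant manifolds of two basic sets. This would require establishing that both $\omega(p)$ and $\alpha(p)$ are contained in $\Lambda$. Granting this inclusion, one has $p\in\Ws(\Lambda_j)\cap\Wu(\Lambda_i)$, and the inclination ($\lambda$) lemma combined with hypothesis $(ii)$ upgrades this to a transverse intersection of two-dimensional manifolds inside the three-manifold $N$. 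The classical Birkhoff homoclinic theorem and its heteroclinic extension---transverse intersections of invariant manifolds of basic sets are accumulated by hyperbolic periodic orbits---then force $p\in\overline{\Per(X_\lambda)}=\Lambda$, yielding the desired contradiction.

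\medskip

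The main obstacle I anticipate is establishing $\omega(p)\subset\Lambda$ (and symmetrically $\alpha(p)\subset\Lambda$). Equivalently, one must rule out the existence of a compact invariant chain-transitive set $K\subset N\setminus\Lambda$. This is essentially a closing statement for the fixed Reeb flow $\phi_t$, for which the Asaoka-Irie $C^\infty$ closing lemma is unavailable since we cannot perturb $\lambda$. The approach I would pursue uses the contact structure crucially: construct a local two-dimensional disk $D$ transverse to $X_\lambda$ meeting $K$ at a recurrent point, on which the first-return map of $\phi_t$ is a well-defined area-preserving diffeomorphism with respect to the form induced by $d\lambda$. An area-preserving fixed-point argument of Brouwer-Franks type, applied to the return map on a disk neighbourhood crossing $K$ sufficiently many times by recurrence, should produce a periodic point and hence a periodic orbit of $\phi_t$ inside $K$, contradicting $\Lambda=\overline{\Per(X_\lambda)}$. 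This reduction to a two-dimensional area-preserving fixed-point problem is precisely where the three-dimensional Reeb geometry is indispensable, and it is the step on which the entire argument stands or falls; all remaining ingredients---spectral decomposition, inclination lemma, Birkhoff accumulation---are standard once this chain-transitivity obstruction is removed.
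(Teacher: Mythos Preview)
Your outline correctly identifies the crux---ruling out invariant dynamics away from $\Lambda$---but the mechanism you propose for it does not work. On a \emph{local} transverse disk $D$, the first-return map is defined only on the subset of points whose forward orbit actually comes back to $D$; orbits may exit through the flow-box over $\partial D$ and never re-enter. Brouwer--Franks type theorems require an area-preserving map of an open disk \emph{into itself}, and there is no reason for the domain of the return map to be invariant or for the map to extend continuously across the escape locus. This is precisely why global sections (Birkhoff sections, broken books) matter: the obstruction cannot be removed by a purely local construction, and no amount of recurrence of the chain-transitive set $K$ repairs the partial definition of the return map.

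The paper replaces the local disk by a page $\Sigma$ of a broken book decomposition (Colin--Dehornoy--Rechtman), which gives global control: the only way the arrival map can fail to be defined on a subregion of $\Sigma$ is along $\Ws(\Kbr)$. One then works inside a small heteroclinic rectangle $D\subset\Sigma$ of a basic set $\Lambda$ containing infinitely many closed orbits, removes the set $(\Ws(\Lambda)\cup\Wu(\Lambda))\cap D$ (shown to be compact and path-connected in Lemma~\ref{l:Ws_Wu_closed}, itself a nontrivial application of the broken-book estimates), and picks a complementary open disk $D'$. Poincar\'e recurrence provides a point of $D'$ returning to $D'$; if the arrival map were defined on all of $D'$ one would indeed conclude by Brouwer, so the real content lies in the other branch: along a path in $D'$ reaching the boundary of the domain of the arrival map, the hitting time blows up, the endpoint is forced into $\Ws(\Kbr)$ (Proposition~\ref{p:unbounded_Ws}), and a diameter estimate (Proposition~\ref{p:large_return}, proved via the $\lambda$-lemma and a flux computation) forces the image of the path to be larger than $D$ itself---a contradiction. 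Two smaller issues in your sketch: hypothesis~(ii) only asserts transversality for invariant manifolds of \emph{closed orbits}, not for arbitrary leaves of $\Ws(\Lambda_j)$ and $\Wu(\Lambda_i)$ at a general $p$ (the paper needs the separate Lemma~\ref{l:Ws_cap_Wu} for this); and a one-way transverse heteroclinic from $\Lambda_i$ to $\Lambda_j$ is not accumulated by periodic orbits unless a return connection from $\Lambda_j$ to $\Lambda_i$ also exists, so the Birkhoff accumulation step is incomplete as stated.
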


The arguments in \cite{Contreras:2004vh} relied on Hofer-Wysocki-Zehnder's theory of finite energy foliations \cite{Hofer:1998vy, Hofer:2003wf}, which in turn is based on the theory of pseudo-holomorphic curves from symplectic geometry \cite{Gromov:1985ww, Hofer:2002vt}. The relevant property of finite energy foliations for studying the problems described above is the fact that each of their leaves is a surface of section for the geodesic flow, although not necessarily a section with a well defined first return map. 

Finite energy foliations are only available for Reeb flows of spheres and real projective spaces in dimension 3. For our Theorem~\ref{t:Reeb}, we instead employ the foliations provided by the so-called \emph{broken book decompositions}, recently introduced by Colin, Dehornoy, and Rechtman \cite{Colin:2020tl}, which are available for general non-degenerate Reeb flows on closed contact 3-manifolds.

After this work was completed, we learnt that Schulz very recently proved a partial version of our Theorem~\ref{t:main} in his Ph.D. thesis \cite{Schulz:2021wk}. His result establishes Theorem~\ref{t:main} under a technical extra assumption: the existence of finitely many simple closed geodesics $\gamma_1,...,\gamma_n$ such that every $\gamma_i$ intersects another $\gamma_j$, the complement $M\setminus(\gamma_1\cup...\cup\gamma_n)$ is simply connected, and any other contractible closed geodesic intersects at least one $\gamma_i$. These assumptions guarantee the existence of a suitable system of surfaces of section of annulus type for the geodesic flow.

\subsection{Organization of the paper}
In Section~\ref{s:sos}, after introducing some background on Reeb dynamics and broken book decompositions, we prove two technical statements (Propositions~\ref{p:large_return} and~\ref{p:unbounded_Ws}) concerning the diameter of the images of suitable paths under the arrival map between two pages of a broken book. In Section~\ref{s:hyperbolic}, we briefly recall the needed background from hyperbolic dynamics, and we prove a technical statement (Lemma~\ref{l:Ws_Wu_closed}) concerning heteroclinic rectangles of suitable Reeb flows. Section~\ref{s:proofs} is devoted to the proofs of our main theorems: in Section~\ref{ss:proof_Reeb} we prove Theorem~\ref{t:Reeb}; in Section~\ref{ss:proof_geodesic_flows} we prove Theorem~\ref{t:main}, Corollary~\ref{c:genus_0_1}, and Theorems~\ref{t:structural_stability} and~\ref{t:low_genus}. 

\subsection{Acknowledgements}
Marco Mazzucchelli is grateful to Pierre Dehornoy for a few discussions concerning broken book decompositions. Both authors are grateful to the anonymous referees for their careful reading of the manuscript, and their helpful reports.

\section{Surfaces of section in contact 3-manifolds}\label{s:sos}

\subsection{Surfaces of section}
Let $(N,\lambda)$ be a closed connected contact 3-manifold, with associated Reeb vector field $X=X_\lambda$ and Reeb flow $\phi_t:N\to N$. We recall that the Reeb flow preserves the contact form, i.e.\ $\phi_t^*\lambda=\lambda$.
A \emph{surface of section} for the Reeb vector field $X$ is a compact, orientable, immersed surface $\Sigma\looparrowright N$ satisfying the following two properties:
\begin{itemize}
\item (\emph{Boundary}) Every connected component of the boundary $\partial\Sigma$ is the covering map of a closed Reeb orbit, that is, a closed orbit of $X$.

\item (\emph{Transversality}) The interior $\interior(\Sigma)$ is embedded in $N\setminus\partial\Sigma$ and transverse to the Reeb vector field~$X$.
\end{itemize}
The transversality assumption implies that $d\lambda$ restricts to an area form on $\interior(\Sigma)$, and we orient $\Sigma$ so that such an area form is positive. By Stokes' theorem, $\Sigma$ must have non-empty boundary.
We remark that, in the literature, the notion of surface of section is sometimes different  than the one employed here: for instance, $\Sigma$ may be required to be embedded in $N$, and the induced orientation on the boundary $\partial\Sigma$ may be required to coincide with the orientation given by the Reeb vector field $X|_{\partial\Sigma}$.

Following the classical approach pioneered by Poincar\'e and Birkhoff, surfaces of sections can be employed in order to reduce the study of a 3-dimensional flow to the study of a surface diffeomorphism; this requires one further condition, beyond the already mentioned transversality and boundary properties. A surface of section $\Sigma$ is called a \emph{Birkhoff section} when it satisfies the following extra property:
\begin{itemize}
\item (\emph{Globality}) Every Reeb orbit intersects $\Sigma$ in uniformly bounded positive and negative time, i.e.\ there exists $T>0$ such that, for every $z\in N$, there exist $t_1\in[-T,0)$ and $t_2\in(0,T]$ such that $\phi_{t_1}(z)\in\Sigma$ and $\phi_{t_2}(z)\in\Sigma$.
\end{itemize}
The first return time to a Birkhoff section $\Sigma$ is the smooth function
\begin{align*}
\tau:\interior(\Sigma)\to(0,\infty),\qquad\tau(z)=\min\big\{t>0\ \big|\ \phi_t(z)\in\Sigma\big\},
\end{align*}
and the first return map (also called the Poincar\'e map) is the diffeomorphism
\begin{align*}
 \psi:\interior(\Sigma)\to\interior(\Sigma),\qquad \psi(z)=\phi_{\tau(z)}(z).
\end{align*}
Such a diffeomorphism is area preserving, i.e.\ $\psi^*d\lambda=d\lambda$.

\subsection{Non-degenerate closed Reeb orbits}
\label{ss:non_degenerate}
Let $\gamma(t)=\phi_t(z)$ be a closed Reeb orbit of minimal period $t_0>0$. The restriction $d\phi_{t_0}(z)|_{\ker(\lambda_z)}$ is a linear symplectomorphism with respect to the symplectic form $d\lambda_z|_{\ker(\lambda_z)}$. Its eigenvalues, which must be of the form 
$\sigma,\sigma^{-1}\in S^1\cup\R\setminus\{0\}$, are called the Floquet multipliers of $\gamma$. Here, $S^1$ denotes the unit circle in the complex plane $\C$. The closed Reeb orbit $\gamma$ is called:
\begin{itemize}
 \item \emph{elliptic} when $\sigma\in S^1$;
 \item \emph{positively hyperbolic} when $\sigma\in (0,1)\cup(1,\infty)$;
 \item \emph{negatively hyperbolic} when $\sigma\in (-\infty,-1)\cup(-1,0)$;
 \item \emph{non-degenerate} when $\sigma\neq1$.
\end{itemize}
The contact form $\lambda$ (or its Reeb vector field $X$) is \emph{non-degenerate} when all the closed Reeb orbits are non-degenerate for all possible periods, i.e.
\begin{align}
\label{e:strong_nondegeneracy}
 \ker(d\phi_{t}(z)-I)=\mathrm{span}\{X(z)\},\qquad\forall t>0,\ z\in\mathrm{fix}(\phi_{t}).
\end{align}

Let $\gamma(t)=\phi_t(z)$ be a hyperbolic closed Reeb orbit of minimal period $t_0>0$ and Floquet multipliers $\sigma,\sigma^{-1}\in\R$, with $0<|\sigma|<1$. Along $\gamma$, the tangent bundle of our contact manifold splits as 
$TN|_{\gamma}=\Es\oplus\Eu\oplus\mathrm{span}\{X\}$,
where
\begin{align*}
\Es(z)=\ker(d\phi_{t_0}(z)-\sigma I),\qquad
\Eu(z)=\ker(d\phi_{t_0}(z)-\sigma^{-1} I)
\end{align*}
are called the \emph{stable} and \emph{unstable bundles} of $\gamma$ respectively. These vector bundles are invariant by the linearized Reeb flow, i.e.~
$d\phi_t(z)\Es(z)=\Es(\phi_t(z))$ and $d\phi_t(z)\Eu(z)=\Eu(\phi_t(z))$.
Let $d:N\times N\to[0,\infty)$ be a distance on $N$ induced by an arbitrary Riemannian metric. For each point $z\in\gamma$, the spaces
\begin{align*}
\Ws(z)&=\Big\{ z'\in N\ \Big|\ \lim_{t\to\infty}d(\phi_t(z'),\phi_t(z))=0 \Big\},\\
\Wu(z)&=\Big\{ z'\in N\ \Big|\ \lim_{t\to-\infty}d(\phi_t(z'),\phi_t(z))=0 \Big\}
\end{align*}
turn out to be smooth 1-dimensional injectively immersed submanifolds of $N$ containing the point $z$ in their interior, and with tangent spaces $T_z\Ws(z)=\Es(z)$ and  $T_z\Wu(z)=\Eu(z)$. 
The \emph{stable} and \emph{unstable manifolds} of $\gamma$ are defined respectively as
\begin{align*}
\Ws(\gamma)=\bigcup_{z\in\gamma} \Ws(z),\qquad
\Wu(\gamma)=\bigcup_{z\in\gamma} \Wu(z).
\end{align*}
These are 2-dimensional injectively immersed submanifolds of $N$ invariant by the Reeb flow, with tangent spaces at every $z\in\gamma$ given by
\begin{align*}
T_z \Ws(\gamma) = \Es(z)\oplus\mathrm{span}\big\{X(z)\big\},\qquad
T_z \Wu(\gamma) = \Eu(z)\oplus\mathrm{span}\big\{X(z)\big\}.
\end{align*}
If $\gamma$ is positively hyperbolic (i.e.~$\sigma>0$), then $\Es$ and $\Eu$ are orientable line bundles over $\gamma$; in this case $\Ws(\gamma)$ and $\Wu(\gamma)$ are diffeomorphic to open annuli $A=S^1\times\R$, with a diffeomorphism that sends $\gamma$ to $S^1\times\{0\}\subset A$. If instead $\gamma$ is negatively hyperbolic (i.e.~$\sigma<0$), then $\Es$ and $\Eu$ are non-orientable line bundles over $\gamma$, and $\Ws(\gamma)$ and $\Wu(\gamma)$ are diffeomorphic to open Mobius bands $M=[0,1]\times\R/\sim$, where $(0,y)\sim(1,-y)$, with a diffeomorphism that sends $\gamma$ to $S^1\times\{0\}\subset M$.

\subsection{Open/broken book decompositions}
\label{ss:broken_book_decomposition}
A Birkhoff section $\Sigma$ whose first return map $\tau:\interior(\Sigma)\to(0,\infty)$ extends smoothly to the boundary $\partial\Sigma$ defines a so-called \emph{rational open book decomposition} of the contact manifold $(N,\lambda)$. Namely, for each $s\in[0,1]$, we set
$\Sigma_s:=\big\{\phi_{s\tau(z)}(z)\ \big|\ z\in\Sigma\big\}.$
Notice that $\Sigma=\Sigma_0=\Sigma_1$, and each $\Sigma_s$ is a Birkhoff section with the same boundary $K:=\partial\Sigma_s=\partial\Sigma$. The collection of interiors $\interior(\Sigma_s)$ foliates the complement $N\setminus K$. Suggestively, the $\Sigma_s$ are called the pages and the boundary $K$ is called the binding of the rational open book.

It is not known whether every Reeb vector field on a closed contact 3-manifold admits a Birkhoff section\footnote{A few months after this work was completed, the authors established the existence of Birkhoff sections for all Reeb flows on 3-dimensional closed manifolds whose closed orbits are non-degenerate and whose hyperbolic closed orbits satisfy the Kupka-Smale transversality condition \cite{Contreras:2022aa}; the proof crucially requires, among other ingredients, the results of Section~\ref{ss:size_arrival}. Independently, Colin, Dehornoy, Hryniewicz, and Rechtman \cite{Colin:2022aa} established a related alternative result: the existence of Birkhoff sections for all Reeb flows on 3-dimensional closed manifolds whose closed orbits are non-degenerate and equidistributed, i.e.~the volume form of the contact manifold can be approximated as a measure by a sequence of measures supported on finite collections of closed orbits.} (and thus an associated rational open book decomposition). 
Nevertheless, recent work of Colin, Dehornoy, and Rechtman \cite[Th.~1.1]{Colin:2020tl} implies that any such Reeb vector field admits the following weaker kind of decomposition provided it is non-degenerate. A \emph{broken book decomposition} of the non-degenerate closed contact 3-manifold $(N,\lambda)$ is given by the following data:
\begin{itemize}
\item The \emph{radial binding} $\Krad\subset N$, which is the union of finitely many closed Reeb orbits, each one allowed to be either elliptic or hyperbolic.

\item The \emph{broken binding} $\Kbr\subset N\setminus\Krad$, which is the union of finitely many hyperbolic closed Reeb orbits.

\item A family $\FF$ of closed surfaces with boundary immersed in $N$, called the \emph{pages} of the broken book.

\end{itemize}
The disjoint union $K:=\Krad\cup\Kbr$ is called the \emph{binding} of the broken book. These data satisfy the following properties. 
\begin{itemize}

\item (\emph{Pages}) For each page $\Sigma\in\FF$, the interior $\interior(\Sigma)$ is properly embedded in $N\setminus K$ and transverse to the Reeb vector field $X$, while each connected component of the boundary $\partial\Sigma$ is a covering map of a closed Reeb orbit in the binding $K$.\vspace{5pt}

\item (\emph{Foliation}) The family of interiors $\interior(\FF):=\{\interior(\Sigma)\ |\ \Sigma\in\FF\}$ is a cooriented foliation of the complement of the binding $N\setminus K$.\vspace{5pt}

\item (\emph{Binding}) Near each binding closed Reeb orbit $\gamma\subset K$, let $D\subset N$ be an embedded open two-dimensional disk intersecting $\gamma$ transversely in a single point $z$, and small enough so that it also intersects transversely the interior of each page $\interior(\Sigma)\in\interior(\FF)$ with $\gamma\subset\partial\Sigma$. Consider the restriction  
\[\FF|_D=\{\Sigma\cap D\ |\ \Sigma\in\FF\}.\]
If $\gamma\subset\Krad$, the connected components of the leaves of the singular foliation $\FF|_D$ are radial as in Figure~\ref{f:broken}(a). If instead $\gamma\in\Kbr$, they are radial in four sectors, and hyperbolic in the four sectors separating the previous ones, as in Figure~\ref{f:broken}(b).\vspace{5pt}

\item (\emph{Return time near the radial binding}) There exist an open neighborhood $V\subset N$ of the radial binding and a finite constant $T>0$ such that, for each page $\Sigma\in\FF$ and each point $z\in\Sigma\cap V$, we have $\phi_t(z)\in\Sigma$ for some $t\in(0,T)$.\vspace{5pt}

\item (\emph{Rigid pages}) There exist finitely many pages $\Sigma_1,...,\Sigma_h\in\FF$ such that every Reeb orbit intersects at least one of these pages. If a Reeb orbit $t\mapsto\phi_t(z)$ does not intersect $\interior(\Sigma_1)\cup...\cup\interior(\Sigma_h)$ for arbitrarily large positive times $t>0$, then the $\omega$-limit of such Reeb orbit is some broken boundary component $\gamma_+\in\Kbr$. Analogously, if $t\mapsto\phi_t(z)$ does not intersect $\interior(\Sigma_1)\cup...\cup\interior(\Sigma_h)$ for arbitrarily large negative times $t<0$, then the $\alpha$-limit of such Reeb orbit is some broken boundary component $\gamma_-\in\Kbr$.

\end{itemize}

\begin{figure}
\begin{footnotesize}
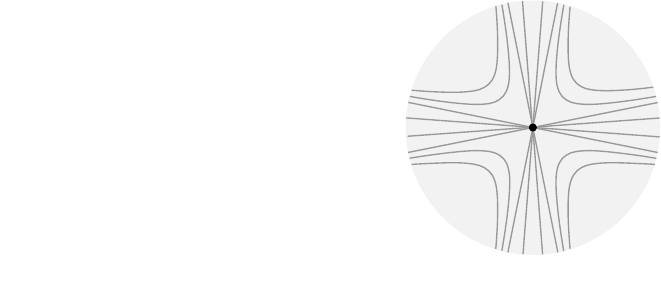
\end{footnotesize}
\caption{\textbf{(a)}~Foliation $\FF|_D$ around a point $z$ on a radial binding component. \textbf{(b)}~Foliation $\FF|_D$ around a point $z$ on a broken binding component. In both pictures, the arrows are positively tangent to a projection of the Reeb vector field $X$ to $D$.}
\label{f:broken}
\end{figure}

Notice that any page $\Sigma\in\FF$ is a surface of section, although not necessarily a Birkhoff section. Nevertheless, we can still define the return map to a given page, except that such map may not be defined on the whole page. More generally, given two (not necessarily distinct) pages $\Sigma_0,\Sigma_1\in\FF$ and a point $z_0\in\interior(\Sigma_0)$ such that $\phi_{t_0}(z_0)\in\Sigma_1$ for some $t_0\in\R\setminus\{0\}$, by the implicit function theorem there exists a maximal open neighborhood $U\subset\interior(\Sigma_0)$ of $z_0$ and a smooth function
$\tau:U\to\R\setminus\{0\}$
such that $\tau(z_0)=t_0$ and $\phi_{\tau(z)}(z)\in\Sigma_1$ for all $z\in U$. The smooth map
$\psi:U\to\Sigma_1$, $\psi(z)=\phi_{\tau(z)}(z)$
is a diffeomorphism onto its image that preserves the area form $d\lambda$, i.e.~$\psi^*(d\lambda|_{\Sigma_1})=d\lambda|_U$.

\subsection{Size of images of the arrival map}
\label{ss:size_arrival}

The following lemmas are among the main ingredients in the proof of Theorem~\ref{t:Reeb}. The arguments are reminiscent of those in Contreras-Oliveira \cite[Sect.~2.4]{Contreras:2004vh}.

\begin{Prop}
\label{p:unbounded_Ws}
Let $(N,\lambda)$ be a closed contact 3-manifold with a non-degenerate Reeb vector field, endowed with a broken book decomposition with binding $K=\Krad\cup\Kbr$. We consider two pages $\Sigma_0,\Sigma_1\subset N$, and the stable manifold
\begin{align*}
\Ws(\Kbr)=\bigcup_{z\in \Kbr} \Ws(z).
\end{align*}
Let $\zeta:[0,1]\to\interior(\Sigma_0)$ be a smooth path, and $\tau:[0,1)\to(0,\infty)$ a smooth unbounded function such that $\phi_{\tau(s)}(\zeta(s))\in\interior(\Sigma_1)$ for all $s\in[0,1)$. Then $\tau(s)\to+\infty$ as $s\to1$, and $\zeta(1)\in\Ws(\Kbr)$.
\end{Prop}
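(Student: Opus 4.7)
The plan is to deduce the two conclusions in sequence: first $\tau(s)\to+\infty$ by a lifting argument, then $\zeta(1)\in\Ws(\Kbr)$ by showing $\omega(\zeta(1))\subset\Kbr$ via weak globality.

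For $\tau(s)\to+\infty$, I would argue by contradiction: suppose $\tau(s_n)\to T<\infty$ along some subsequence $s_n\to 1^-$. By continuity of the flow and compactness of $\Sigma_1$, $\phi_T(\zeta(1))\in\Sigma_1$, and since $\zeta(1)\in\interior(\Sigma_0)$ avoids the binding $K$ its forward orbit likewise avoids $K$ at all finite times, so $\phi_T(\zeta(1))\in\interior(\Sigma_1)$ where $X_\lambda$ is transverse to $\Sigma_1$. The implicit function theorem then produces a smooth arrival function $\tilde\tau$ on a neighborhood $U$ of $\zeta(1)$ in $\interior(\Sigma_0)$, with $\tilde\tau(\zeta(1))=T$ and $\phi_{\tilde\tau(z)}(z)\in\Sigma_1$ for every $z\in U$. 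More globally, $\mathcal{X}=\{(z,t)\in\interior(\Sigma_0)\times(0,\infty):\phi_t(z)\in\Sigma_1\}$ is a smooth $2$-manifold whose projection $(z,t)\mapsto z$ is a local diffeomorphism onto its image, and the smooth curve $s\mapsto(\zeta(s),\tau(s))$ is a lift of $\zeta|_{[0,1)}$ into $\mathcal{X}$. Uniqueness of path-lifting forces this lift to extend continuously at $s=1$ with value $(\zeta(1),T)$; hence $\tau$ would be bounded on $[0,1)$, contradicting the hypothesis.

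Next, I would prove $\omega(\zeta(1))\subset\Kbr$. Denoting the weak-globality generating pages of the broken book by $P_1,\ldots,P_h$ (to avoid the clash with the notation $\Sigma_1$ of the proposition), the contrapositive of weak globality asserts that if $\omega(\zeta(1))\not\subset\Kbr$ then the forward orbit of $\zeta(1)$ meets some $\interior(P_j)$ at times $t_n\to\infty$, after extracting in $j$. The implicit function theorem again yields smooth arrival-to-$P_j$ functions near $\zeta(1)$ with values close to $t_n$. Invoking Proposition~\ref{p:large_return} — which should provide, for any preassigned neighborhood $V$ of $\Kbr$, a uniform upper bound $T_*=T_*(V)$ on the time for any point of $P_j\setminus V$ to reach $\Sigma_1$ — and passing to a subsequence with $\phi_{t_n}(\zeta(1))\notin V$, one concludes that for $s$ close enough to $1$, $\tau(s)\le t_n+T_*+1$, contradicting $\tau(s)\to+\infty$. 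Having $\omega(\zeta(1))\subset\Kbr$, the conclusion follows: $\omega$-limit sets of flows are compact, invariant and connected, while $\Kbr$ is a disjoint union of finitely many hyperbolic closed orbits, so $\omega(\zeta(1))$ coincides with a single closed orbit $\gamma\subset\Kbr$, which gives $\zeta(1)\in\Ws(\gamma)\subset\Ws(\Kbr)$.

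The main obstacle I anticipate is the subsequence-selection step in the previous paragraph: choosing visits $\phi_{t_n}(\zeta(1))\in\interior(P_j)$ bounded away from $\Kbr$ so that Proposition~\ref{p:large_return} actually applies. If every tail of visits instead collapses onto $\Kbr$, then the orbit of $\zeta(1)$ already accumulates on $\Kbr$ and the desired conclusion follows more directly; the two cases may need to be treated separately. The hyperbolic sectors around broken binding components depicted in Figure~\ref{f:broken}(b) are precisely the source of unbounded arrival times, and they are ultimately what channels $\zeta(1)$ into $\Ws(\Kbr)$.
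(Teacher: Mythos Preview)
Your first step, showing $\tau(s)\to+\infty$, is essentially correct, though the phrase ``uniqueness of path-lifting'' is slightly loose since a local diffeomorphism need not have the path-lifting property. What actually works is the open--closed argument you implicitly have in mind: once $\tau(s_n)$ is close to $T$ for large $n$, the implicit function theorem gives a unique local branch $\tilde\tau$ near $\zeta(1)$, the set $\{s:\tau(s)=\tilde\tau(\zeta(s))\}$ is open and closed in a terminal interval $[s_*,1)$, and it is nonempty, forcing $\tau$ to be bounded there. This matches in spirit what the paper does.

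The second step, however, has a genuine gap. You invoke Proposition~\ref{p:large_return} as though it supplied a uniform bound $T_*(V)$ on the time needed for points of $P_j\setminus V$ to reach $\Sigma_1$. That is not what Proposition~\ref{p:large_return} says: it bounds below the \emph{diameter} of the image of a path under the arrival map, not the arrival time. In fact no such uniform time bound can exist, since a point of $P_j\cap\Ws(\Kbr)$ lying outside $V$ may take arbitrarily long (or fail) to reach $\Sigma_1$. Worse, Proposition~\ref{p:large_return} requires the transversality hypothesis $\Ws(\gamma_1)\pitchfork\Wu(\gamma_2)$, which is not assumed in Proposition~\ref{p:unbounded_Ws}, and its proof actually \emph{uses} Proposition~\ref{p:unbounded_Ws}, so your invocation would be circular. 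Finally, even granting such a bound, you would only exhibit \emph{some} bounded arrival time for $\zeta(s)$; since $\tau$ is merely one smooth branch among many, this does not by itself contradict $\tau(s)\to\infty$ without a further branch-identification argument.

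The paper proceeds differently. It introduces the compact $1$-parameter family of pages $\Sigma_r$, $r\in[0,1]$, interpolating $\Sigma_0$ and $\Sigma_1$ along the orbit of $\zeta(0)$, and for each $z$ tracks the monotone ``page-time'' function $r\mapsto T(z,r)$. One shows that at $z_*=\zeta(1)$ this function blows up at some $r_{z_*}\in(0,1]$, and then applies Lemma~\ref{l:unstable_Ws}: if $T_z(r)\to\infty$ as $r\to r_z$, then $z\in\Ws(\Kbr)$. The proof of that lemma exploits the local structure of the broken book near both kinds of binding (the bounded return time near $\Krad$, and the fact that only finitely many $r$-values can correspond to a given page), rather than weak globality alone. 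This page-family machinery is the missing ingredient in your approach.
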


\begin{Prop}
\label{p:large_return}
Let $(N,\lambda)$ be a closed contact 3-manifold with a non-degenerate Reeb vector field, endowed with a broken book decomposition with binding $K=\Krad\cup\Kbr$, and such that $\Ws(\gamma_1)\pitchfork\Wu(\gamma_2)$ for all broken binding components $\gamma_1,\gamma_2\subset \Kbr$.
For all pages $\Sigma_0,\Sigma_1\subset N$ of the broken book, there exists a constant
\[c=c(\Sigma_0,\Sigma_1)>0\] 
with the following property. Let $\zeta_0:[0,1]\to\interior(\Sigma_0)$ be any smooth path such that:
\begin{itemize}

\item[$(i)$] $\zeta_0(1)$ is a transverse intersection of $\zeta_0$ with the stable manifold $\Ws(\Kbr)$,

\item[$(ii)$] there exists a smooth unbounded function $\tau:[0,1)\to(0,\infty)$ such that
 \[\zeta_1(s):=\phi_{\tau(s)}(\zeta_0(s))\in\interior(\Sigma_1),\qquad\forall s\in[0,1).\]

\end{itemize}
Then
\[\diam\big(\zeta_1([0,1))\big)> c.\] 
Indeed, $\zeta_1$ accumulates on a smooth embedded circle $S\subset\Wu(\Kbr)\cap\Sigma_1$, i.e.
\[S\subset\overline{\zeta_1([0,1))}.\]
\end{Prop}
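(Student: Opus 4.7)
By Proposition~\ref{p:unbounded_Ws} we have $\tau(s)\to+\infty$ as $s\to 1$ and $\zeta_0(1)\in\Ws(\gamma)$ for some broken binding orbit $\gamma\subset\Kbr$, and hypothesis~(i) ensures that $\zeta_0$ meets $\Ws(\gamma)$ transversely at $s=1$. My plan is to exploit the $\lambda$-Lemma for hyperbolic fixed points of diffeomorphisms, applied to the first-return Poincar\'e map $P$ of the Reeb flow on a small 2-disk $D\subset N$ transverse to $X_\lambda$ at a point $p\in\gamma$. Picking $T_\star>0$ with $\phi_{T_\star}(\zeta_0(1))\in D$ (such $T_\star$ exists because the forward orbit of $\zeta_0(1)$ has $\gamma$ as its $\omega$-limit) and setting $\xi(s):=\phi_{T_\star}(\zeta_0(s))$ for $s$ near $1$, the curve $\xi$ is transverse in $D$ to the local stable manifold $W^s_{\mathrm{loc}}(p)\subset\Ws(\gamma)$ of the hyperbolic fixed point $p$ of $P$. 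The classical $\lambda$-Lemma then produces, for any compact sub-arc $\alpha\subset W^u_{\mathrm{loc}}(p)\subset\Wu(\gamma)$ and any $\varepsilon>0$, a threshold $n_\star$ and sub-intervals $I_n\subset\mathrm{dom}(\xi)$ accumulating at $1$ such that $P^n(\xi|_{I_n})$ is $C^1$-$\varepsilon$-close to $\alpha$ for every $n\geq n_\star$.

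Next I would pass from the local disk $D$ to the page $\Sigma_1$ via a first-hit map along the Reeb flow. The forward orbits of points on $W^u_{\mathrm{loc}}(p)$ remain on $\Wu(\gamma)$; using the Kupka-Smale hypothesis $\Ws(\gamma')\pitchfork\Wu(\gamma)$ for every $\gamma'\subset\Kbr$, the set of points in $W^u_{\mathrm{loc}}(p)$ whose forward orbit has $\omega$-limit in $\Kbr$ is a discrete subset of $W^u_{\mathrm{loc}}(p)$, being contained in the transverse 1-dimensional intersection $\Wu(\gamma)\cap\Ws(\Kbr)$, which decomposes into a discrete union of heteroclinic orbits. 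Combined with the weak globality of the broken book, this yields a smooth first-hit map $h$ from an open dense subset of $W^u_{\mathrm{loc}}(p)$ into $\Wu(\gamma)\cap\Sigma_1$. Writing $P^n(\xi(s))=\phi_{T_\star+T_n(s)}(\zeta_0(s))$ and flowing further by a bounded additional time to reach $\Sigma_1$, the $\lambda$-Lemma estimate translates into the assertion that the image $\zeta_1(I_n)$ is $C^1$-close to the arc $h(P^n(\xi|_{I_n}))\subset\Wu(\gamma)\cap\Sigma_1$ for large $n$.

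To extract the embedded circle $S\subset\Wu(\Kbr)\cap\Sigma_1$, I would use that iterating $P$ on $D$ corresponds to wrapping once around the closed orbit $\gamma$: as $n\to\infty$, the arcs $h(P^n(\xi|_{I_n}))$ sweep along $\Wu(\gamma)\cap\Sigma_1$ while wrapping around $\gamma$, and their accumulation closes up into a smooth embedded circle. Concretely, if a component of $\partial\Sigma_1$ is a covering of $\gamma$, then $S=\gamma\subset\Wu(\gamma)\cap\Sigma_1$; otherwise, a sufficiently small tubular neighborhood $T$ of $\gamma$ is disjoint from $\Sigma_1$, the set $\Wu(\gamma)\cap\partial T$ is an embedded circle of ``exit points'', and following the first-hit map to $\Sigma_1$ yields the embedded circle $S$. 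In either case $S\subset\overline{\zeta_1([0,1))}$. Finally, since $\Kbr$ is a finite union of compact closed Reeb orbits with positive minimal length, the set of possible circles $S$ is finite, and its minimum diameter provides the universal constant $c=c(N,\Sigma_0,\Sigma_1)>0$ satisfying $\diam(\zeta_1([0,1)))\geq\diam(S)\geq c$.

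The main obstacle is this last step: establishing that $\overline{\zeta_1([0,1))}$ contains a genuine smooth embedded \emph{circle} and not merely an arc. The $\lambda$-Lemma produces $C^1$-accumulation on arcs of $\Wu(\gamma)\cap\Sigma_1$, but closing these arcs into a smooth loop requires exploiting the closed-orbit nature of $\gamma$, the transversality assumptions with $\Ws(\gamma')$ for every $\gamma'\subset\Kbr$, and a careful use of the weak globality of the broken book, which ensures that generic orbits on $\Wu(\gamma)$ do hit the distinguished pages $\Sigma_1\cup\cdots\cup\Sigma_h$ rather than being trapped in stable manifolds of $\Kbr$.
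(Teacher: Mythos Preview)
Your use of the $\lambda$-lemma to show that the flow-out of $\zeta_0$ accumulates on a branch of $\Wu_\epsilon(\gamma)\setminus\gamma$ is correct and is exactly how the paper begins. The real gap is in the last step you yourself flag, and your proposed fix does not work.

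Your two-case construction of the circle $S$ is faulty. If $\gamma\subset\partial\Sigma_1$, setting $S=\gamma$ is meaningless: $\zeta_1$ takes values in $\interior(\Sigma_1)$, and a circle in $\partial\Sigma_1$ is not contained in $\overline{\zeta_1([0,1))}\cap\interior(\Sigma_1)$ in any useful sense. In the complementary case, the exit circle $\Wu(\gamma)\cap\partial T$ lives on $\partial T$, not on $\Sigma_1$; to push it to $\Sigma_1$ you must flow forward, and then you need precisely the statement you have not proved: that the first-hit time to $\Sigma_1$ is \emph{uniformly bounded} on that circle. Your observation that $\Wu(\gamma)\cap\Ws(\Kbr)$ is discrete (by transversality) only tells you the first-hit map is defined on a cofinite subset; it says nothing about boundedness of the hit time near the missing points, so the image need not extend to a closed curve. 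This is exactly where the argument could break, and it is where the Kupka-Smale hypothesis has to be used in a sharper way.

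The paper's remedy is to \emph{first} find the circle on an intermediate page $\Sigma_{r_0}$ chosen close to $\gamma$, so that $\interior(\Sigma_{r_0})$ cuts a branch $W_\epsilon\subset\Wu_\epsilon(\gamma)\setminus\gamma$ transversely in an obvious embedded circle $S_0$, and then to prove that the arrival time from $\Sigma_{r_0}$ to $\Sigma_1$ is bounded on all of $S_0$. That last step is argued by contradiction: if some $z\in S_0$ had unbounded arrival time, Lemma~\ref{l:unstable_Ws} would force $z\in\Ws(\gamma_1)$ for some $\gamma_1\subset\Kbr$; transversality $\Wu(\gamma)\pitchfork\Ws(\gamma_1)$ together with the $\lambda$-lemma then traps nearby points of the flow-out $V$ of $\zeta_0$ inside a small $\Ws_{\epsilon'}(\gamma_1)$ that misses $\interior(\Sigma_1)$, contradicting that those points do reach $\Sigma_1$ at time $\tau(s)$. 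Once $S_0$ flows to a genuine embedded circle $S_1\subset\Wu(\gamma)\cap\Sigma_1$, the uniform lower bound on $\diam(S_1)$ comes from Lemma~\ref{l:diam_circle} (an area/Stokes argument), not from any finiteness of circles; your claim that ``the set of possible circles $S$ is finite'' is unjustified, since $\Wu(\gamma)\cap\Sigma_1$ can contain infinitely many embedded circles.
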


In this proposition, we have denoted by ``$\diam$'' the diameter measured with respect to an arbitrary Riemannian distance $d:N\times N\to[0,\infty)$, i.e.
\begin{align*}
\diam(V)=\sup\big\{ d(z,z')\ \big|\ z,z'\in V \big\},\qquad\forall V\subseteq N.
\end{align*}
Clearly, since $N$ is compact, the conclusion of the proposition is independent of the choice of the Riemannian distance.

\begin{Remark}\label{r:lambda_-lambda}
Replacing the contact form $\lambda$ by its negative $-\lambda$, and thus the Reeb vector field $X$ with $-X$, we infer that Propositions~\ref{p:unbounded_Ws} and~\ref{p:large_return} also hold if we replace the stable manifolds by the corresponding unstable manifolds, and the unbounded function $\tau:[0,1)\to(0,\infty)$ by an unbounded function $\tau:[0,1)\to(-\infty,0)$. The first assertion of Proposition~\ref{p:unbounded_Ws} then becomes $\tau(s)\to-\infty$ as $s\to1$.
\hfill\qed
\end{Remark}

The proofs of Propositions~\ref{p:unbounded_Ws} and~\ref{p:large_return} will require some preliminary lemmas.
Consider a closed contact 3-manifold $(N,\lambda)$ whose Reeb vector field $X$ is non-degenerate, endowed with a broken book decomposition with binding $K=\Krad\cup\Kbr\subset N$. By a \emph{compact 1-parameter family of pages} we mean a family of pages $\Sigma_r\subset N$, indexed by $r\in[r_1,r_2]\subset\R$, such that there exist a point $z_0\in\interior(\Sigma_{r_1})$ and a diffeomorphism onto its image $T_{z_0}:[r_1,r_2]\to[0,\infty)$ such that $T_{z_0}(r_1)=0$ and $\phi_{T_{z_0}(r)}(z_0)\in\Sigma_r$ for all $r\in[r_1,r_2]$. We do not require that $\Sigma_r\neq\Sigma_{r'}$ if $r\neq r'$, although this automatically holds if $r$ and $r'$ are sufficiently close. For each $z\in\interior(\Sigma_{r_1})$, there exist a maximal $r_z\in[r_1,r_2]$ and a unique monotone increasing diffeomorphism onto its image \[T_z:[r_1,r_z)\to[0,\infty)\] such that $T_z(r_1)=0$ and $\phi_{T_z(r)}(z)\in\Sigma_r$ for all $r\in[r_1,r_z)$. 
Here, $T_z$ is not defined if $r_z=r_1$.
The function
\begin{align*}
 (r,z)\mapsto T(z,r)=T_{z}(r)
\end{align*}
is smooth on its domain.

\begin{Lemma}
\label{l:unstable_Ws}
Let $z\in\interior(\Sigma_0)$ be such that $r_z>r_1$ and $T_z(r)\to\infty$ as $r\to r_z$. Then $z\in\Ws(\Kbr)$. 
\end{Lemma}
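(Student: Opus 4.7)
I would argue by contradiction: assume $T_z(r) \to \infty$ as $r \to r_z$ but $z \notin \Ws(\Kbr)$. The strategy is to analyze the $\omega$-limit set $\omega(z)$ of the forward orbit of $z$ and show it must be a single hyperbolic closed orbit contained in $\Kbr$.

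First I would pick a sequence $r_n \nearrow r_z$ with $T_z(r_n) \to \infty$, and extract (by compactness of $N$) a subsequence so that $y_n := \phi_{T_z(r_n)}(z) \to y \in N$; by construction $y$ lies in $\omega(z)$. The main step is to show $y \in \Kbr$, which I would split into two cases. If $y$ is a regular point of the cooriented foliation $\interior(\FF)$, i.e.\ $y$ lies in the interior of some page $\Sigma^{\ast} \in \FF$, then in a flow-box neighborhood of $y$ where $X$ is transverse to the leaves, the pages $\Sigma_{r_n}$ intersect the chart in leaves converging to the leaf through $y$. Using the uniqueness and monotonicity of $T_z$ together with the implicit function theorem applied to the arrival map onto these nearby leaves, I would extend $T_z$ continuously past $r_z$ with bounded values, contradicting $T_z(r) \to \infty$. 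If instead $y$ lies on the radial binding $\Krad$, then the ``Return time near the radial binding'' property provides a neighborhood $V$ of $\Krad$ with uniformly bounded return time $T$ to the pages; since $y_n \in V$ eventually, the orbit of $z$ reenters $\Sigma_{r_n}$ within time $T$, again yielding a bounded continuation of $T_z$ past $r_z$, a contradiction. Hence $y \in \Kbr$.

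To conclude $z \in \Ws(\Kbr)$, note that $y$ lies on a hyperbolic closed Reeb orbit $\gamma \subset \Kbr$, so the forward orbit of $z$ enters arbitrarily small tubular neighborhoods of $\gamma$. If this orbit did not converge to $\gamma$, it would escape such a neighborhood along the unstable direction and, by the weak globality property, return in uniformly bounded time to one of the globally-intersecting pages $\Sigma_1, \ldots, \Sigma_h$ infinitely often; this is incompatible with $T_z(r) \to \infty$. Thus the forward orbit of $z$ converges to $\gamma$, which gives $z \in \Ws(\gamma) \subset \Ws(\Kbr)$.

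\textbf{Main obstacle.} The most delicate point is ruling out that $y$ is a regular foliation point. Since the pages of a broken book are global, possibly non-embedded immersed surfaces, the foliation chart near $y$ only records local data, and extending $T_z$ past $r_z$ requires carefully exploiting the coorientation of $\interior(\FF)$ together with the uniqueness and monotonicity of $T_z$, plus the fact that the family $\Sigma_r$ is globally parameterized by the reference orbit through $z_0$.
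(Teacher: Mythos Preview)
Your overall strategy matches the paper's: both rule out accumulation of the forward orbit away from $\Kbr$ by handling the radial binding via the bounded return time property and handling regular points via a flow-box/transversality argument. The paper frames this as a direct contradiction (assume $z\notin\Ws(\Kbr)$, produce $t_n\to\infty$ with $\phi_{t_n}(z)$ bounded away from all of $K$, then show this forces $T_z$ to be bounded near $r_z$), whereas you frame it as computing $\omega(z)$; these are contrapositives of one another.

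There is, however, a genuine gap in your step 3. From a \emph{single} accumulation point $y\in\gamma\subset\Kbr$ you try to conclude convergence by invoking weak globality: if the orbit escaped a neighborhood of $\gamma$ it would ``return in uniformly bounded time to one of the globally-intersecting pages $\Sigma_1,\ldots,\Sigma_h$ infinitely often; this is incompatible with $T_z(r)\to\infty$.'' But $T_z$ is defined relative to the specific family $\{\Sigma_r\}$, not to $\Sigma_1,\ldots,\Sigma_h$, so repeated returns to the latter say nothing about the growth of $T_z$. The fix is easy and makes step 3 unnecessary: since $T_z:[r_1,r_z)\to[0,\infty)$ is a monotone diffeomorphism onto $[0,\infty)$, the accumulation set of $\phi_{T_z(r)}(z)$ as $r\to r_z$ is exactly $\omega(z)$. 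Your step 2 argument therefore applies to \emph{every} $\omega$-limit point, yielding $\omega(z)\subset\Kbr$. As $\omega(z)$ is connected and $\Kbr$ is a finite disjoint union of hyperbolic closed orbits, $\omega(z)$ is a single orbit $\gamma$, and hyperbolicity gives $z\in\Ws(\gamma)\subset\Ws(\Kbr)$. This is precisely the contrapositive of how the paper opens its proof (``$z\notin\Ws(\Kbr)$ implies there exist $t_n\to\infty$ with $\phi_{t_n}(z)$ outside a neighborhood $U$ of $\Kbr$'').
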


\begin{proof}
Let us assume by contradiction that $r_z>r_1$ and $T_z(r)\to\infty$ as $r\to r_z$, but $z\not\in\Ws(\Kbr)$. This latter condition implies that there exist an open neighborhood $U\subset N$ of the broken binding $\Kbr$ and a sequence $t_n\to\infty$ such that $z_n:=\phi_{t_n}(z)\not\in U$. Let $r_n\in[r_1,r_z)$ be the  parameters such that $T_z(r_n)=t_n$. 

We claim that there exists an open neighborhood $V\subset N$ of the radial binding $\Krad$ such that $z_n\not\in V$ for all $n$. Indeed, since the interval $[r_1,r_2]$ is compact, there is a constant $k>0$ such that, for every page $\Sigma\subset N$ of the broken book, there are at most $k$ values of $r\in[r_1,r_2]$ such that $\Sigma=\Sigma_r$. For every open neighborhood $V'\subset N$ of $\Krad$, there exists a smaller open neighborhood $V\subset V'$ of $\Krad$ such that, for every page $\Sigma\subset N$ of the broken book and every point $z'\in\interior(\Sigma)\cap V$, there exist $k$ positive times $0<s_1<s_2<...<s_{k}$ such that $\phi_{s_i}(z')\in\interior(\Sigma)\cap V'$ for all $i=1,...,k$. This implies that no $z_n$ can enter $V$, for otherwise the function $T_z$ would be uniformly bounded from above.

If $\delta>0$ is sufficiently small, the pages $\Sigma_r$ with $r\in[r_z-\delta,r_z]$ are pairwise distinct. We set
\begin{align*}
 Z_\delta:=\bigcup_{r\in[r_z-\delta,r_z]} \Sigma_r.
\end{align*}
Up to further reducing $\delta>0$, there exists $t'>0$ such that 
\begin{align*}
 \phi_{t'}(z')\not\in Z_\delta,\qquad\forall z'\in Z_\delta\setminus(U\cup V).
\end{align*}
We already showed that the sequence $z_n$ belongs to $N\setminus(U\cup V)$. If $n_0$ is large enough, we have $r_{n_0}\in [r_z-\delta,r_z]$, that is, $z_{n_0}\in Z_\delta\setminus(U\cup V)$. This implies that 
\[t_n = t_n-T_z(r_{n_0}) + T_z(r_{n_0})<t'+T_z(r_{n_0}),
\qquad \forall n\geq n_0,\] 
which contradicts the unboundedness of the sequence $t_n$.
\end{proof}

\begin{proof}[Proof of Proposition~\ref{p:unbounded_Ws}]
We consider the compact 1-parameter family of pages $\Sigma_r\subset N$, $r\in[0,1]$, uniquely defined by $\phi_{r\tau(0)}(\zeta(0))\in\interior(\Sigma_r)$. We employ the notation introduced just before Lemma~\ref{l:unstable_Ws}: for each $z=\zeta(s)$, we consider the maximal $r_z\in[0,1]$ for which we have a monotone increasing diffeomorphism onto its image 
\[[0,r_z)\to[0,\infty),\qquad r\mapsto T_z(r)=T(z,r),\] 
such that $T(z,0)=0$ and $\phi_{T(z,r)}(z)\in\Sigma_r$ for all $r\in[0,r_z)$. Notice that
\begin{align*}
r_{z}=1,\quad
\lim_{r\to1} T(z,r)=\tau(s),\qquad\forall s\in[0,1),\ z=\zeta(s).
\end{align*}
Moreover, the point $z_*:=\zeta(1)$ has positive parameter $r_{z_*}>0$.

We claim that
\begin{align*}
 \lim_{r\to r_{z_*}} T(z_*,r)=\infty,
\end{align*}
which, according to Lemma~\ref{l:unstable_Ws}, implies
$z_*\in\Ws(\Kbr)$.
Indeed, let us assume by contradiction that the monotone function $r\mapsto T(z_*,r)$ is bounded from above, 
so that
\begin{align*}
 \lim_{r\to r_{z_*}} T(z_*,r) = t_*<\infty.
\end{align*}
This, together with the fact that $\phi_{T(z_*,r)}(z_*)\in\interior(\Sigma_r)$ for all $r\in[0,r_{z_*})$,  implies that $\phi_{t_*}(z_*)\in\interior(\Sigma_{r_{z_*}})$ and that, for each $\epsilon>0$, there exists $\delta>0$ such that $\tau(s)\in(t_*-\epsilon,t_*+\epsilon)$ for all $s\in[1-\delta,1)$, contradicting the unboundedness of the function $\tau$.

Finally, for each $c>0$, there exists $r_c\in(0,r_{z_*})$ such that $T(z_*,r_c)>c$. For all $s\in[0,1)$ sufficiently close to $1$, we have $\tau(s)>T(\zeta(s),r_c)>c-1$. This proves that $\tau(s)\to\infty$ as $s\to1$.
\end{proof}

\begin{Lemma}
\label{l:diam_circle}
Let $(N,\lambda)$ be a closed contact 3-manifold, $\Sigma\subset N$ a surface of section for its Reeb flow, $\gamma$ a hyperbolic closed Reeb orbit, and $W$ either the stable manifold $\Ws(\gamma)$ or the unstable manifold $\Wu(\gamma)$. Then there is a constant $c=c(\Sigma,\gamma)>0$ such that the image of any embedding $S^1\hookrightarrow\interior(\Sigma)\cap W\setminus\gamma$ has diameter larger than $c$.
\end{Lemma}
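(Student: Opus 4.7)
The argument is by contradiction. Suppose there is a sequence of embedded circles $S_n\subset\interior(\Sigma)\cap W\setminus\gamma$ with $\diam(S_n)\to 0$, and pick points $z_n\in S_n$; passing to a subsequence, the $S_n$ converge in Hausdorff distance to a single point $z_\infty\in\overline\Sigma$. The plan is to compute $\int_{S_n}\lambda$ in two incompatible ways: one, using the isotropy of the invariant manifold $W$, forces $\int_{S_n}\lambda$ to lie in the discrete subgroup $t_0\Z\subset\R$; the other, using Stokes' theorem on a small disk inside $\interior(\Sigma)$ filling $S_n$, forces $\int_{S_n}\lambda$ to be a strictly positive number that tends to $0$.

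For the first computation, recall from Section~\ref{ss:non_degenerate} that $d\lambda$ vanishes identically on $W$. Let $\iota\colon\tilde W\looparrowright N$ denote the injective immersion with image $W$, where $\tilde W$ is the abstract open annulus or open Möbius band on which $\iota$ is defined. The pullback $\iota^*\lambda$ is then a closed $1$-form on $\tilde W$; since $H_1(\tilde W)\cong\Z$ is generated by the lift $\tilde\gamma$ of $\gamma$ and $\int_{\tilde\gamma}\iota^*\lambda=\int_\gamma\lambda=t_0$, where $t_0>0$ denotes the minimal period of $\gamma$, de Rham duality implies that the integral of $\iota^*\lambda$ over a $1$-cycle of $\tilde W$ depends only on its homology class. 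Lifting each $S_n$ to an embedded circle $\tilde S_n\subset\tilde W$ and writing $[\tilde S_n]=k_n\,[\tilde\gamma]$ with $k_n\in\Z$, one obtains
\[\int_{S_n}\lambda=\int_{\tilde S_n}\iota^*\lambda=k_n\,t_0\in t_0\Z.\]

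For the second computation, I would show that, for $n$ large, $S_n$ bounds an embedded disk $D_n\subset\interior(\Sigma)$ with $\int_{D_n}d\lambda\to 0$ as $n\to\infty$. Since $d\lambda$ is a positive area form on $\interior(\Sigma)$, this integral is strictly positive and, for $n$ large, strictly less than $t_0$; Stokes then gives $\int_{S_n}\lambda=\int_{D_n}d\lambda\in(0,t_0)$, which is incompatible with the membership in $t_0\Z$ established above, and this contradiction will prove the lemma.

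The construction of $D_n$ is straightforward when $z_\infty\in\interior(\Sigma)$: for $n$ large, $S_n$ lies in a small embedded coordinate disk of $\interior(\Sigma)$ around $z_\infty$, and $D_n$ is the region bounded by $S_n$ inside this coordinate disk, whose $d\lambda$-area is of order $\diam(S_n)^2$. The main obstacle, and the only substantial subcase, is when $z_\infty$ lies on a boundary Reeb orbit $\gamma'\subset\partial\Sigma$, so that $S_n$ eventually enters arbitrarily thin collar neighborhoods in $\Sigma$ of a boundary circle $\beta\subset\partial\Sigma$ covering $\gamma'$. An embedded loop in such a thin collar either is null-homotopic in the collar, in which case the disk $D_n$ exists as above by the Jordan curve theorem and has vanishing area, or is isotopic to $\beta$. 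But a loop of the second type must Hausdorff-approximate $\beta$, and hence its image $\gamma'\subset N$, to within the thickness of the collar, so its $N$-diameter is bounded below by, say, half the $N$-diameter of $\gamma'$; this is incompatible with $\diam(S_n)\to 0$ and rules out the second alternative for $n$ large, whether $\gamma'$ coincides with $\gamma$ or not.
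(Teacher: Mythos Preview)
Your proof is correct and follows essentially the same strategy as the paper's: compute $\int_\zeta\lambda$ two ways, once via the isotropy of $W$ (giving a value in $t_0\Z$) and once via Stokes on a disk in $\interior(\Sigma)$ (giving the $d\lambda$-area of that disk), and combine the two to force a lower bound on the diameter of the boundary.

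There is one genuine difference worth noting. The paper, working directly rather than by contradiction, first invokes Poincar\'e--Bendixson to show that $\zeta$ is non-contractible in $W\setminus\gamma$ (a bounding disk in $W$ would be tangent to $X$ and hence carry a closed orbit), and from this deduces $\int_\zeta\lambda = kt_0$ with $k\in\{1,2\}$. You sidestep this step entirely: you only extract $\int_{S_n}\lambda \in t_0\Z$ from the $W$ side, and rule out $k_n=0$ on the $\Sigma$ side via the strict positivity of $\int_{D_n}d\lambda$. This is a small but pleasant simplification; the Poincar\'e--Bendixson argument is not actually needed.

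One imprecision in your boundary case: the claim that ``$S_n$ eventually enters arbitrarily thin collar neighborhoods in $\Sigma$ of a boundary circle $\beta$'' is not quite what $N$-Hausdorff convergence to $z_\infty\in\gamma'$ gives you directly. The preimage of a small $N$-ball around $z_\infty$ under the immersion $\Sigma\looparrowright N$ need not be a collar, and in principle $\interior(\Sigma)$ may meet $\gamma'$ transversally at isolated interior points, so your case split on the location of $z_\infty$ in $N$ does not line up with where $S_n$ sits in the abstract surface. The clean fix is: since the immersion has compact domain, for small $\epsilon$ the preimage $j^{-1}(B_\epsilon(z_\infty))$ is a finite disjoint union of small disks and half-disks (one per preimage point of $z_\infty$), each simply connected; connectedness then forces $S_n$ into one of them, where it bounds a small disk. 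Your collar dichotomy then collapses to the first alternative automatically. The paper is comparably brief at the corresponding step, simply asserting that a lower bound on the $d\lambda$-area of the bounded disk forces a lower bound on the $N$-diameter of its boundary.
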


\begin{proof}
Let $\iota:S^1\hookrightarrow \interior(\Sigma)\cap W\setminus\gamma$ be an embedding as in the statement, and $\zeta:=\iota(S^1)$ its image. Since $\Sigma$ is a compact surface whose boundary is immersed in $N$ and whose interior $\interior(\Sigma)$ is embedded in $N$, there is a lower bound for the diameter of non-contractible closed curves in $\Sigma$. Therefore, we only need to consider the case in which $\zeta$ is contractible in $\Sigma$.

Notice that $W\setminus\gamma$ is either an open annulus (if $\gamma$ is negatively hyperbolic) or a disjoint union of two open annuli (if $\gamma$ is positively hyperbolic). Since $\zeta$ is contained in $\interior(\Sigma)$,  
it is nowhere tangent to the Reeb vector field $X$. Since $X$ is tangent to $W$, we infer that $\zeta$ is non-contractible in $W$; indeed, if $\zeta$ were contractible in $W\setminus\gamma$, it would bound a disk $D\subset W\setminus\gamma$ tangent to $X$, and Poincar\'e-Bendixon's theorem would imply that $D$ contains a closed orbit of $X$, which is impossible since $\gamma$ is the only closed orbit of $X$ in~$W$. 

Therefore there exists an open annulus $A\subset W\setminus\gamma$ with boundary $\partial A=\gamma\cup\zeta$. One component of $\partial A$ is a $k$-fold covering map of $\gamma$, with $k=2$ if $\gamma$ is negatively hyperbolic, and $k=1$ if $\gamma$ is positively hyperbolic. We orient $A$ so that the orientation induced on $\gamma$ as its boundary agrees with the orientation on $\gamma$ provided by the Reeb vector field $X$. We then orient $\zeta$ as a negatively oriented boundary component of $A$. Since $X$ is tangent to $W$, the 2-form $d\lambda|_W$ vanishes, and Stokes' theorem gives
\begin{align}
\label{e:int_zeta_lambda}
\int_\zeta\lambda = -\int_{A} d\lambda + \int_\gamma \lambda = \int_\gamma \lambda = kp_\gamma \geq p_\gamma,
\end{align}
where $p_\gamma>0$ is the minimal period of $\gamma$.

Since $\zeta$ is contractible in $\Sigma$, it is the oriented boundary of an embedded disk $D\subset\interior(\Sigma)$. Since $\interior(\Sigma)$ is transverse to the Reeb vector field $X$, the 2-form $d\lambda$ restricts to an area form on $\Sigma$. Therefore, by Stokes' theorem and~\eqref{e:int_zeta_lambda}, we have
\begin{align*}
 \int_D |d\lambda| = \left| \int_D d\lambda \right| 
 = \left| \int_\zeta \lambda \right|\geq p_\gamma.
\end{align*}
This uniform lower bound for the area of $D$, together with the fact that $\Sigma$ is compact and immersed in $N$, implies a uniform lower bound for the diameter of the boundary $\zeta=\partial D$.
\end{proof}

\begin{proof}[Proof of Proposition~\ref{p:large_return}]
Let $\zeta_0:[0,1]\to\interior(\Sigma_0)$ be a path satisfying properties (i) and (ii) in the statement. We consider the space
\begin{align*}
V:=\Big\{ \phi_{t}(\zeta_0(s))\ \Big|\ s\in[0,1),\ t\in[0,\tau(s)]\Big\}.
\end{align*}
We consider the compact 1-parameter family of pages $\Sigma_r\subset N$, $r\in[0,1]$, defined by $\phi_{r\tau(0)}(\zeta(0))\in\interior(\Sigma_r)$. By the implicit function theorem, there exists a maximal open subset $U_0\subset \Sigma_0$ containing $\zeta_0([0,1))$, and a unique smooth function 
\[T_0:U_0\times [0,1]\to[0,\infty)\] 
such that $T_0(\cdot,0)\equiv0$ and $\phi_{T_0(z,r)}(z)\in\Sigma_r$ for all $z\in U_0$, $r\in[0,1]$. Notice that $T_0(\zeta_0(s),1)=\tau(s)$ for all $s\in[0,1)$. Moreover,
$r\mapsto T_0(z,r)$ is a monotone increasing diffeomorphism onto its image for all $z\in U_0$. The paths $\zeta_0$ and $\zeta_1$ are interpolated by the family of paths
\begin{align*}
 \zeta_r:[0,1)\to\interior(\Sigma_r),\quad \zeta_r(s):=\phi_{T_0(\zeta_0(s),r)}(\zeta_0(s)),\qquad r\in[0,1].
\end{align*}

Since $\zeta_0(1)\in\Ws(\Kbr)$, there exists a hyperbolic closed Reeb orbit $\gamma_0\subset \Kbr$ such that $\zeta_0(1)\in\Ws(\gamma_0)$.
For each $\epsilon>0$ small enough, we consider the local unstable manifold
\begin{align*}
\Wu_{\epsilon}(\gamma_0) = \bigcup_{z\in\gamma_0} \Wu_\epsilon(z),
\end{align*}
where
\begin{align*}
\Wu_{\epsilon}(z) &= \left\{ z'\in\Wu(z)\ \left|\ \displaystyle\sup_{t\leq0} d(\phi_t(z),\phi_t(z'))\leq\epsilon \right.\right\}. 
\end{align*}
The space $\Wu_{\epsilon}(\gamma_0)$ is a compact embedded submanifold of $N$ with boundary, containing $\gamma_0$ in its interior.
Since $\tau$ is unbounded, Proposition~\ref{p:unbounded_Ws} implies that $\tau(s)\to+\infty$ as $s\to1$. Moreover, we assumed that $\dot\zeta_0(1)$ is transverse to the stable manifold $\Ws(\gamma_0)$. Therefore, the $\lambda$-lemma from hyperbolic dynamics implies that $V$ accumulates on a path-connected component $W_\epsilon\subset \Wu_\epsilon(\gamma_0)\setminus\gamma_0$  for $\epsilon>0$ small enough, i.e.
\begin{align}
\label{l:lambda_lemma}
\overline V\supseteq W_\epsilon.
\end{align}

\begin{figure}
\begin{footnotesize}
\begingroup%
  \makeatletter%
  \providecommand\color[2][]{%
    \errmessage{(Inkscape) Color is used for the text in Inkscape, but the package 'color.sty' is not loaded}%
    \renewcommand\color[2][]{}%
  }%
  \providecommand\transparent[1]{%
    \errmessage{(Inkscape) Transparency is used (non-zero) for the text in Inkscape, but the package 'transparent.sty' is not loaded}%
    \renewcommand\transparent[1]{}%
  }%
  \providecommand\rotatebox[2]{#2}%
  \newcommand*\fsize{\dimexpr\f@size pt\relax}%
  \newcommand*\lineheight[1]{\fontsize{\fsize}{#1\fsize}\selectfont}%
  \ifx\svgwidth\undefined%
    \setlength{\unitlength}{177.28176189bp}%
    \ifx\svgscale\undefined%
      \relax%
    \else%
      \setlength{\unitlength}{\unitlength * \real{\svgscale}}%
    \fi%
  \else%
    \setlength{\unitlength}{\svgwidth}%
  \fi%
  \global\let\svgwidth\undefined%
  \global\let\svgscale\undefined%
  \makeatother%
  \begin{picture}(1,0.6507307)%
    \lineheight{1}%
    \setlength\tabcolsep{0pt}%
    \put(0,0){\includegraphics[width=\unitlength,page=1]{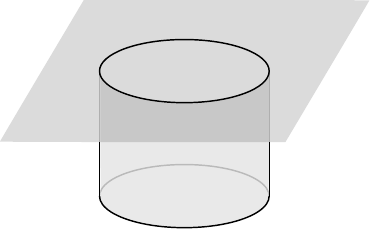}}%
    \put(0.0782614,0.29466777){\color[rgb]{0,0,0}\makebox(0,0)[lt]{\lineheight{1.25}\smash{\begin{tabular}[t]{l}$\Sigma_{r_0}$\end{tabular}}}}%
    \put(0.36593933,0.00699034){\color[rgb]{0,0,0}\makebox(0,0)[lt]{\lineheight{1.25}\smash{\begin{tabular}[t]{l}$\gamma_0$\end{tabular}}}}%
    \put(0.34055598,0.18467364){\color[rgb]{0,0,0}\makebox(0,0)[lt]{\lineheight{1.25}\smash{\begin{tabular}[t]{l}$W_\epsilon$\end{tabular}}}}%
    \put(0.34055598,0.48081207){\color[rgb]{0,0,0}\makebox(0,0)[lt]{\lineheight{1.25}\smash{\begin{tabular}[t]{l}$S_0$\end{tabular}}}}%
    \put(0.69592209,0.5992672){\color[rgb]{0,0,0}\makebox(0,0)[lt]{\lineheight{1.25}\smash{\begin{tabular}[t]{l}$\zeta_{r_0}$\end{tabular}}}}%
    \put(0,0){\includegraphics[width=\unitlength,page=2]{z_S0.pdf}}%
  \end{picture}%
\endgroup%

\end{footnotesize}
\caption{The path $\zeta_{r_0}$ accumulating on  $S_0=\Sigma_{r_0}\cap W_\epsilon$.}
\label{f:S0}
\end{figure}

Up to reducing $\epsilon>0$, there exists a page $\Sigma_{r_0}\subset N$, for some ${r_0}\in(0,1)$, whose interior  intersects $\interior(W_\epsilon)$ transversely in an embedded circle $S_0$, i.e. 
\begin{align*}
\interior(\Sigma_{r_0})\cap\interior(W_\epsilon)=S_0.
\end{align*}
We require $\Sigma_{r_0}$ to be distinct from $\Sigma_1$.
By~\eqref{l:lambda_lemma}, we have $S_0
\subset
\overline V\cap\Sigma_{r_0}$, that is
\begin{align}
\label{e:accumulation}
S_0
\subset
\overline{\zeta_{r_0}([0,1))},
\end{align}
see Figure~\ref{f:S0}. 
By the implicit function theorem, we can find a maximal path-connected open subset $U_1\subset\interior(\Sigma_{r_0})$ containing $\zeta_{r_0}([0,1))$ and a unique smooth function 
\[T_1:U_1\times[r_0,1]\to[0,\infty)\] 
such that $T_1(\cdot,r_0)\equiv0$ and $\phi_{T_1(z,r)}(z)\in\Sigma_r$ for all $r\in[r_0,1]$. The function $r\mapsto T_1(z,r)$ is a monotone increasing diffeomorphism onto its image for all $z\in U_1$, and 
\begin{align*}
 T_1(\zeta_{r_0}(s),r) = T_0(\zeta_{0}(s),r) - T_0(\zeta_{0}(s),r_0),\qquad\forall s\in[0,1),\ r\in[r_0,1].
\end{align*}

Equation~\eqref{e:accumulation} implies that
$S_0\subset\overline U_1$.
We claim that actually
\begin{align}
\label{e:S0_subset_U1}
S_0\subset U_1. 
\end{align}
We first complete the proof of the proposition assuming this claim.
By~\eqref{e:S0_subset_U1}, the space
\begin{align*}
 S_1:=\big\{ \phi_{T_1(z,1)}(z)\ \big|\ z\in S_0 \big\}\subset \Wu(\gamma_0)\cap\Sigma_1
\end{align*}
is a smooth embedded circle, and therefore $\diam(S_1)\geq c(\Sigma_1,\gamma_0) >0$ according to Lemma~\ref{l:diam_circle}. Equation~\eqref{e:accumulation} implies that $S_1
\subset
\overline{\zeta_{1}([0,1))}$, and therefore
\begin{align*}
 \diam(\zeta_1([0,1)))=\diam(\overline{\zeta_1([0,1))})\geq\diam(S_1)\geq c,
\end{align*}
where
\[
c:=\min\big\{ c(\Sigma_1,\gamma_0)\ \big|\ \gamma_0\subset \Kbr \big\}>0.
\]

It remains to establish~\eqref{e:S0_subset_U1}. Let us assume by contradiction that~\eqref{e:S0_subset_U1} does not hold, so that there exists 
\begin{align}
\label{e:z_not_in_U1}
z\in S_0\setminus U_1. 
\end{align}
By \eqref{e:accumulation} there is a sequence $s_n\in[0,1)$ such that $s_n\to1$ and $z_n:=\zeta_{r_0}(s_n)\to z$. 
By the implicit function theorem, there exist a maximal $r_1\in(r_0,1]$ and a unique monotone increasing diffeomorphism onto its image $\tau_1:[r_0,r_1)\to[0,\infty)$ such that $\tau_1(r_0)=0$ and $\phi_{\tau_1(r)}(z)\in\Sigma_r$ for all $r\in[r_0,r_1)$.  Since $z_n\to z$, we have
\begin{align}
\label{e:lim_T1_tau1}
\lim_{n\to\infty} T_1(z_n,r)= \tau_1(r),\qquad\forall r\in[r_0,r_1).
\end{align}

We see that $\tau_1(r)\to\infty$ as $r\to r_1$. Indeed, assume by contradiction that $\tau_1$ 
is  bounded, so that 
\begin{align*}
\lim_{r\to r_1} \tau_1(r) = t_1<\infty.
\end{align*}
This, together with~\eqref{e:lim_T1_tau1},  implies that $\phi_{t_1}(z)\in\interior(\Sigma_{r_1})$. Therefore $r_1=1$ due to the maximality of $r_1$. However, this implies that $z\in U_1$, which contradicts~\eqref{e:z_not_in_U1}.

Since $\tau_1(r)\to\infty$ as $r\to r_1$, Lemma~\ref{l:unstable_Ws} implies that $z\in\Ws(\gamma_1)$ for some broken binding component $\gamma_1\subset \Kbr$. Therefore $z\in\Wu(\gamma_0)\cap\Ws(\gamma_1)$. By assumption, the intersection $\Wu(\gamma_0)\cap\Ws(\gamma_1)$ is transverse. This, together with~\eqref{l:lambda_lemma}, implies that there exist arbitrarily small open neighborhoods $W\subset\Ws(\gamma_1)$ of $z$ such that $W\cap V\neq\varnothing$. Therefore, there exists a sequence $z_n''\in\Sigma_{r_0}\cap\Ws(\gamma_1)\cap V$ such that $z_n''\to z$. We choose $\epsilon'>0$ small enough so that the local stable manifold $\Ws_{\epsilon'}(\gamma_1)$ does not intersect $\interior(\Sigma_1)$, i.e.
\begin{align}
\label{e:Ws_disjoint_Sigma1}
 \Ws_{\epsilon'}(\gamma_1)\cap\interior(\Sigma_1)=\varnothing.
\end{align}
There exists $\delta>0$ small enough such that \[z''':=\phi_{\tau_1(r_1-\delta)}(z)\in \Ws_{\epsilon'/2}(\gamma_1).\] 
Since \[z_n''':=\phi_{T_1(z_n'',r_1-\delta)}(z_n'')\to z''',\] in particular $z_n'''\in\Ws_{\epsilon'}(\gamma_1)\cap\interior(\Sigma_{r_1-\delta})$ for all $n$ large enough. However, $\Ws_{\epsilon'}(\gamma_1)$ is positively invariant by the Reeb flow, and therefore $\phi_t(z_n''')\in\Ws_{\epsilon'}(\gamma_1)$ for all $t\geq0$. This, together with~\eqref{e:Ws_disjoint_Sigma1}, contradicts the fact that, for \[t:=T_1(z_n'',1)-T_1(z_n'',r_1-\delta)>0,\] we have 
\[
\phi_{t}(z_n''')=\phi_{T_1(z_n'',1)}(z_n'')\in\interior(\Sigma_{1}). 
\qedhere
\]
\end{proof}

\section{Hyperbolic invariant sets}
\label{s:hyperbolic}

\subsection{Hyperbolicity}
\label{ss:hyperbolicity}

We begin by recalling some elements from hyperbolic dynamics, which generalize those already introduced in Section~\ref{ss:non_degenerate} for periodic orbits. 
We refer the reader to, e.g., the recent monograph of Fisher-Hasselblatt \cite{Fisher:2019vz} for more details.

Let $N$ be a closed manifold equipped with an auxiliary Riemannian metric, $X$ a nowhere vanishing vector field on $N$, $\phi_t:N\to N$ its flow, and $\Lambda\subset N$ a compact subset invariant by the flow $\phi_t$, i.e.~$\phi_t(\Lambda)=\Lambda$ for all $t\in\R$.
The invariant subset $\Lambda$ is \emph{hyperbolic} if each fiber of the restricted tangent bundle $TN|_{\Lambda}$ admits a splitting
\begin{align*}
 T_zN = \Es(z)\oplus\Eu(z)\oplus\mathrm{span}(X(z)),\qquad\forall z\in\Lambda,
\end{align*}
and there exist constants $b,c>0$ with the following three properties:
\begin{itemize}

\item (\emph{Invariance}) $d\phi_t(z)\Es(z)=\Es(\phi_t(z))$ and $d\phi_t(z)\Eu(z)=\Eu(\phi_t(z))$ for all $z\in\Lambda$ and $t\in\R$,

\item (\emph{Forward contraction}) $\|d\phi_t(z)v\|\leq b\,e^{-ct}\|v\|$ for all $z\in\Lambda$, $v\in\Es(z)$, and $t>0$,

\item (\emph{Backward contraction}) $\|d\phi_{-t}(z)v\|\leq b\,e^{-ct}\|v\|$ for all $z\in\Lambda$, $v\in\Eu(z)$, and $t>0$.

\end{itemize}
The norm appearing in the last two points is induced by the auxiliary Riemannian metric on $N$, and thanks to the compactness of $\Lambda$ the notion of hyperbolicity is independent of the choice of such a Riemannian metric. 
It follows from the three properties listed above that $\Es$ and $\Eu$ are continuous sub-bundles of $TN|_\Lambda$, called the \emph{stable} and \emph{unstable} sub-bundles respectively. 
We stress that the positive constants $b$ and $c$ do not depend on the specific point $z\in\Lambda$ (for this reason, some authors prefer the expression ``uniform hyperbolicity'').

We denote by $d:N\times N\to[0,\infty)$ the distance induced by the auxiliary Riemannian metric on $N$.
For each point $z\in\Lambda$, we consider the spaces
\begin{align*}
\Ws(z)&=\Big\{ z'\in N\ \Big|\ \lim_{t\to\infty}d(\phi_t(z'),\phi_t(z))=0 \Big\},\\
\Wu(z)&=\Big\{ z'\in N\ \Big|\ \lim_{t\to-\infty}d(\phi_t(z'),\phi_t(z))=0 \Big\},
\end{align*}
which are injectively immersed submanifolds of $N$ with tangent spaces $T_z\Ws(z)=\Es(z)$ and $T_z\Wu(z)=\Eu(z)$.
The \emph{stable} and \emph{unstable laminations} of $\Lambda$ are defined respectively as
\begin{align*}
\Ws(\Lambda)=\bigcup_{z\in\Lambda} \Ws(z),\qquad
\Wu(\Lambda)=\bigcup_{z\in\Lambda} \Wu(z).
\end{align*}
These spaces are invariant by the flow $\phi_t$.

\subsection{Local product structure}
\label{ss:local_product_structure}
Let $\Lambda\subset N$ be a hyperbolic invariant set of the flow $\phi_t$ of the nowhere vanishing vector field $X$.
For each $\epsilon>0$ small enough and $z\in\Lambda$, consider the subspaces
\begin{align*}
\Ws_{\epsilon}(z) &= \left\{ z'\in\Ws(z)\ \left|\ \displaystyle\sup_{t\geq0} d(\phi_t(z),\phi_t(z'))\leq\epsilon \right.\right\},\\
\Wu_{\epsilon}(z) &= \left\{ z'\in\Wu(z)\ \left|\ \displaystyle\sup_{t\leq0} d(\phi_t(z),\phi_t(z'))\leq\epsilon \right.\right\},
\end{align*}
which are compact embedded submanifolds of $N$ with (possibly non-smooth) boundary. If $\epsilon>0$ is small enough, there exists $\delta>0$ such that, for each pair of points $z_1,z_2\in\Lambda$ with $d(z_1,z_2)<\delta$, there exists a unique $t\in(-\epsilon,\epsilon)$ such that $\Ws_{\epsilon}(\phi_t(z_1))\cap\Wu_{\epsilon}(z_2)\neq\varnothing$. Moreover, this latter intersection is a single point $z_3$, which is denoted by
\begin{align*}
z_3=:\langle z_1,z_2\rangle\in\Ws_{\epsilon}(\phi_t(z_1))\cap\Wu_{\epsilon}(z_2).
\end{align*} 
The map $\langle\,\cdot\,,\cdot\,\rangle$ is continuous on the $\delta$-neighborhood of the diagonal, see \cite[Statement~1.4]{Bowen:1972ws}. The invariant set $\Lambda$ is said to have \emph{local product structure} when the bracket $\langle\,\cdot\,,\cdot\,\rangle$ takes values into $\Lambda$, i.e.~$\langle z_1,z_2\rangle\in\Lambda$ for all $z_1,z_2\in\Lambda$ with $d(z_1,z_2)<\delta$.
It turns out that $\Lambda$ has local product structure if and only if it is a \emph{locally maximal} invariant set, meaning that it has an open neighborhood $U\subset N$, called an \emph{isolating neighborhood}, such that
\begin{align}
\label{e:isolating_nbhd}
 \Lambda = \bigcap_{t\in\R} \phi_t(U),
\end{align}
see \cite[Th.~6.2.7]{Fisher:2019vz} or \cite[App.~E]{Contreras:2014vo}. Under the local maximality assumption, the stable and unstable laminations can be equivalently characterized as stable and unstable basins of $\Lambda$ respectively, i.e.
\begin{equation}
\label{e:basin_characterization}
\begin{split}
\Ws(\Lambda)&=\big\{ z\in N\ \big|\  \omega\mbox{-}\!\lim(z)\subseteq\Lambda\big\},\\
\Wu(\Lambda)&=\big\{ z\in N\ \big|\  \alpha\mbox{-}\!\lim(z)\subseteq\Lambda\big\}, 
\end{split}
\end{equation}
where the $\alpha$-limit and the $\omega$-limit are with respect to the flow $\phi_t$, see \cite[Def.~1.5.5 and Th.~5.3.25]{Fisher:2019vz}.

\subsection{A criterium for the Anosov property}
A \emph{basic set} for a flow is a compact, locally maximal, hyperbolic invariant subset containing a dense orbit and a dense subspace of closed orbits.

\begin{Remark}\label{r:basic_set_with_infinitely_many_closed_orbits}
Let $\Lambda$ be a basic set for the flow of the nowhere vanishing vector field $X$.
The definition of basic set implies that, if $\Lambda$ does not consist of a single closed orbit, then $\Lambda$ must contain infinitely many closed orbits, and every closed orbit in $\Lambda$ is non isolated in $\Per(X)\cap\Lambda$. 
\hfill\qed
\end{Remark}

In the proof of Theorem~\ref{t:Reeb}, we shall need the following result due to Bowen and Ruelle. In the original source \cite[Cor.~5.7]{Bowen:1975ua}, the result is part of a statement for Axiom A flows; nevertheless, the claim that we need holds as well without the Axiom A assumption, and we provide the proof for the reader's convenience. As usual, all our vector fields are smooth, but we point out that Bowen and Ruelle's result requires $C^2$ flows. We recall that a subset $\Lambda$ of a smooth manifold $N$ has positive measure when, for some local chart $\psi:U\to\R^n$ of $N$, the image $\psi(\Lambda\cap U)$ has positive Lebesgue measure.

\begin{Thm}[Bowen-Ruelle]
\label{t:Bowen_Ruelle}
Let $N$ be a closed connected manifold, $X$ a nowhere vanishing $C^2$ vector field on $N$, and $\phi_t:N\to N$ the flow of $X$. If $\Lambda$ is a hyperbolic basic set for $\phi_t$ of positive measure, then $\Lambda=N$.
\end{Thm}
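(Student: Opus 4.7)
The plan is to show that $\Lambda$ is open in $N$; combined with its compactness and the connectedness of $N$, this forces $\Lambda = N$. The argument proceeds in three stages.

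First, since $\Lambda$ has positive Lebesgue measure, the Lebesgue density theorem produces a density point $z_0 \in \Lambda$. The local unstable manifolds $\{\Wu_\epsilon(z)\}_{z \in \Lambda}$ extend by the stable/unstable manifold theorem to a H\"older continuous lamination of a neighborhood of $\Lambda$ in $N$, and because $X$ is $C^2$ the Anosov--Sinai absolute continuity theorem applies: the holonomies of this lamination between nearby transversals are absolutely continuous with respect to the induced Riemannian measures. A Fubini-type decomposition of Lebesgue measure near $z_0$ in coordinates adapted to the splitting $\Es \oplus \Eu \oplus \mathrm{span}(X)$ then upgrades the density-point property to the following statement: there exists $z \in \Lambda$ such that $\Wu_\epsilon(z) \cap \Lambda$ has full one-dimensional Lebesgue measure in $\Wu_\epsilon(z)$. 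Since $\Lambda$ is closed, this intersection is a closed full-measure subset of the connected $1$-manifold $\Wu_\epsilon(z)$, hence equals the whole of $\Wu_\epsilon(z)$; thus $\Wu_\epsilon(z) \subset \Lambda$.

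Second, the basic set $\Lambda$ is topologically transitive, so the forward orbit $\{\phi_t(z)\}_{t \geq 0}$ is dense in $\Lambda$. Flow invariance of $\Lambda$ gives $\phi_t(\Wu_\epsilon(z)) \subset \Lambda$ for every $t$; since $\phi_t$ expands local unstable manifolds in forward time, $\phi_t(\Wu_\epsilon(z))$ contains $\Wu_\epsilon(\phi_t(z))$ once $t$ is sufficiently large, so $\Wu_\epsilon(\phi_t(z)) \subset \Lambda$ for all such $t$. Using the density of $\{\phi_t(z)\}_{t \geq 0}$ in $\Lambda$, the continuity of the unstable lamination, and the closedness of $\Lambda$, this inclusion passes to limits and yields $\Wu_\epsilon(z') \subset \Lambda$ for every $z' \in \Lambda$. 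Applying the same argument to the flow of $-X$, for which $\Lambda$ remains a hyperbolic basic set with the roles of stable and unstable manifolds interchanged, gives $\Ws_\epsilon(z') \subset \Lambda$ for every $z' \in \Lambda$ as well.

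Third, fix any $z' \in \Lambda$. A sufficiently small neighborhood $U \subset N$ of $z'$ admits a parametrization
\[
(w_1, w_2, t) \mapsto \phi_t\big(\langle w_1, w_2 \rangle\big),
\qquad w_1 \in \Ws_\epsilon(z'),\ w_2 \in \Wu_\epsilon(z'),\ |t|<t_0.
\]
By the previous step, $w_1, w_2 \in \Lambda$, so the local product structure recalled in Section~\ref{ss:local_product_structure} gives $\langle w_1, w_2 \rangle \in \Lambda$, and flow invariance then yields $\phi_t(\langle w_1, w_2 \rangle) \in \Lambda$. Hence $U \subset \Lambda$, so $\Lambda$ is open and the conclusion follows. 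The most delicate step is the first: the absolute continuity of the stable and unstable laminations of a uniformly hyperbolic set is a classical but subtle theorem of Anosov and Sinai, and it is the unique place where the $C^2$ regularity of $X$ is essential; the propagation in the second step is topological, and the third step is a direct consequence of the local product structure.
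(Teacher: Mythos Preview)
Your strategy differs from the paper's: the paper quotes the Bowen--Ruelle theorem that a hyperbolic basic set of a $C^2$ flow whose stable basin has positive measure is an attractor, observes that $\Lambda$ (having positive measure itself, hence positive-measure stable and unstable basins) is then simultaneously an attractor and a repeller, and concludes that $\Lambda$ is open. Your direct route via absolute continuity and local product structure is a legitimate alternative in principle, but as written there are two genuine gaps.

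In step~2 you assert that topological transitivity of $\Lambda$ makes the forward orbit $\{\phi_t(z)\}_{t\geq0}$ dense in $\Lambda$. Transitivity only guarantees that \emph{some} point has a dense orbit; there is no reason the particular $z$ produced by the absolute-continuity argument in step~1 enjoys this property (nothing prevents $z$ from lying, say, on a periodic orbit). A correct propagation would first transfer the inclusion $\Wu_\epsilon(z)\subset\Lambda$ to a nearby periodic point $p\in\Lambda$ via the local product structure, iterate by the period of $p$ to get $\Wu_\epsilon(p)\subset\Lambda$ and hence $\Wu(p)\subset\Lambda$, and then use that the unstable manifold of a periodic orbit is dense in its basic set together with continuity of the lamination and closedness of $\Lambda$.

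In step~3, with the paper's convention $\langle z_1,z_2\rangle\in\Ws_\epsilon(\phi_t(z_1))\cap\Wu_\epsilon(z_2)$, your map $(w_1,w_2,t)\mapsto\phi_t(\langle w_1,w_2\rangle)$ with $w_1\in\Ws_\epsilon(z')$ and $w_2\in\Wu_\epsilon(z')$ is degenerate and does not parametrize a neighborhood of $z'$. Indeed $w_1$ lies on the same strong stable leaf as $z'$ and $w_2$ on the same strong unstable leaf as $z'$, so $\langle w_1,w_2\rangle\in\Wu(w_2)\cap\Ws(\phi_t(w_1))=\Wu(z')\cap\Ws(\phi_t(z'))$, which meets only along the orbit of $z'$; the map collapses entirely. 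The intended chart is obtained by swapping the arguments: $(w_1,w_2,t)\mapsto\phi_t(\langle w_2,w_1\rangle)$, so that $\langle w_2,w_1\rangle\in\Wu_\epsilon(w_1)\cap\Ws_\epsilon(\phi_s(w_2))$ genuinely sweeps out a transversal as $w_1,w_2$ vary, and then local product structure applied to $w_1,w_2\in\Lambda$ gives $\langle w_2,w_1\rangle\in\Lambda$ as you want.
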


\begin{proof}
Since $\Lambda$ is of positive measure, both $\Ws(\Lambda)$ and $\Wu(\Lambda)$ are of positive measure as well. Since $\Ws(\Lambda)$ is of positive measure, a statement of Bowen and Ruelle \cite[Th.~5.6]{Bowen:1975ua} (see also \cite[Th.~7.4.6]{Fisher:2019vz}) implies that $\Lambda$ is an attractor, i.e.~$\Lambda$ has an open isolating neighborhood $U\subset N$ such that $\phi_t(U)\subset U$ for all $t\geq0$ large enough. Let us consider the characterization~\eqref{e:basin_characterization} of the stable and unstable laminations $\Ws(\Lambda)$ and $\Wu(\Lambda)$.
Since $\Lambda$ is an attractor, an argument due to Hurley \cite[Lemma~1.6]{Hurley:1982uc} implies that there exists an open neighborhood $V\subset N$ of $\Lambda$ such that  
\begin{align*}
 \Ws(\Lambda)=\bigcup_{t>0} \phi_{-t}(V).
\end{align*}
In particular, $\Ws(\Lambda)$ is an open subset of $N$.
Since $\Wu(\Lambda)$ is of positive measure, another application of Bowen and Ruelle's \cite[Th.~5.6]{Bowen:1975ua} implies that $\Lambda$ is an attractor for the backwards flow $\overline{\phi}_t:=\phi_{-t}$, i.e.~$\Lambda$ has an open isolating neighborhood $W\subset N$ such that $\phi_{-t}(W)\subset W$ for all $t\geq0$ large enough. This implies
\begin{align}
\label{e:loc_max_one_sided}
 \bigcap_{t\geq t_0} \phi_{-t}(W)=\Lambda,\qquad\forall t_0\in\R.
\end{align}
For each $z\in\Ws(\Lambda)$ there exists $t_z>0$ such that  $\phi_t(z)\in W$ for all $t\geq t_z$, and by~\eqref{e:loc_max_one_sided} we infer
\begin{align*}
 z\in \bigcap_{t\geq t_z} \phi_{-t}(W)=\Lambda.
\end{align*}
Namely, $\Ws(\Lambda)=\Lambda$, and in particular $\Lambda$ is an open subset of $N$. Since $N$ is connected and $\Lambda$ is both open and closed, we conclude that $\Lambda=N$.
\end{proof}

\subsection{The closure of the space of periodic orbits}
From now on, we will require our closed manifold $N$ to have dimension
$\dim(N)=3$.
We consider the subspace of closed orbits of $\phi_t$, which we denote by
\begin{align*}
\Per(X):=\bigcup_{t>0} \Fix(\phi_t) \subset N.
\end{align*}
In the rest of the section, we will make the following  extra assumptions:
\begin{itemize}

\item[(i)] The closure $\overline{\Per(X)}$ is a hyperbolic invariant subset, whose stable and unstable bundles $\Es,\Eu\subset TN|_{\Lambda}$ have rank 
\begin{align}
\label{e:rank}
\rank(\Es)=\rank(\Eu)=1.
\end{align}

\item[(ii)] The Kupka-Smale transversality condition: $\Wu(\gamma_1)\pitchfork\Ws(\gamma_2)$ for all closed orbits $\gamma_1,\gamma_2\subset\Per(X)$.
\end{itemize}

\begin{Remark}
\label{r:Reeb}
The rank condition~\eqref{e:rank} is satisfied by all hyperbolic invariant subsets of Reeb flows $\phi_t$ on closed contact 3-manifolds $(N,\lambda)$. Indeed, let $\Lambda\subset N$ be such an hyperbolic invariant subset for $\phi_t$, with stable and unstable bundles $\Es,\Eu\subset TN|_\Lambda$. The Reeb flow $\phi_t$ preserves the contact form $\lambda$, and therefore its exterior differential $d\lambda$ as well. This, together with the forward and backward contraction properties of $\Es$ and $\Eu$,  implies that $\Es\oplus\Eu=\ker(\lambda)$, $d\lambda|_{\Es}=0$, and $d\lambda|_{\Eu}=0$. Since $d\lambda$ is symplectic on the distribution $\ker(\lambda)$, we conclude that $\rank(\Es)=\rank(\Eu)=1$.
\hfill\qed
\end{Remark}

The Kupka-Smale transversality condition  implies that, for each pair of closed orbits $\gamma_1,\gamma_2\subset\Per(X)$, 
the intersection $\Wu(\gamma_1)\cap\Ws(\gamma_2)$ is an
 injectively immersed 1-dimensional submanifold of $N$ invariant by the flow; the orbits contained in $\Wu(\gamma_1)\cap\Ws(\gamma_2)$ are called \emph{heteroclinics}, and more specifically \emph{homoclinics} when $\gamma_1=\gamma_2$. 

Two closed orbits $\gamma_1,\gamma_2\subset\Per(X)$ are said to be in the same \emph{homoclinic class}, and we will write it as 
$\gamma_1\sim\gamma_2$,
when the transverse intersections $\Ws(\gamma_1)\cap\Wu(\gamma_2)$ and $\Wu(\gamma_1)\cap\Ws(\gamma_2)$ are both non-empty. 
An application of the shadowing lemma 
from hyperbolic dynamics implies that such heteroclinics are contained in $\overline{\Per(X)}$, i.e.
\begin{align}
\label{e:both_heteroclinics}
 \Big(\Ws(\gamma_1)\cap\Wu(\gamma_2)\Big)
 \cup
 \Big(\Wu(\gamma_1)\cap\Ws(\gamma_2)\Big)
 \subset
 \overline{\Per(X)},
 \qquad
 \forall\gamma_1\sim\gamma_2,
\end{align}
see, e.g., \cite[Cor.~6.5.3]{Fisher:2019vz}. The relation $\sim$ is an equivalence relation, the transitivity property being a consequence of the $\lambda$-lemma from hyperbolic dynamics.

If two points $z_1,z_2\in\Per(X)$ are sufficiently close so that the brackets $\langle z_1,z_2\rangle$ and $\langle z_2,z_1\rangle$ are both well defined, their closed orbits $\gamma_1(t):=\phi_t(z_1)$ and $\gamma_2(t):=\phi_t(z_2)$ are in the same homoclinic class. Therefore, $\langle z_1,z_2 \rangle$ and $\langle z_2,z_1\rangle$ belong to $\overline{\Per(X)}$. By the continuity of the bracket $\langle\,\cdot,\cdot\,\rangle$, we conclude that $\langle z_1,z_2\rangle\in\overline{\Per(X)}$ for all sufficiently close points $z_1,z_2\in\overline{\Per(X)}$. Namely, $\overline{\Per(X)}$ has local product structure, and therefore is locally maximal.

The spectral decomposition theorem of Smale \cite[Th.~5.3.37]{Fisher:2019vz} implies that $\overline{\Per(X)}$ decomposes as a finite disjoint union
\begin{align}
\label{e:spectral_decomposition_Per}
\overline{\Per(X)}
=
\Lambda_1 \cup ...\cup \Lambda_k,
\end{align}
where each $\Lambda_j$ is a basic set. Notice that each $\Lambda_j$ is open in $\Lambda$, since it is the complement of the closed set $\cup_{i\neq j}\Lambda_i$.
The fact that each $\Lambda_j$
contains a dense orbit implies that the decomposition~\eqref{e:spectral_decomposition_Per} is unique.
We will need the following statement statement, which is certainly well known to the experts, and we add its proof for the reader's convenience.

\begin{Lemma}
\label{l:basic_sets}
The basic sets $\Lambda_j$ provided by the spectral decomposition~\eqref{e:spectral_decomposition_Per} are precisely the closures of the homoclinic classes, i.e.
\begin{align*}
\Lambda_j=\overline{\bigcup_{\gamma\sim\gamma_j} \gamma},
\qquad
\forall\mbox{ orbits }
\gamma_j\subset\Lambda_j\cap\Per(X).
\end{align*}
\end{Lemma}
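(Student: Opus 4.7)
The plan is to prove the two inclusions $\Lambda_j\supseteq\overline{\bigcup_{\gamma\sim\gamma_j}\gamma}$ and $\Lambda_j\subseteq\overline{\bigcup_{\gamma\sim\gamma_j}\gamma}$ separately, exploiting throughout that $\overline{\Per(X)}$ is a locally maximal hyperbolic set to which Smale's spectral decomposition applies.

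For $\Lambda_j\supseteq\overline{\bigcup_{\gamma\sim\gamma_j}\gamma}$, since $\Lambda_j$ is closed it is enough to verify that every closed orbit $\gamma\sim\gamma_j$ satisfies $\gamma\subset\Lambda_j$. Fixing any heteroclinic point $z\in\Ws(\gamma_j)\cap\Wu(\gamma)$ supplied by the definition of $\sim$, equation~\eqref{e:both_heteroclinics} places $z$ in $\overline{\Per(X)}$, and hence in some basic piece $\Lambda_i$ of~\eqref{e:spectral_decomposition_Per}. Since $\Lambda_i$ is closed and flow-invariant, it must also contain the $\omega$-limit and the $\alpha$-limit of the orbit of $z$, which are $\gamma_j$ and $\gamma$ respectively; disjointness of the spectral decomposition, together with $\gamma_j\subset\Lambda_j$, then forces $i=j$, whence $\gamma\subset\Lambda_j$.

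For the reverse inclusion $\Lambda_j\subseteq\overline{\bigcup_{\gamma\sim\gamma_j}\gamma}$, I would use the density of closed orbits in the basic set $\Lambda_j$ to reduce the statement to showing that every closed orbit $\gamma\subset\Lambda_j$ is homoclinically related to $\gamma_j$. For this I would exploit that $\Lambda_j$ is topologically transitive, hence contains a forward-dense orbit $t\mapsto\phi_t(w)$; given $p\in\gamma$, $q\in\gamma_j$, and arbitrarily small $\delta>0$, one can thus find $t_1<t_2$ with $d(\phi_{t_1}(w),p)<\delta$ and $d(\phi_{t_2}(w),q)<\delta$. Concatenating a long backward arc of $\gamma$ ending at $p$, the segment $\phi_{[t_1,t_2]}(w)$, and a long forward arc of $\gamma_j$ starting at $q$ produces a $\delta$-pseudo-orbit in $\Lambda_j$ with only two small jumps; the shadowing lemma for hyperbolic flows then yields, for $\delta$ small enough, an actual orbit whose $\alpha$-limit is $\gamma$ and whose $\omega$-limit is $\gamma_j$, i.e.\ a point of $\Wu(\gamma)\cap\Ws(\gamma_j)$. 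Swapping the roles of $\gamma$ and $\gamma_j$ in this construction produces a point of $\Ws(\gamma)\cap\Wu(\gamma_j)$; transversality of both intersections is built into the Kupka--Smale hypothesis, so $\gamma\sim\gamma_j$, as required.

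The step I expect to require the most care is the invocation of the flow shadowing lemma on the invariant set $\Lambda_j$, with the correct time reparametrisation at each jump so as to land precisely on $p$ and $q$. A fallback strategy, staying entirely within the material of Section~\ref{ss:local_product_structure}, is to combine the local product bracket $\langle\,\cdot\,,\cdot\,\rangle$ with the $\lambda$-lemma: a transverse disk in $\Wu(\gamma)$ obtained by bracketing at $p$ can be iterated forward along the orbit of $w$ until, again by the $\lambda$-lemma, it meets $\Ws(\gamma_j)$ transversely near $q$, producing the desired heteroclinic point directly and bypassing any explicit pseudo-orbit construction.
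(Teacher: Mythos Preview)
Your proof is correct but follows a genuinely different route from the paper. The paper does not argue by double inclusion. Instead, it observes (using the local product structure established just before the lemma) that sufficiently close periodic points are always homoclinically related, so the closures of the homoclinic classes $\Upsilon_1,\dots,\Upsilon_h$ already form a finite clopen partition of $\overline{\Per(X)}$; it then verifies directly that each $\Upsilon_j$ is a basic set by proving topological transitivity---for any two nonempty open subsets $U_1,U_2\subset\Upsilon_j$, a heteroclinic between periodic orbits in $U_1$ and $U_2$ (supplied by~\eqref{e:both_heteroclinics}) gives $\phi_t(U_1)\cap U_2\neq\varnothing$, and a standard Baire argument then produces a dense orbit. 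Uniqueness of the spectral decomposition finishes the proof.

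Your approach instead fixes the $\Lambda_j$'s and establishes both inclusions, the nontrivial one via a pseudo-orbit and the shadowing lemma for flows (or, in your fallback, the $\lambda$-lemma). Both arguments are valid. The paper's is slightly more self-contained in that, within the proof of the lemma, it invokes only the already-recorded fact~\eqref{e:both_heteroclinics} together with Baire category, and never needs the full shadowing statement with time reparametrisation; your argument is more direct and mirrors the usual proof that transitive pieces coincide with homoclinic classes in the diffeomorphism setting. One small point worth making explicit in your write-up: after shadowing, the conclusion that the shadowing orbit lies in $\Wu(\gamma)\cap\Ws(\gamma_j)$ uses the characterisation of $\Wu_\epsilon(\gamma)$ (resp.\ $\Ws_\epsilon(\gamma_j)$) as the set of points whose backward (resp.\ forward) orbit stays $\epsilon$-close to $\gamma$ (resp.\ $\gamma_j$), which holds once $\epsilon$ is below the local-manifold scale.
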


\begin{proof}
We already remarked that two sufficiently close points $z_1,z_2\in\Per(X)$ belong to closed orbits in the same homoclinic class. This, together with the compactness of $\overline{\Per(X)}$, implies that this latter space decomposes as the finite disjoint union
\begin{align}
\label{e:decomposition_homoclinic}
 \overline{\Per(X)}=\Upsilon_1\cup...\cup\Upsilon_h,
\end{align}
where each $\Upsilon_j$ is the closure of a homoclinic class and is open in $\overline{\Per(X)}$. Notice that each $\Upsilon_j$ is a locally maximal invariant set, since it has local product structure. In order to prove the lemma, we are left to show that each $\Upsilon_j$ contains a dense orbit; indeed, this will imply that $\Upsilon_j$ is a basic set, and as we already remarked the decomposition of $\overline{\Per(X)}$ as a finite disjoint union of basic sets is unique. 

Let $\Upsilon\in\{\Upsilon_1,...,\Upsilon_h\}$ be one of the above closure of homoclinic classes, and $U_1,U_2\subseteq\Upsilon$ two non-empty open subsets. We fix points $z_i\in U_i\cap\Per(X)$, and denote by $\gamma_i\subset\Upsilon\cap\Per(X)$ the closed orbit of $z_i$. Let $\gamma\subset \Wu(\gamma_1)\cap\Ws(\gamma_2)$ be a heteroclinic orbit. By~\eqref{e:both_heteroclinics}, $\gamma$ is contained in $\overline{\Per(X)}$. Since $z_1$ belongs to the $\alpha$-limit of $\gamma$ and $z_2$ belongs to the $\omega$-limit of $\gamma$, the heteroclinic $\gamma$ is contained in $\Upsilon$. In particular, there exists $t_1<t_2$ such that $\gamma(t_1)\in U_1$ and $\gamma(t_2)\in U_2$. Therefore, for $t:=t_2-t_1>0$, we have
\begin{align}
\label{e:transitivity}
\phi_t(U_1)\cap U_2\neq\varnothing.
\end{align}

It is well known that the validity of~\eqref{e:transitivity} for arbitrary non-empty open subsets $U_1,U_2\subseteq\Upsilon$ implies that $\Upsilon$ contains a dense orbit. This can be  shown as follows. Let $V_n\subset\Upsilon$, for $n\in\N$, be a countable basis of non-empty open subsets of $\Upsilon$. We choose a non-empty open subset $W_0\subset\Upsilon$ such that $\overline{W_0}\subset V_0$. We now define iteratively for increasing values of $n\geq1$ the open subset
\begin{align*}
 Z_n:= \bigcup_{t>0} \Big( W_{n-1}\cap\phi_t^{-1}(V_n)\Big),
\end{align*}
which is non-empty according to the argument of the previous paragraph. We choose a non-empty open subset $W_n\subset \Upsilon$ such that $\overline{W_n}\subset Z_n$. We have obtained a nested sequence of non-empty sets
\begin{align*}
 W_0
 \supset 
 \overline{W_1}
 \supset 
 W_1
 \supset 
 \overline{W_2}
 \supset 
 W_2
 \supset
 ...
\end{align*}
Cantor's intersection theorem implies that the intersection
\begin{align*}
W:=\bigcap_{n\in\N} W_n = \bigcap_{n\in\N} \overline{W_n}
\end{align*}
is non-empty. For each $z\in W$, the corresponding orbit $\gamma(t):=\phi_t(z)$ visits all the open subsets $V_n$ of the basis, and therefore is a dense orbit in $\Upsilon$.
\end{proof}

Let $\Lambda\subset\overline{\Per(X)}$ be the closure of a homoclinic class.
Every point $z\in\Lambda$ has an arbitrarily small open neighborhood $U\subset N$ such that the path-connected component $W\subset\Ws(\Lambda)\cap U$ containing $z$ separates $U$ in two path-connected components $U_1$ and $U_2$. The point $z$ is called an \emph{$s$-boundary} when at least one $U_i$ satisfies $U_i\cap\Lambda=\varnothing$. 
By replacing the stable lamination $\Ws(\Lambda)$ with the unstable lamination $\Wu(\Lambda)$, we obtain the analogous concept of \emph{$u$-boundary}. 
The following lemma was proved for suitable discrete dynamical systems by Bonatti and Langevin \cite[Prop.~2.1.1]{Bonatti:1998uo}, and we provide a slightly different argument here in the setting of our flows.

\begin{Lemma}
\label{l:bonatti_langevin}
Any $s$-boundary point $z\in\Lambda$ is contained in the stable manifold $\Ws(\gamma)$ of some closed orbit $\gamma\subset\Per(X)\cap\Lambda$. Analogously, any $u$-boundary $z\in\Lambda$ is contained in the unstable manifold $\Wu(\gamma)$ of some closed orbit $\gamma\subset\Per(X)\cap\Lambda$.
\end{Lemma}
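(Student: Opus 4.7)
I would adapt the argument of Bonatti-Langevin from the setting of discrete surface Axiom~A diffeomorphisms to the present flow setting, in four steps.

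\emph{Step 1.} I first show that the set $B^s\subseteq\Lambda$ of $s$-boundary points is closed and flow-invariant. Closedness follows from continuous dependence of the weak stable lamination on base points, so that the empty-side condition passes to limit points. Flow-invariance follows because $\phi_t$ maps local weak stable leaves to local weak stable leaves and preserves the condition that one component of their complement is disjoint from $\Lambda$. Consequently, $\omega(z)\subseteq B^s$ is a non-empty compact flow-invariant subset.

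\emph{Step 2.} I then analyze the transverse structure on a local cross section $D$ to the flow. Quotienting $\Lambda\cap D$ by the weak stable lamination yields a compact subset $K\subset\R$. If $K$ had non-empty interior, then $\Lambda$ would have positive measure and Theorem~\ref{t:Bowen_Ruelle} would force $\Lambda=N$; in that case $B^s=\varnothing$ and the lemma holds vacuously. So $K$ is nowhere dense, its one-sided gap endpoints form a countable set, and the leaf-space image of $B^s\cap D$ is contained in this countable set.

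\emph{Step 3.} I then locate a closed Reeb orbit in $\omega(z)$. Pick a minimal closed flow-invariant subset $M\subseteq\omega(z)$ via Zorn's lemma. Its leaf-space image is a compact subset of the countable gap-endpoint set of Step~2, invariant under the induced first-return map $\bar f$. By the Cantor-Bendixson theorem a countable compact set has an isolated point, and $\bar f$-invariance combined with the minimality of $M$ forces this isolated point to be $\bar f$-periodic. The contraction of the first-return map on the corresponding weak stable leaf produces a periodic point $q$ of the flow, and by minimality $M=\gamma$ is the closed Reeb orbit of $q$.

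\emph{Step 4.} Finally, to upgrade $\gamma\subseteq\omega(z)$ to $z\in\Ws(\gamma)$, I pick times $t_n\to\infty$ with $\phi_{t_n}(z)\to z_0\in\gamma$ on a small cross section at $z_0$, and compare the weak stable plaques through $\phi_{t_n}(z)$ with the one through $z_0$. If any of these plaques coincide, then $\phi_{t_n}(z)$ lies on $\Ws(\gamma)$ and flow-invariance of $\Ws(\gamma)$ gives $z\in\Ws(\gamma)$. Otherwise, the leaf-space iterates of $\pi^s(z)$ under $\bar f$ form an infinite injective sequence in the countable set of Step~2 accumulating at the $\bar f$-periodic point $\pi^s(z_0)$, and a Cantor-Bendixson analysis of the closure of this sequence, together with the locally expanding nature of $\bar f$ in the unstable direction, eventually produces a repetition and reduces to the previous case. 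I expect the main obstacle to be this last step, in particular controlling orientations of the empty side along the orbit when $\gamma$ is negatively hyperbolic.
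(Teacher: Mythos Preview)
Your Steps~1--3 are essentially sound (though the Bowen--Ruelle detour in Step~2 is both unnecessary and unjustified: the implication ``$K$ has interior $\Rightarrow$ $\Lambda$ has positive measure'' does not follow from local product structure alone, since the stable factor could still have measure zero; fortunately you only need that $s$-boundary points project to one-sided gap endpoints of $K$, and those are countable regardless).

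The genuine gap is Step~4, and it is not a minor one. Knowing $\gamma\subset\omega(z)$ and that $\bar f^p$ is locally expanding at the periodic leaf $\pi^s(z_0)$ does \emph{not} force a repetition among the leaf-space iterates of $\pi^s(z)$. The forward $\bar f$-orbit of $\pi^s(z)$ can approach $\pi^s(z_0)$, be expelled by the expansion, wander globally, and return arbitrarily close again --- this is exactly the picture of a transverse homoclinic intersection, and nothing in your countability or Cantor--Bendixson argument rules it out. Your sketch ``eventually produces a repetition'' is where the proof stops being a proof.

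The paper's argument avoids this difficulty by never passing to the leaf-space quotient. Instead it works directly on the local unstable interval: the $s$-boundary hypothesis yields a half-interval $\ell\subset\Wu_\epsilon(z)\setminus\{z\}$ with $\ell\cap\Lambda=\varnothing$, and this empty half is carried forward by the flow. For \emph{any} $\omega$-limit point $z'$ and approximating sequence $z_n=\phi_{t_n}(z)\to z'$, one shows $\langle z_n,z'\rangle=z'$ for all large $n$ by a short pigeonhole argument: three distinct projections $\langle z_{n_i},z'\rangle$ on $\Wu_\epsilon(z')$ would, via local product structure, place points of $\Lambda$ on \emph{both} sides of some $z_{n_2}$ in $\Wu_\epsilon(z_{n_2})$, and flowing back by $-t_{n_2}$ one of them lands in $\ell$, contradicting $\ell\cap\Lambda=\varnothing$. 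Thus all $z_n$ eventually lie on $\Ws(z')$; the return dynamics on that single stable interval, analyzed via Brouwer's fixed point theorem on a subinterval (with a case split handling the orientation issue you worried about), then yields a periodic point on $\Ws(z')$, hence $z\in\Ws(\gamma)$. This sidesteps entirely the problem of controlling recurrence in the leaf space.
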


\begin{proof}
We prove the first claim, the second one being analogous.  We fix $\epsilon>0$ small enough so that, for each $z\in\Lambda$, the local unstable manifold $\Wu_\epsilon(z)$ is a compact interval embedded in $N$. Let $z\in\Lambda$ be an $s$-boundary point. By the local product structure of $\Lambda$ and the definition of $s$-boundary point, up to reducing $\epsilon>0$ there exists a connected component $\ell\subset\Wu_\epsilon(z)\setminus\{z\}$ such that $z\in\partial\ell$ and $\ell\cap\Lambda=\varnothing$. 
Notice that, for all $t\in\R$, $\phi_t(\ell)$ is an interval in $\Wu(\phi_t(z))\setminus\{\phi_t(z)\}$ such that $\phi_t(z)\in\partial(\phi_t(\ell))$.

If needed, we further reduce the constant $\epsilon>0$ so that we can employ it in the definition of local product structure of $\Lambda$: there exists $\delta>0$ such that, for each pair of points $z_1,z_2\in\Lambda$ with $d(z_1,z_2)<\delta$, there exists a unique $t\in(-\epsilon,\epsilon)$ and a well defined bracket $\langle z_1,z_2\rangle\in\Ws_{\epsilon}(\phi_t(z_1))\cap\Wu_{\epsilon}(z_2)$.

Let $z'\in\Lambda$ be an $\omega$-limit of the orbit $t\mapsto\phi_{t}(z)$. Namely, there exists a sequence $t_n\to\infty$ such that 
\begin{align}
\label{e:convergence_BL}
z_n:=\phi_{t_n}(z)\to z'.
\end{align}
Up to removing finitely many elements from the sequence $z_n$, we can assume that $d(z_n,z')<\delta/2$, so that the brackets $\langle z_n,z'\rangle$ and $\langle z_n,z_m\rangle$ are well defined for all $n,m\in\N$. 
We claim that, up to further removing finitely many elements from the subsequence $z_n$, we have
\begin{align}
\label{e:same_projection}
\langle z_n,z' \rangle=z',\qquad\forall n\in\N. 
\end{align}
Indeed, the continuity of the bracket $\langle\,\cdot,\cdot\,\rangle$ implies that $z_n':=\langle z_n,z' \rangle\to z'$. If~\eqref{e:same_projection} does not hold for all but finitely many values of $n$, then the sequence $z_n'$ contains infinitely many pairwise distinct points. In particular, we can find $n_1<n_2<n_3$ such that the three points  $z_{n_1}', z_{n_2}', z_{n_3}'$ are pairwise distinct, and $z_{n_2}'$ lies in the interior of a compact interval $\ell'\subset\Wu_\epsilon(z')$ with boundary $\partial\ell'=\{z_{n_1}', z_{n_3}'\}$. This implies that the points $\langle z_{n_1},z_{n_2}\rangle,\langle z_{n_3},z_{n_2}\rangle\in\Lambda\cap\Wu_\epsilon(z_{n_2})$ belong to different path-connected components $\ell_1,\ell_2\subset\Wu_\epsilon(z_{n_2})\setminus\{z_{n_2}\}$, see Figure~\ref{f:projections}. Notice that $\phi_{-t_{n_2}}(\Wu_\epsilon(z_{n_2}))\subset\Wu_\epsilon(z)$, and the intervals $\phi_{-t_{n_2}}(\ell_1)$ and $\phi_{-t_{n_2}}(\ell_2)$ belong to different path-connected components of $\Wu_\epsilon(z)\setminus\{z\}$. Therefore for one value of $i\in\{1,2\}$ we must have $\phi_{-t_{n_2}}(\ell_i)\subset\ell$. This gives a contradiction, since $\phi_{-t_{n_2}}(\ell_i)\cap\Lambda\neq\varnothing$ but $\ell\cap\Lambda=\varnothing$.

\begin{figure}
\begin{footnotesize}
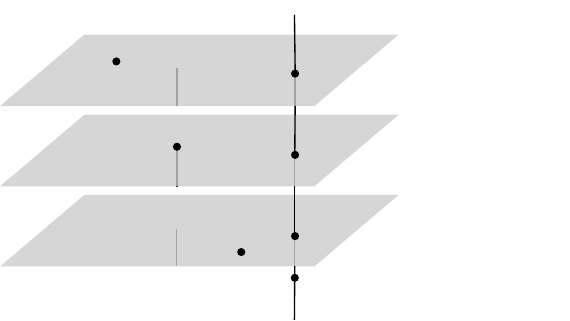
\end{footnotesize}
\caption{Projections $z_{n_i}'$ to the unstable manifold $\Wu(z')$.}
\label{f:projections}
\end{figure}

Having established~\eqref{e:same_projection}, we can now assume without loss of generality, up to modifying the values $t_n$ without affecting the conditions $t_n\to\infty$ and $z_n\to z'$, that
\begin{align}
\label{e:Wszn_cap_Wuz}
 \Ws_\epsilon(z_n)\cap\Wu_\epsilon(z') =\{z'\}.
\end{align} 
In particular
\begin{align}
\label{e:same_stable_mfds}
\Ws(z_n)=\Ws(z'). 
\end{align}
Since $z_{n+1}=\phi_{t_{n+1}-t_n}(z_n)$, we have that
\begin{align*}
 \phi_{t_{n+1}-t_n}(\Ws(z'))=
 \phi_{t_{n+1}-t_n}(\Ws(z_n))=
 \Ws(z_{n+1})=
 \Ws(z').
\end{align*}
For each $n_0\in\N$, the convergence 
\begin{align*}
 \lim_{n\to\infty} d(z_n,\phi_{t_n-t_{n_0}}(z'))= \lim_{n\to\infty} d(\phi_{t_n-t_{n_0}}(z_{n_0}),\phi_{t_n-t_{n_0}}(z'))=0,
\end{align*}
together with~\eqref{e:convergence_BL}, implies
\begin{align*}
 \lim_{n\to\infty} \phi_{t_n-t_{n_0}}(z') = z',\qquad\forall n_0\in\N.
\end{align*}

Equation~\eqref{e:Wszn_cap_Wuz} implies that $z_n\in\Ws_\epsilon(z')$ and then also $\phi_{t_n-t_{n_0}}(z')\in W^s_\epsilon(z')$. 
Up to extracting a subsequence, we can assume that all the $z_n$ belong to the same path-connected component $W_1$ of $\Ws_\epsilon(z')\setminus\{z'\}$. Let $W_2:=\Ws_\epsilon(z')\setminus(\{z'\}\cup W_1)$ be the other path-connected component, and $n_0$ an integer. 
Up to extracting a further subsequence, we can assume that there exists $j\in\{1,2\}$ such that
\begin{align*}
\phi_{t_{n}-t_{n_0}}(z')\in W_j\cup\{z'\},\qquad\forall n>n_0
\end{align*}
We consider the two possible cases:
\begin{itemize}
\item If $j=1$, we denote by $\ell''\subset\Ws(z')$ the compact interval with boundary $\partial\ell''=\{z_{n_0},z'\}$. We choose a large enough integer $n_1>n_0$ so that both points $z_{n_1}$ and $\phi_{t_{n_1}-t_{n_0}}(z')$ belong to $\ell''$, and therefore  $\phi_{t_{n_1}-t_{n_0}}(\ell'')\subset\ell''$.  Brouwer fixed point theorem implies that there exists a fixed point 
\[z''\in\ell''\cap\Fix(\phi_{t_{n_1}-t_{n_0}}).\]

\item If $j=2$, we fix any integer $n_1>n_0$. If $\phi_{t_{n_1}-t_{n_0}}(z')=z'$, we set
\begin{align*}
 z'':=z'.
\end{align*}
If $\phi_{t_{n_1}-t_{n_0}}(z')\neq z'$,  we have $\phi_{t_{n_1}-t_{n_0}}(z')\in W_2$, and we denote by $\ell''\subset\Ws(z')$ the compact interval with boundary $\partial\ell''=\{z_{n_1},\phi_{t_{n_1}-t_{n_0}}(z')\}$. We then choose a large enough integer $n_2>n_1$ so that  both points $z_{n_2}$ and $\phi_{t_{n_2}-t_{n_0}}(z')$ belong to $\ell''$, and therefore  $\phi_{t_{n_2}-t_{n_0}}(\ell'')\subset\ell''$. Brouwer fixed point theorem implies that there exists a fixed point $z''\in\ell''\cap\Fix(\phi_{t_{n_2}-t_{n_0}})$.

\end{itemize}
In both cases, we obtained a point $z''\in\Per(X)\cap\Ws(z')$. Therefore $z'\in\Ws(z'')$, which implies that $z''\in\Lambda$ and $\Ws(z'')=\Ws(z')$. This, together with~\eqref{e:same_stable_mfds}, implies that $z\in\Ws(\gamma)$.
\end{proof}

The next lemma was established by Contreras and Oliveira \cite[Lem\-ma~3.2]{Contreras:2004vh}. Since our setting is slightly different, we provide a detailed proof for the reader's convenience.

\begin{Lemma}
\label{l:Ws_cap_Wu}
Let $\Lambda\subset\overline{\Per(X)}$ be the closure of a homoclinic class. Then
\[ \Ws(\Lambda)\cap\Wu(\Lambda)=\Lambda. \]
\end{Lemma}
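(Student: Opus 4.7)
The inclusion $\Lambda\subseteq\Ws(\Lambda)\cap\Wu(\Lambda)$ is immediate since each $z\in\Lambda$ lies on $\Ws(z)\cap\Wu(z)$. For the converse, the plan is to approximate an arbitrary $z\in\Ws(z^+)\cap\Wu(z^-)$ (with $z^{\pm}\in\Lambda$) by transverse heteroclinic points between nearby periodic orbits, and then argue that these heteroclinics all lie in $\Lambda$.

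By Lemma~\ref{l:basic_sets}, $\Lambda$ is the closure of a single homoclinic class and $\Per(X)\cap\Lambda$ is dense in $\Lambda$. I would choose periodic points $p_n^{\pm}\in\Per(X)\cap\Lambda$ with $p_n^{\pm}\to z^{\pm}$, and denote their closed orbits by $\gamma_n^{\pm}$; by Lemma~\ref{l:basic_sets} again, $\gamma_n^+\sim\gamma_n^-$. I then form the two-dimensional invariant submanifolds $\Ws(\gamma_n^+)$ and $\Wu(\gamma_n^-)$, and the two-dimensional orbital weak manifolds $W^+:=\bigcup_{s\in\R}\Ws(\phi_s(z^+))$ and $W^-:=\bigcup_{s\in\R}\Wu(\phi_s(z^-))$. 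Note that $z\in W^+\cap W^-$, and the standard continuity of the invariant manifolds of a hyperbolic set with respect to the base point ensures that $\Ws(\gamma_n^+)\to W^+$ and $\Wu(\gamma_n^-)\to W^-$ in $C^1$ on compact neighborhoods of $z$.

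Next I would invoke the hyperbolic cone fields on $\overline{\Per(X)}$ to show that $W^+$ and $W^-$ intersect transversely at $z$: since $z$ lies simultaneously in the stable and unstable laminations of $\overline{\Per(X)}$, $T_z W^+$ sits in the stable cone $\oplus\,\mathrm{span}\{X(z)\}$ while $T_z W^-$ sits in the unstable cone $\oplus\,\mathrm{span}\{X(z)\}$, so that the two 2-planes meet only in the flow direction and $W^+\pitchfork W^-$ at $z$. Persistence of transverse intersections under $C^1$ perturbation then yields, for all sufficiently large $n$, a transverse intersection point $r_n\in \Ws(\gamma_n^+)\cap \Wu(\gamma_n^-)$ with $r_n\to z$. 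Since $\gamma_n^+\sim\gamma_n^-$, property~\eqref{e:both_heteroclinics} forces $r_n\in\overline{\Per(X)}$; and by the spectral decomposition \eqref{e:spectral_decomposition_Per} each $r_n$ lies in a single basic set $\Lambda_{i(n)}$, which must equal $\Lambda$ because the $\omega$-limit of $r_n$ is contained in $\gamma_n^+\subset\Lambda$. Thus $r_n\in\Lambda$, and by closedness $z=\lim r_n\in\Lambda$.

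The main technical hurdle is the transversality of $W^+$ and $W^-$ at $z$: the Kupka--Smale hypothesis directly gives transversality only between stable and unstable manifolds of \emph{periodic} orbits, while $z^{\pm}$ are arbitrary points of $\Lambda$. The cone-field argument above is the key input that transfers this transversality from $\overline{\Per(X)}$ to the ambient laminations, and it is what makes the $C^1$-persistence step yield the approximating heteroclinics $r_n$.
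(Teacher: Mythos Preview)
Your approximation strategy is natural, and when $W^+$ and $W^-$ happen to meet transversely at $z$ it does yield heteroclinics $r_n\in\Lambda$ with $r_n\to z$. You also correctly identify the crux: transversality of $W^+$ and $W^-$ at $z$. The gap is that the cone-field justification you offer for it is circular.

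Stable and unstable cone fields for the hyperbolic set $\overline{\Per(X)}$ live only on a neighborhood $U$ of that set, and their forward/backward invariance holds only along orbit segments that remain in $U$. You know $\phi_t(z)\to\Lambda$ as $t\to\pm\infty$, but nothing rules out the orbit of $z$ leaving $U$ at intermediate times. To place $T_zW^+$ in a stable cone one would flow forward until $\phi_T(z)\in\Ws_\epsilon(\phi_T(z^+))$, where the local stable manifold theorem gives cone containment; but then $T_zW^+=d\phi_{-T}\big(T_{\phi_T(z)}W^+\big)$, and backward invariance of the stable cone along the segment from $\phi_T(z)$ to $z$ requires that segment to stay in $U$. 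By local maximality of $\overline{\Per(X)}$, having the whole orbit of $z$ in $U$ is exactly $z\in\overline{\Per(X)}$, which is the conclusion sought. The same obstruction hits $T_zW^-$ in the opposite time direction, so one cannot confine both tangent planes to their cones without assuming what is to be proved. In short, the Kupka--Smale hypothesis does not propagate transversality from periodic orbits to the full laminations $\Ws(\Lambda)$, $\Wu(\Lambda)$ automatically.

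The paper's proof confronts the possible failure of transversality head-on: it allows the connected component $\ell_0\subset\Ws_\epsilon(z')\cap\Wu(z'')$ through $z_0$ to be a nondegenerate arc, passes to its endpoint $z_1$ nearest $z'$, and then invokes Lemma~\ref{l:bonatti_langevin}. Since in the remaining case $z'$ lies on no periodic stable manifold, that lemma forces $z'$ not to be an $s$-boundary, hence to be accumulated by $\Per(X)\cap\Lambda$ from \emph{both} sides of the local stable leaf $W$. This two-sidedness replaces transversality: the nearby periodic stable leaves $W_n$ lie on one side of $W$, the piece of $\Wu(z'')$ at $z_1$ enters that same side, and the approximating $\Wu(z_n'')$ must then meet $W_n$ near $z_1$ for purely topological reasons. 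Those intersection points lie in $\Lambda$ and converge to $z_1$, giving $z_1\in\Lambda$ and a posteriori $\ell_0=\{z_0\}$. The missing ingredient in your argument is precisely this use of Lemma~\ref{l:bonatti_langevin}.
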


\begin{proof}
We consider an arbitrary point $z_0\in \Ws(\Lambda)\cap\Wu(\Lambda)$. We need to prove that $z_0\in\Lambda$. We already remarked in Equation~\eqref{e:both_heteroclinics} that this holds if $z_0\in \Ws(\gamma_1)\cap\Wu(\gamma_2)$ for some closed orbits $\gamma_1,\gamma_2\subset\Per(X)\cap \Lambda$. 
Therefore, it remains to consider the case in which $z_0$ does not belong to the intersection of the stable and unstable manifolds of two closed orbits in $\Lambda$. Let us consider the case in which $z_0$ does not belong to the stable manifold of a closed orbit in $\Lambda$, the other case being analogous. Let $z'\in\Lambda\setminus\Per(X)$ and $z''\in\Lambda$ be two points such that $z_0\in \Ws(z')\cap\Wu(z'')$. Without loss of generality, we can assume that $z_0$ belongs to the interior of the local stable manifold $\Ws_\epsilon(z')$ for an arbitrarily small $\epsilon>0$; for this, it suffices to replace $z_0$, $z'$, and $z''$ with $\phi_t(z_0)$, $\phi_t(z')$, and $\phi_t(z'')$ respectively for a large time $t>0$. 

A priori, the path-connected component $\ell_0\subset\Ws_\epsilon(z')\cap\Wu(z'')$ containing $z_0$ might be a compact interval that is not just the singleton $\{z_0\}$. The point $z'$ cannot be contained in $\ell_0$, since the stable and unstable laminations $\Ws(\Lambda)$ and $\Wu(\Lambda)$ intersect transversely at $z'\in\Lambda$. We consider the point $z_1\in\partial\ell_0\cap\Ws_\epsilon(z')$ that is closest to $z'$. 
In order to complete the proof, it is enough to show that
\begin{align*}
 z_1\in\Lambda,
\end{align*}
since from this we would immediately infer that  $T_{z_1}\Ws(z')\neq T_{z_1}\Wu(z'')$, and therefore that $\ell_0=\{z_1\}=\{z_0\}$ and $z_0\in\Lambda$. 

\begin{figure}
\begin{footnotesize}
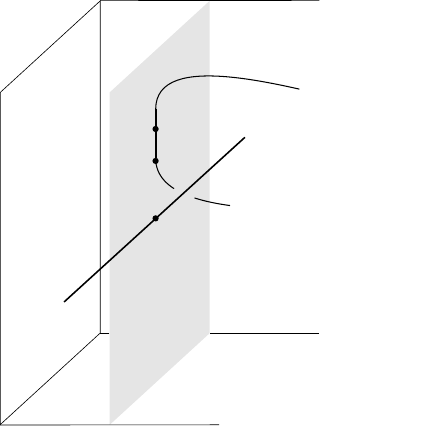
\end{footnotesize}
\caption{The open set $U=U_1\cup W\cup U_2$ with an interval of heteroclinic intersections with boundary on $z_1$.}
\label{f:intersection}
\end{figure}

We set
\begin{align*}
 W:=\bigcup_{t\in(-\epsilon,\epsilon)} \interior(\Ws_\epsilon(\phi_t(z'))),
\end{align*}
and we consider a tubular neighborhood $U\subset N$ of $W$, so that $W$ separates $U$ into two path-connected components $U_1$ and $U_2$. Let $\ell_1\subset\Wu(z'')\cap U$ be the path-connected component containing $z_1$, and notice that $\ell_0\subset\ell_1$. If $z_1$ were a point of transverse intersection of $\Ws(\Lambda)$ and $\Wu(\Lambda)$, $\ell_1$ would intersect both $U_1$ and $U_2$. In general, since we chose $z_1$ in the boundary $\partial\ell_0$,  $\ell_1$ intersects at least one of these two subsets, say $U_1$, in any neighborhood of $z_1$ (Figure~\ref{f:intersection}).
Since $z_0$ does not belong to the stable manifold of a closed orbit in $\Lambda$, the same is true for $z_1$ and $z'$. Lemma~\ref{l:bonatti_langevin} implies that $z'$ is not an $s$-boundary, and therefore belongs to both $\overline{\Per(X)\cap\Lambda\cap U_1}$ and $\overline{\Per(X)\cap\Lambda\cap U_2}$. This allows us to choose a sequence $z_n'\in\Per(X)\cap\Lambda\cap U_1$ such that $z_n'\to z'$. We denote by $W_n\subset U\cap \Ws(\Lambda)$ the path-connected component  containing $z_n'$. Since $\Ws(\Lambda)$ is a lamination, $W_n$ does not intersect $W$, and therefore is contained in $U_1$. Moreover, $W_n$ tends to $W$ as $n\to\infty$, in the sense that
\begin{align*}
 W=\bigcap_{m\in\N} \overline{\bigcup_{n\geq m} W_n}.
\end{align*}
We choose a sequence $z''_n\in\Per(X)\cap\Lambda$ such that $z_n''\to z''$. The unstable manifold $\Wu(z_n'')$ accumulates on $\Wu(z'')$, meaning
\begin{align*}
 \Wu(z'')\subseteq\bigcap_{m\in\N} \overline{\bigcup_{n\geq m} \Wu(z_n'')}.
\end{align*}
For any neighborhood $V\subset U$ of $z_1$ and for all $n$ large enough, the surface
 $W_n$ intersects $\ell_1\cap V$, and therefore intersects $\Wu(z_m'')\cap V$ as well for all $m$ large enough. This implies that, up to extracting subsequences of $z_n'$ and $z_n''$, we have a sequence of  intersection points $z_n\in W_n\cap\Wu(z_n'')$ such that $z_n\to z_1$. Since $z_n',z_n''\in\Per(X)$, we have  $W_n\cap\Wu(z_n'')\subset\Lambda$. Since $\Lambda$ is compact and $z_n\in\Lambda$, we infer that $z_1\in\Lambda$ as well.
\end{proof}

\subsection{Heteroclinic rectangles}
\label{ss:heteroclinic_rectangle}

The stable and unstable laminations of a hyperbolic invariant set might not be closed subspaces of $N$. In this subsection, we essentially show that they are locally closed at least near the closure of a homoclinic class $\Lambda$ containing infinitely many closed orbits. We do not know if such a statement holds in the general setting of this section. Our proof will be valid in the contact setting, and will employ a broken book decomposition as in Section~\ref{ss:broken_book_decomposition}.

Let $\Sigma\subset N$ be a surface transverse to the vector field $X$. A \emph{heteroclinic rectangle} of $\Lambda$ is a compact 2-disk $D\subset\Sigma$ whose boundary is the union of four pairwise distinct smooth compact intervals 
\[\partial D=\ell_1\cup\ell_2\cup\ell_3\cup\ell_4,\] 
with $\ell_1,\ell_3\subset\Ws(\Lambda)$, $\ell_2,\ell_4\subset\Wu(\Lambda)$, and $\ell_i\cap\ell_{i+1}=\{z_{i+1}\}$ for all $i\in\Z/4\Z$, see Figure~\ref{f:heteroclinic}. By Lemma~\ref{l:Ws_cap_Wu}, every corner $z_i$ belongs to the invariant set $\Lambda$, and therefore it is a transverse intersection between $\Ws(\Lambda)$ and $\Wu(\Lambda)$.

\begin{figure}
\begin{footnotesize}
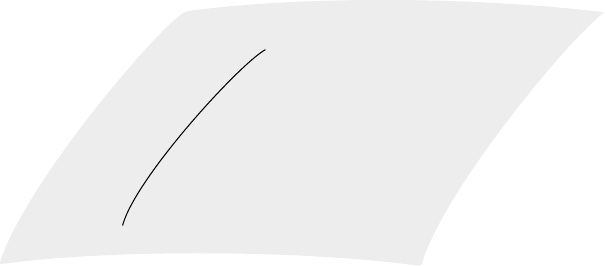
\end{footnotesize}
\caption{A heteroclinic rectangle.}
\label{f:heteroclinic}
\end{figure}

\begin{Lemma}
\label{l:Ws_Wu_closed}
Let $(N,\lambda)$ be a closed contact 3-manifold whose Reeb vector field $X=X_\lambda$ satisfies the following two conditions:
\begin{itemize}
\item[$(i)$] The closure $\overline{\Per(X)}$ is hyperbolic.\vspace{3pt}

\item[$(ii)$] The Kupka-Smale transversality condition holds: $\Ws(\gamma_1)\pitchfork\Wu(\gamma_2)$ for all closed Reeb orbits $\gamma_1,\gamma_2\subset\Per(X)$.

\end{itemize}
Let $\Lambda\subset\overline{\Per(X)}$ be the closure of a homoclinic class containing infinitely many closed Reeb orbits, $\Sigma$ a page of a broken book decomposition of $(N,\lambda)$,  and $D\subset\interior(\Sigma)$ a heteroclinic rectangle of $\Lambda$ with sufficiently small diameter. Then $\Ws(\Lambda)\cap D$ and $\Wu(\Lambda)\cap D$ are compact, and their union is path-connected.
\end{Lemma}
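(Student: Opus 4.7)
The plan is to equip $D$ with a product structure modeled on the stable and unstable laminations of $\Lambda$, and to deduce both compactness and path-connectedness from it. By Lemma~\ref{l:basic_sets}, $\Lambda$ is one of the basic sets in the spectral decomposition of $\overline{\Per(X)}$, hence locally maximal and enjoying the local product structure of Section~\ref{ss:local_product_structure}. Lemma~\ref{l:Ws_cap_Wu} places the four corners of $D$ inside $\Lambda$ and guarantees that every transverse intersection of the two laminations in $D$ belongs to $\Lambda$. I take $\diam(D)$ to be smaller than both the local product constant of $\Lambda$ and the uniform size of the local stable and unstable manifolds, so that $D$ is contained in a single product chart.

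The crucial step is to show that for each $y\in\Lambda$ the leaf $\Ws(y)$ meets $D$ in a disjoint union of smooth arcs, each crossing $D$ from $\ell_2$ to $\ell_4$. Since $\Sigma$ is transverse to $X$ and $\Ws(y)$ is tangent to $X$, the intersection $\Ws(y)\cap\Sigma$ is a smooth $1$-submanifold without boundary, so the components of $\Ws(y)\cap D$ are either closed loops in $\interior(D)$ or compact arcs with endpoints on $\partial D$. The Stokes argument from Lemma~\ref{l:diam_circle}, using $d\lambda|_{\Ws(y)}=0$ and that $d\lambda$ is an area form on $\interior(\Sigma)$, rules out the loops when $\diam(D)$ is small enough. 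Because distinct leaves of the stable lamination are disjoint, no component can meet $\ell_1$ or $\ell_3$ without coinciding with it. Finally, the continuity of $\Es$ on $\Lambda$ together with its transversality to $\Eu$ forces each remaining arc to stay nearly $\Es$-tangent throughout $D$, hence monotone across it, so its two endpoints lie on the opposite unstable sides $\ell_2$ and $\ell_4$. A time-reversed argument gives the symmetric statement for $\Wu(\Lambda)\cap D$.

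Compactness then follows easily. A stable arc $\alpha\subset\Ws(\Lambda)\cap D$ meets $\ell_2$ at a single point $w(\alpha)$; since $\ell_2\subset\Wu(\Lambda)$, Lemma~\ref{l:Ws_cap_Wu} yields $w(\alpha)\in\Lambda\cap\ell_2$, and conversely each point of $\Lambda\cap\ell_2$ lies on a unique stable arc crossing $D$. The compact set $\Lambda\cap\ell_2$ therefore parametrizes these arcs, and the standard continuity of the stable foliation on the uniformly hyperbolic set $\Lambda$ gives convergence of arcs over convergent parameters. If $z_n\in\Ws(\Lambda)\cap D$ with $z_n\to z\in D$, then after passing to a subsequence $w(\alpha_n)\to w_*\in\Lambda\cap\ell_2$, the arcs $\alpha_n$ converge to the stable arc $\alpha_*$ through $w_*$, and $z\in\alpha_*\subset\Ws(\Lambda)$. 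Hence $\Ws(\Lambda)\cap D$ is closed and compact; a symmetric argument handles $\Wu(\Lambda)\cap D$.

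Path-connectedness is then immediate: the boundary $\partial D=\ell_1\cup\ell_2\cup\ell_3\cup\ell_4$ is a circle contained in the union $(\Ws(\Lambda)\cap D)\cup(\Wu(\Lambda)\cap D)$, since $\ell_1,\ell_3\subset\Ws(\Lambda)$ and $\ell_2,\ell_4\subset\Wu(\Lambda)$, every stable arc in $D$ reaches $\ell_2$, and every unstable arc reaches $\ell_1$, so every point of the union is path-connected along an arc to $\partial D$. The main technical obstacle I foresee is the monotonicity claim of the second paragraph, namely excluding a component of $\Ws(y)\cap D$ with both endpoints on the same unstable side $\ell_i$: for this the small-diameter assumption on $D$ together with the continuity and transversality of $\Es$ and $\Eu$ on $\Lambda$ are essential, in order to prevent any stable arc from ``turning back'' inside $D$.
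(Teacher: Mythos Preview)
Your approach has a genuine gap at the very first step of the ``crucial'' paragraph. You assert that the components of $\Ws(y)\cap D$ are closed loops or compact arcs with endpoints on $\partial D$, on the grounds that $\Ws(y)\cap\interior(\Sigma)$ is a smooth $1$-submanifold. But the global stable manifold $\Ws(y)$ is only \emph{injectively immersed}, not properly embedded, so $\Ws(y)\cap\interior(\Sigma)$ need not be closed in $\interior(\Sigma)$. Consequently a connected component of $\Ws(y)\cap D$ can a priori be a half-open arc whose ``free'' end accumulates somewhere inside $D$ without ever reaching $\partial D$. Nothing you have invoked so far excludes this, and in fact ruling it out is essentially equivalent to the compactness statement you are trying to prove. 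Your monotonicity argument (``the arc stays nearly $\Es$-tangent throughout $D$'') does not help here, and is in any case unjustified: the bundle $\Es$ is only defined on $\Lambda$, whereas a long piece of $\Ws(y)$ returning into $D$ may lie far from $\Lambda$, so you have no control on its tangent direction.

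The paper's proof avoids this issue entirely by a different mechanism. Rather than analysing the abstract structure of $\Ws(y)\cap D$, it takes an arbitrary $z'\in D\cap\Wu(\Lambda)$, approximates it by points $z_n'\in D\cap\Wu(z_n'')$ with $z_n''$ on closed orbits in $\Lambda$, flows $z_n'$ backward until it lands in the \emph{local} unstable manifold $\Wu_\epsilon(\gamma_{z_n''})$ on some page $\Sigma_j$, and then connects it there by a short path $\zeta_0$ to the periodic orbit itself. The arrival map pushes $\zeta_0$ forward to a path $\zeta_1$ in $\Wu(\Lambda)\cap\Sigma$, and Proposition~\ref{p:large_return} (the broken-book diameter estimate) forces the arrival time to stay bounded, so $\zeta_1$ actually lands in $D$ and reaches either the closed orbit or $\ell_1\cup\ell_3\subset\Ws(\Lambda)$---in either case a point of $\Lambda$. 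This exhibits $z_n'$ on a short unstable arc through a point of $\Lambda$, giving $z_n'\in\Wu_\epsilon(\Lambda)$ directly. The broken book is doing real work here that your purely hyperbolic-dynamics argument does not supply.
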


\begin{proof}
Since $\interior(\Sigma)$ is transverse to the Reeb vector field $X$, the intersections $\interior(\Sigma)\cap\Ws(\Lambda)$ and $\interior(\Sigma)\cap\Wu(\Lambda)$ are transverse, and therefore their path-connected components are 1-dimensional manifolds.
Let $\epsilon>0$ be small enough so that, for each $z\in\Lambda$, the spaces $\Ws_{\epsilon}(z)$ and $\Wu_{\epsilon}(z)$ are embedded compact 1-dimensional submanifolds with boundary (i.e.~compact intervals) of $N$. 
For each invariant subset $V\subset\Lambda$, we employ the  notation
\begin{align*}
 \Ws_{\epsilon}(V):=\bigcup_{z\in V} \Ws_{\epsilon}(z),
\qquad
\Wu_{\epsilon}(V):=\bigcup_{z\in V} \Wu_{\epsilon}(z)
.
\end{align*}
Notice that, since $\Lambda$ is compact, the subsets $\Ws_{\epsilon}(\Lambda)$ and $\Wu_{\epsilon}(\Lambda)$ are compact as well.

We denote the Reeb orbit of a point $z\in N$ by 
\[\gamma_z(t):=\phi_t(z).\]
We require the diameter $\diam(D)$ to be small enough so that, for each $z\in D\cap\Lambda$, the path-connected component $\ell\subset\Ws(\Lambda)\cap D$ containing $z$ is actually contained in $\Ws_{\epsilon}(\gamma_z)$, and is a compact interval with boundary $\partial\ell\subset\partial D$; notice that, in this case, we must have $\partial\ell\subset\interior(\ell_2\cup\ell_4)$, where $\ell_2$ and $\ell_4$ are the two smooth segments in $\partial D\cap\Wu(\Lambda)$. Analogously, we require the path-connected component $\ell'\subset\Wu(\Lambda)\cap D$ containing $z$ to actually be contained in $\Wu_{\epsilon}(\gamma_z)$, and to be a compact interval with boundary $\partial\ell\subset\interior(\ell_1\cup\ell_3)$, where $\ell_1$ and $\ell_3$ are the two smooth segments in $\partial D\cap\Ws(\Lambda)$.

Let $\Sigma_1,...,\Sigma_h$ be a finite collection of pages of a broken book decomposition of $(N,\lambda)$ such that each Reeb orbit intersects at least one of these pages. We further require the diameter $\diam(D)$ to be small enough so that
\begin{align}
\label{e:D_small_diameter}
 \diam(D)<\min\big\{c(\Sigma_j,\Sigma)\ \big|\ j=1,...,h\big\},
\end{align}
where the positive constants $c(\Sigma_j,\Sigma)>0$ are those provided by Proposition~\ref{p:large_return}.

Since $\Ws(\Lambda)\cap\Wu(\Lambda)=\Lambda$ according to Lemma~\ref{l:Ws_cap_Wu}, we have
\begin{align*}
\Wu(\Lambda)\cap \partial D = \Wu_{\epsilon}(\Lambda)\cap \partial D,
\qquad
\Ws(\Lambda)\cap \partial D = \Ws_{\epsilon}(\Lambda)\cap \partial D.
\end{align*}
Consider now an arbitrary point $z'\in \interior(D)\cap\Wu(\Lambda)$. We must have $z'\subset\Wu(z'')$ for some $z''\in \Lambda$. Since $\Lambda$ is a basic set (Lemma~\ref{l:basic_sets}) containing infinitely many closed Reeb orbits, every closed Reeb orbit in $\Lambda$ is non isolated in $\Per(X)\cap\Lambda$ (Remark~\ref{r:basic_set_with_infinitely_many_closed_orbits}).
 Therefore, we can find two sequences $y_n''\in\Per(X)\cap\Lambda\setminus K$ and $y_n'\in\Wu(y_n'')$ such that $y_n'\to z'$ and $y_n''\to z''$, where $K\subset N$ is the binding of the broken book decomposition of $(N,\lambda)$. Since $y_n'\to z'$, there exists a sequence $t_n\to0$ such that $\phi_{t_n}(y_n')\in D$. We set 
\begin{align*}
z_n'':=\phi_{t_n}(y_n'')\in \Per(X)\cap\Lambda\setminus K,
\qquad
z_n':=\phi_{t_n}(y_n')\in D\cap\Wu(z_n''),
\end{align*}
and notice that $z_n'\to z'$ and $z_n''\to z''$.
We fix a value of $n$, and a page $\Sigma_j\in\{\Sigma_1,...,\Sigma_h\}$ whose interior intersects $\gamma_{z_n''}$. Since $d(\phi_{-t}(z_n'),\phi_{-t}(z_n''))\to0$ as $t\to\infty$, there exists an arbitrarily large $t>0$ and a smooth embedded path
\[\zeta_0:[0,1]\hookrightarrow\Sigma_j\cap\Wu_{\epsilon}(\gamma_{z_n''})\]
such that $\zeta_0(0)=\phi_{-t}(z_n')$ and $\zeta_0(1)\in\gamma_{z_n''}$. 
Notice that
\[\dot\zeta_0(1)\pitchfork\Ws(\gamma_{z_n''}).\]
By the implicit function theorem, there exist a maximal $s_0\in(0,1]$ and a smooth function $\tau:[0,s_0)\to(0,\infty)$ such that $\tau(0)=t$ and $\zeta_1(s):=\phi_{\tau(s)}(\zeta_0(s))\in D$ for all $s\in[0,s_0)$.
Notice that the smooth path $\zeta_1:[0,s_0)\to D$ satisfies $\zeta_1(0)=z_n'$, and is contained in the unstable lamination $\Wu(\Lambda)$.
The function $\tau$ must be bounded, otherwise Proposition~\ref{p:large_return} would imply that $\diam(\zeta_1([0,s_0)))>c(\Sigma_j,\Sigma)$, contradicting~\eqref{e:D_small_diameter}. Therefore,  $\tau$ admits an extension to a smooth function of the closed interval $\tau:[0,s_0]\to(0,\infty)$, and $\zeta_1$ admits a smooth extension 
$\zeta_1:[0,s_0]\to \Wu(\Lambda)\cap D$. Since $s_0$ was maximal, we have two possible cases: 
\begin{itemize}

\item $s_0=1$, and therefore $\zeta_1(1)\in\gamma_{z_n''}\subset\Lambda$;\vspace{3pt}

\item $s_0<1$, and therefore $\zeta_1(s_0)\in\Wu(\Lambda)\cap(\ell_1\cup\ell_3)\subset \Ws(\Lambda)\cap\Wu(\Lambda)=\Lambda$.

\end{itemize}
In both cases, $\zeta_1(s_0)\in\Lambda\cap D$, which implies 
$z_n'\in \Wu_{\epsilon}(\Lambda)\cap D$.
Since $\Wu_{\epsilon}(\Lambda)$ is compact and $z_n'\to z'$, we conclude that $z'\in\Wu_{\epsilon}(\Lambda)$. Summing up, we have proved that
\begin{align*}
\Wu(\Lambda)\cap D = \Wu_{\epsilon}(\Lambda)\cap D,
\end{align*}
and therefore $\Wu(\Lambda)\cap D$ is compact. An analogous argument shows that 
\[\Ws(\Lambda)\cap D=\Ws_{\epsilon}(\Lambda)\cap D\] 
is compact, and therefore $(\Ws(\Lambda)\cup\Wu(\Lambda))\cap D$ is compact as well.

Finally, $(\Ws(\Lambda)\cup\Wu(\Lambda))\cap D$ is path-connected. Indeed, it contains the path-connected boundary $\partial D$. For each $z\in D\cap\Wu(\Lambda)$, the path-connected component $\ell\subset D\cap\Wu(\Lambda)$ containing $z$ has boundary $\partial\ell\subset\ell_1\cup\ell_3\subset\partial D$. Analogously,  for each $z'\in D\cap\Ws(\Lambda)$, the path-connected component $\ell'\subset D\cap\Ws(\Lambda)$ containing $z'$ has boundary $\partial\ell'\subset\ell_2\cup\ell_4\subset\partial D$.
\end{proof}

\section{Proof of the main results}
\label{s:proofs}

\subsection{Reeb flows}
\label{ss:proof_Reeb}
We now have all the ingredients needed to carry out the proof of our main general result for Reeb flows.

\begin{proof}[Proof of Theorem~\ref{t:Reeb}]
Let $(N,\lambda)$ be a closed connected contact manifold with Reeb vector field $X=X_\lambda$, Reeb flow $\phi_t:N\to N$, and satisfying the assumptions (i) and (ii) of the theorem: the closure of the subspace of closed Reeb orbits $\overline{\Per(X)}$ is hyperbolic, and the stable and unstable manifolds of closed Reeb orbits intersect transversely. The first assumption implies in particular that all closed Reeb orbits are hyperbolic, and therefore non-degenerate. By a recent result of Colin, Dehornoy, and Rechtman~\cite[Theorem~1.2]{Colin:2020tl}, which generalizes an earlier result of Cristofaro-Gardiner, Hutchings, and Pomerleano \cite{Cristofaro-Gardiner:2019wf}, the contact manifold $(N,\lambda)$ has either two or infinitely many closed Reeb orbits. However, it cannot have only two closed Reeb orbits, since in this case another recent result of Cristofaro-Gardiner, Hryniewicz, Hutchings, and Liu \cite[Th.~1.2]{Cristofaro-Gardiner:2021tc} would imply that the two closed Reeb orbits are elliptic. We conclude that $(N,\lambda)$ has infinitely many closed Reeb orbits\footnote{In the case in which $(N,\lambda)$ is the unit tangent bundle of a closed Riemannian surface $(M,g)$, so that the Reeb flow of $(N,\lambda)$ is the geodesic flow of $(M,g)$, the existence of infinitely many closed Reeb orbits (that is, closed geodesics of $(M,g)$) holds unconditionally, without the need of assumptions (i) and (ii). Indeed, for closed surfaces $M$ of positive genus it is an elementary result: infinitely many closed geodesics can be found as the shortest closed curves in infinitely many homotopy classes of free loops that are not multiples of one another. For $M=S^2$, the existence of infinitely many closed geodesics is a deep result that follows by combining the work of Bangert \cite{Bangert:1993ue} with the work of Franks \cite{Franks:1992wu} or, alternatively, Hingston \cite{Hingston:1993wb}.}.

We consider Smale's spectral decomposition of $\overline{\Per(X)}$ as a finite disjoint union
\begin{align*}
\overline{\Per(X)}
=
\Lambda_1 \cup ...\cup \Lambda_k,
\end{align*}
where each $\Lambda_j$ is a basic set. Since there are infinitely many closed Reeb orbits, at least one of the basic sets $\Lambda\in\{\Lambda_1,...,\Lambda_k\}$ contains infinitely many closed Reeb orbits. We fix such a basic set from now on.

Let us assume by contradiction that the Reeb flow of $(N,\lambda)$ is not Anosov. By Bowen and Ruelle's Theorem~\ref{t:Bowen_Ruelle}, the basic set $\Lambda$ must be of measure zero in the closed manifold $N$. Consider the stable and unstable laminations $\Ws(\Lambda)$ and $\Wu(\Lambda)$. All points in the complement $\Ws(\Lambda)\setminus\Lambda$ have $\omega$-limit in $\Lambda$, and all points in the complement $\Wu(\Lambda)\setminus\Lambda$ have $\alpha$-limit in $\Lambda$. In particular, none of the points in $(\Ws(\Lambda)\cup\Wu(\Lambda))\setminus\Lambda$ is recurrent. By Poincar\'e's recurrence theorem,  $(\Ws(\Lambda)\cup\Wu(\Lambda))\setminus\Lambda$ is of measure zero in $N$, and therefore $\Ws(\Lambda)\cup\Wu(\Lambda)$ is of measure zero as well.

We now consider a broken book decomposition of our closed contact manifold $(N,\lambda)$, as introduced in Section~\ref{ss:broken_book_decomposition}. Since the basic set $\Lambda$ contains infinitely many closed Reeb orbits, there exists a page of the broken book $\Sigma\subset N$ such that 
\[\interior(\Sigma)\cap\Lambda\neq\varnothing.\]
We fix such a page, and a point $z_1\in\Per(X)\cap\interior(\Sigma)\cap\Lambda$ on a closed Reeb orbit $\gamma_1(t)=\phi_t(z_1)$. Notice that such a point $z_1$ exists, for $\Per(X)\cap\Lambda$ is dense in $\Lambda$. Moreover, since the basic set $\Lambda$ contains infinitely many closed Reeb orbits, $\gamma_1$ is non-isolated in $\Per(X)\cap\Lambda$ (Remark~\ref{r:basic_set_with_infinitely_many_closed_orbits}). We then choose a second point $z_2\in\Per(X)\cap\interior(\Sigma)\cap\Lambda$ such that $d(z_1,z_2)<\epsilon$ lying on a closed Reeb orbit $\gamma_2(t)=\phi_t(z_2)$ distinct from $\gamma_1$. Here, $d:N\times N\to[0,\infty)$ denotes an auxiliary Riemannian distance, and $\epsilon>0$ is a small enough positive constant that we will fix in the course of the proof.

We first require $\epsilon>0$ to be small enough so that the brackets $\langle z_1,z_2\rangle$ and $\langle z_2,z_1\rangle$ are well defined and, for some times $t_1,t_2\in\R$, the points $w_1:=\phi_{t_1}(\langle z_1,z_2\rangle)$ and $w_2:=\phi_{t_2}(\langle z_2,z_1\rangle)$ are contained in $\interior(\Sigma)$. Here, $t_1$ and $t_2$ may not be unique, and in such a case we choose them having the smallest possible absolute value, so that $t_1\to0$ and $t_2\to0$ as $\epsilon\to0$. We further require $\epsilon$ to be small enough so that, for each $i,j\in\{1,2\}$ with $i\neq j$, $w_i$ and $z_i$ can be joined by a compact smooth curve $\ell_{ii}\subset \Ws(\gamma_i)\cap\interior(\Sigma)$,  $w_i$ and $z_j$ can be joined by a compact smooth curve $\ell_{ij}\subset \Wu(\gamma_j)\cap\interior(\Sigma)$, and the union $\ell_{11}\cup\ell_{12}\cup\ell_{22}\cup\ell_{21}$ is the piecewise smooth boundary of a compact 2-disk $D\subset\interior(\Sigma)$. With the terminology of Section~\ref{ss:heteroclinic_rectangle}, $D$ is a heteroclinic rectangle of $\Lambda$. Notice that $\diam(D)\to0$ as $\epsilon\to0$. We require $\epsilon>0$ to be small enough so that the following three conditions are verified:
\begin{itemize}

\item[(a)] $\diam(D)<c(\Sigma,\Sigma)$, where $c(\Sigma,\Sigma)>0$ is the constant given by Proposition~\ref{p:large_return};

\item[(b)] the bracket $\langle y_1,y_2\rangle$ is defined for any pair of points $y_1,y_2\in D\cap\overline{\Per(X)}$;

\item[(c)] the space $\big(\Ws(\Lambda)\cup\Wu(\Lambda)\big)\cap D$ is compact and path-connected, according to Lemma~\ref{l:Ws_Wu_closed}.

\end{itemize}
Since $\Lambda$ is the closure of a homoclinic class (Lemma~\ref{l:basic_sets}), condition~(b) implies
\begin{align}
\label{e:D_cap_Per_in_Lambda}
 D\cap\overline{\Per(X)}\subset\Lambda.
\end{align}
We choose an arbitrary connected component $D'$ of $D\setminus (\Ws(\Lambda)\cup\Wu(\Lambda))$, which is an open subset of $\interior(D)$. 
Since the complement $D\setminus D'$ is path-connected, $D'$ is an open 2-disk.

Since $D'$ is a non-empty open subset, by Poincar\'e's recurrence there exists a point $z_0\in D'$ such that $\phi_{t_0}(z_0)\in D'$ for some $t_0>0$. By the implicit function theorem, there exists a maximal connected open neighborhood $U\subset\interior(\Sigma)$ of $z_0$ and a smooth function
$\tau:U\to(0,\infty)$
such that $\tau(z_0)=t_0$ and $\phi_{\tau(z)}(z)\in\Sigma$ for all $z\in U$. As we already remarked at the end of Section~\ref{ss:broken_book_decomposition}, the associated arrival map
\begin{align*}
\psi:U\to\Sigma,\qquad \psi(z)=\phi_{\tau(z)}(z)
\end{align*}
is a diffeomorphism onto its image that preserves the area form $d\lambda$, i.e.\ $\psi^*d\lambda=d\lambda|_U$. Notice that
$\psi(z_0)=\phi_{t_0}(z_0)\in D'$.
Let $U'$ be the connected component of $D'\cap U$ containing $z_0$. We claim that 
\begin{align}
\label{e:psi_n_U_in_D}
\psi(U')\subset D'. 
\end{align}
Indeed, $\psi(U')$ is connected and contains the point $\psi(z_0)$. If  $\psi(U')$ were not contained in $D'$, there would exist $z\in U'$ such that $\psi(z)\in\partial D'\subset \Ws(\Lambda)\cup\Wu(\Lambda)$, which is impossible since $D'\cap(\Ws(\Lambda)\cup\Wu(\Lambda))=\varnothing$ and $\Ws(\Lambda)\cup\Wu(\Lambda)$ is invariant by the Reeb flow.

We claim that
\begin{align}
\label{e:tau_n_unbounded}
 (D'\setminus U')\cup(D'\setminus\psi(U'))\neq\varnothing.
\end{align}
Indeed, if we had the equalities $U'=\psi(U')=D'$, we would have a diffeomorphism $\psi|_{D'}:D'\to D'$ that preserves the area form $d\lambda$. Since the integral of $d\lambda$ on the open 2-disk $D'$ is finite, Brouwer's plane translation theorem would imply the existence of a fixed point $z=\psi(z)\in D'$, which would therefore be contained in $D'\cap\Per(X)$, and therefore in $\Lambda$ according to~\eqref{e:D_cap_Per_in_Lambda}. This would give a contradiction, since $D'\cap\Lambda=\varnothing$.

Assume first that
\begin{align}
\label{e:added_first_subcase_D}
D'\setminus U'\neq\varnothing,
\end{align}
and fix a point $y_1\in\partial U'\cap D'$.
We identify the closure $\overline{D'}$ with a compact disk in $\R^2$, so that we can employ cartesian coordinates. We choose a second point $y_2\in U'$ that is close enough to $y_1$ so that
\begin{align}
\label{e:close_to_z1}
|y_2-y_1|<\min_{y\in\partial D'}|y_2-y|.
\end{align}
We denote by $B_r\subset \overline{D'}$ the open ball of radius $r>0$ centered at $y_2$, i.e.
\begin{align*}
 B_r=\Big\{y\in \overline{D'}\ \Big|\ |y_2-y|< r\Big\}.
\end{align*}
We fix $r>0$ to be the maximal positive radius such that $B_r\subset U'$. By~\eqref{e:close_to_z1}, there exists a point $y_3\in (D'\setminus U')\cap\partial B_r$. We consider the smooth path
\begin{align*}
 \zeta_0:[0,1]\to D',\qquad
 \zeta_0(s)=s y_3 + (1-s) y_2,
\end{align*}
see Figure~\ref{f:D}.
\begin{figure}
\begin{footnotesize}
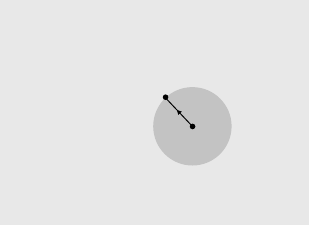
\end{footnotesize}
\caption{The open disk $D'$.}
\label{f:D}
\end{figure}
The function $\tau\circ\zeta_0|_{[0,1)}$ must be unbounded, since $\zeta_0(1)\not\in U'$. Therefore, Proposition~\ref{p:unbounded_Ws} implies that $\zeta_0(1)\in\Ws(\Kbr)$, where $\Kbr\subset N$ is the broken binding of the broken book decomposition. The stable lamination $\Ws(\Kbr)$ is tangent to the boundary $\partial B_r$ at $\zeta_0(1)$; indeed, any sufficiently short smooth curve $\ell\subset \Ws(\Kbr)$ such that $\zeta_0(1)\in\ell$ must be disjoint from the open set $U$ where the arrival time $\tau$ is well-defined. In particular, we have \[\dot\zeta_0(1)\pitchfork \Ws(\Kbr).\] 
Proposition~\ref{p:large_return} implies that the curve $\zeta_1:=\psi\circ\zeta_0:[0,1)\to D'$ has diameter
\begin{align*}
\diam(\zeta_1([0,1)))\geq c(\Sigma,\Sigma).
\end{align*}
However, this implies
\[\diam(D)\geq\diam(D')\geq\diam(\zeta_1([0,1))\geq c(\Sigma,\Sigma),\] 
which contradicts property (a) above, and thus shows that~\eqref{e:added_first_subcase_D} does not hold.

According to~\eqref{e:tau_n_unbounded}, it remains to consider the case $D'\setminus\psi(U')\neq\varnothing$. The argument is analogous to the one of the previous paragraph, replacing the stable lamination $\Ws(\Kbr)$ with the unstable lamination $\Wu(\Kbr)$. Briefly, if we set 
\[\tau_-:=-\tau\circ\psi^{-1}:\psi(U)\to(-\infty,0),\]
we can find a smooth path $\zeta_0:[0,1]\to D'$ such that $\zeta_0([0,1))\subset\psi(U')$, the function  $\tau_-\circ\zeta_0|_{[0,1)}$ is unbounded, $\zeta_0(1)\in\Wu(\Kbr)$, and $\dot\zeta_0(1)\pitchfork\Wu(\Kbr)$.
Proposition~\ref{p:large_return} and Remark~\ref{r:lambda_-lambda} imply 
\[\diam(D)\geq\diam(D')\geq\diam(\psi^{-1}\circ\zeta_0([0,1))\geq c(\Sigma,\Sigma),\] 
which once again contradicts property (a) above.
\end{proof}

\subsection{Geodesic flows}
\label{ss:proof_geodesic_flows}
Let $(M,g)$ be a closed Riemannian surface. Its unit tangent bundle
$S^gM:=\big\{ (x,v)\in TM\ \big|\ \|v\|_g=1 \big\}$
is equipped with the canonical contact form
\begin{align*}
\lambda_{(x,v)}(w)=g(v,d\pi(x,v)w),\qquad
\forall(x,v)\in S^gM,\ w\in T_{(x,v)}S^gM.
\end{align*}
Here, $\pi:S^gM\to M$ denotes the base projection $\pi(x,v)=x$. The associated Reeb vector field $X^g$, defined by the equations $\lambda(X^g)\equiv1$ and $d\lambda(X^g,\cdot)\equiv0$, is the geodesic vector field. The associated Reeb flow $\phi^g_t:S^gM\to S^gM$ is the geodesic flow;  namely, its orbits are of the form $\phi^g_t(\gamma(0))=(\gamma(t))$, where the curve $\gamma=(x,\dot x)$ is the lift of a geodesic $x:\R\to M$ of $(M,g)$ parametrized with unit speed $\|\dot x\|_g\equiv1$. From now on, with a slight abuse of terminology, we will call $g$-geodesics the orbits of $\phi^g_t$, and closed $g$-geodesics the closed orbits of $\phi^g_t$. In classical Riemannian terminology \cite{Anosov:1982ti}, a Riemannian metric all of whose closed geodesics are non-degenerate (in the sense of Equation~\ref{e:strong_nondegeneracy}) is called \emph{bumpy}.

Let $\gamma$ be a closed $g$-geodesic, and $\Sigma\subset S^gM$ a surface transverse to the geodesic vector field $X^g$ and containing the point $z_0:=\gamma(0)$. Let $t_0>0$ be the minimal period of $\gamma$. On a sufficiently small open neighborhood $U\subset\Sigma$ of $z_0$ there exists a unique smooth function $\tau:U\to(0,\infty)$ such that $\tau(z_0)=t_0$ and
\begin{align}
\label{e:return_map}
\psi(z):=\phi^g_{\tau(z)}(z)\in\Sigma,\qquad\forall z\in U.
\end{align} 
As we already remarked at the end of Section~\ref{ss:broken_book_decomposition}, the first-return map is a diffeomorphism onto its image 
$\psi:U\to\Sigma$
that preserves the area form $d\lambda|_\Sigma$, i.e.\ $\psi^*d\lambda=d\lambda$. Notice that $z_0$ is a fixed point of $\psi$, and the Floquet multipliers of $\gamma$ are precisely the eigenvalues of the linearized first return map $d\psi(z_0)$.

We denote by $\GG^2(
M)$ the space of smooth Riemannian metrics on the closed surface $M$, equipped with the $C^2$ topology. We then denote by 
\[\AAA(M)\subset\GG^2(M)\]  
the subset of those $g\in\GG^2(M)$ whose geodesic flow $\phi^g_t$ is Anosov. It is well known that $\AAA(M)$ is open in $\GG^2(M)$, see \cite{Anosov:1967wm}.

\begin{Lemma}
\label{l:stably_no_elliptic}
Let $(M,g_0)$ be a closed Riemannian surface such that every Riemannian metric $g_1$ sufficiently $C^2$-close to $g_0$ has no elliptic closed $g_1$-geodesics. Then $g_0\in\overline{\AAA(M)}$.
\end{Lemma}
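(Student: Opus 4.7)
The plan is to produce, arbitrarily $C^2$-close to $g_0$, a Riemannian metric $g_1$ whose geodesic flow satisfies the two hypotheses of Theorem~\ref{t:Reeb}, and then conclude that $g_1\in\AAA(M)$. Fix a $C^2$-open neighborhood $\VV\subset\GG^2(M)$ of $g_0$ such that every $g\in\VV$ has no elliptic closed geodesic, and shrink it to a smaller $C^2$-open neighborhood $\VV_0$ with $\overline{\VV_0}\subset\VV$.

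First, I would use the bumpy metric theorem (Anosov--Klingenberg--Takens), combined with the $C^2$-genericity of the Kupka-Smale transversality condition for Riemannian geodesic flows (obtained by $C^2$-small perturbations localized near the heteroclinic intersections), to produce, arbitrarily $C^2$-close to $g_0$, a metric $g_1\in\VV_0$ that is bumpy (i.e.\ satisfies \eqref{e:strong_nondegeneracy}) and whose hyperbolic closed geodesics satisfy the Kupka-Smale condition. Since $g_1\in\VV$, no closed $g_1$-geodesic is elliptic. Combined with bumpiness, this forces every closed $g_1$-geodesic to be hyperbolic: the Floquet multipliers $\sigma,\sigma^{-1}$ of a $g_1$-geodesic cannot lie on $S^1$ (no elliptic orbit) and cannot equal $1$ (non-degeneracy), so both lie in $\R\setminus\{-1,0,1\}$, since $\sigma=-1$ would make the second iterate degenerate and contradict bumpiness. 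Hence the Kupka-Smale condition in hypothesis~(ii) of Theorem~\ref{t:Reeb} is now satisfied unconditionally for the closed $g_1$-geodesics.

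The main difficulty is to upgrade the \emph{pointwise} hyperbolicity of every closed $g_1$-geodesic to the \emph{uniform} hyperbolicity of the closure $\overline{\Per(X^{g_1})}$ required in hypothesis~(i) of Theorem~\ref{t:Reeb}. For this I would invoke the first author's ``star flow'' theorem for geodesic flows: if a Riemannian metric $g$ lies in the $C^2$-interior of the set of metrics all of whose closed geodesics are hyperbolic, then $\overline{\Per(X^{g})}$ is a uniformly hyperbolic invariant set. Our $g_1$ lies in the open set $\VV_0$; any metric $g\in\VV_0$ has no elliptic closed geodesic, and, after a further arbitrarily $C^2$-small bumpy perturbation (again available generically in a neighborhood of $g$), all its closed geodesics are hyperbolic. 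In particular $g_1$ is in the $C^2$-interior of the set of metrics whose closed geodesics are all hyperbolic, so the star flow theorem applies and gives the desired uniform hyperbolicity.

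Once both hypotheses of Theorem~\ref{t:Reeb} are verified for the Reeb flow of $(S^{g_1}M,\lambda^{g_1})$, the theorem yields that the geodesic flow $\phi_t^{g_1}$ is Anosov, i.e.\ $g_1\in\AAA(M)$. Since $g_1$ can be taken arbitrarily $C^2$-close to $g_0$, this proves $g_0\in\overline{\AAA(M)}$. The main obstacle is the star flow step: relating the pointwise hyperbolic behavior of periodic orbits, which is easy to guarantee by the assumption and a bumpy perturbation, to the uniform hyperbolicity of their closure, which is what Theorem~\ref{t:Reeb} demands.
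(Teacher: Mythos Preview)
Your approach is essentially the paper's: perturb within the open neighborhood $\VV$ to a Kupka--Smale metric $g_1$, invoke Contreras--Paternain's result (the paper cites it as \cite[Theorem~D]{Contreras:2002vb}; you call it the ``star flow theorem'') to pass from pointwise hyperbolicity of closed orbits to uniform hyperbolicity of $\overline{\Per(X^{g_1})}$, and then apply Theorem~\ref{t:Reeb}.

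One logical wrinkle worth cleaning up: your justification that $g_1$ lies in the $C^2$-interior of the set of metrics with all closed geodesics hyperbolic is, as written, incomplete---showing that every $g\in\VV_0$ can be \emph{perturbed} to a bumpy (hence all-hyperbolic) metric only yields density of that set in $\VV_0$, not that $\VV_0$ is contained in it. The fix is simpler than your detour: under the paper's definition, ``elliptic'' means $\sigma\in S^1$ (including $\pm1$), so ``no elliptic closed geodesic'' is already equivalent to ``every closed geodesic is hyperbolic''. Hence $\VV$ itself is an open set of all-hyperbolic metrics, and the bumpiness argument is unnecessary for this particular step (though of course still needed for Kupka--Smale).
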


\begin{proof}
By Kupka-Smale's theorem, which was proved for geodesic flows by Contreras and Paternain \cite[Theorem~2.5]{Contreras:2002vb}, there exists a residual subset $\BB\subset\GG^2(M)$ such that every $g\in\BB$ is bumpy and each pair of (not necessarily distinct) hyperbolic closed $g$-geodesics $\gamma_1,\gamma_2$ have transverse stable and unstable manifolds $\Ws(\gamma_1)\pitchfork\Wu(\gamma_2)$.

Let us assume that a Riemannian metric $g_0\in\GG^2(M)$ has an open neighborhood $\VV\subset\GG^2(M)$ such that no Riemannian metric $g\in\VV$ has an elliptic closed $g$-geodesic. A priori, this only implies that, for every $g\in\VV$, each closed $g$-geodesic  is hyperbolic. A stronger result due to Contreras and Paternain \cite[Theorem~D]{Contreras:2002vb} implies that actually, for every $g\in\VV$,  the closure of the space of closed $g$-geodesics $\overline{\Per(X^{g})}$ is a hyperbolic invariant set of the geodesic flow $\phi^g_t$. This allows us to apply Theorem~\ref{t:Reeb}, which implies that the geodesic flow $\phi^g_t$ of every $g\in\VV\cap\BB$ is Anosov, i.e.
\begin{align*}
\VV\cap\BB\subset\AAA(M).
\end{align*}
Therefore
$\VV\subset\overline{\VV\cap\BB}\subset\overline{\AAA(M)}$.
\end{proof}

We denote by
\begin{align*}
 \EE(M)\subset\GG^2(M)
\end{align*}
the open subset of those Riemannian metrics $g$ on the closed surface $M$ having a closed $g$-geodesic with Floquet multipliers in $S^1\setminus\{1,-1\}$. 

\begin{Lemma}
\label{l:elliptic}
Let $(M,g)$ be a closed Riemannian surface having an elliptic closed geodesic. Then $g\in\overline{\EE(M)}$.
\end{Lemma}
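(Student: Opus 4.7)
The proof splits naturally according to the value of the Floquet multiplier $\sigma\in S^1$ of the given elliptic closed $g$-geodesic $\gamma$. When $\sigma\in S^1\setminus\{1,-1\}$, the conclusion is immediate since $g\in\EE(M)\subset\overline{\EE(M)}$. The substance of the lemma is therefore to address the two boundary cases $\sigma=\pm 1$ by producing, in every $C^2$-neighborhood $\VV$ of $g$, some $g'\in\VV\cap\EE(M)$.

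In the case $\sigma=-1$, the orbit $\gamma$ is non-degenerate in the sense of~\eqref{e:strong_nondegeneracy}, so the linearized first return map $d\psi_g(z_0)$ does not have $1$ as an eigenvalue. The implicit function theorem then yields, for each $g'$ sufficiently $C^2$-close to $g$, a unique fixed point $z_{g'}$ of $\psi_{g'}$ near $z_0$, depending smoothly on $g'$, and the trace $t(g'):=\mathrm{tr}(d\psi_{g'}(z_{g'}))$ is a continuous function of $g'$ with $t(g)=-2$. I would then invoke the classical Jacobi-field perturbation calculus (as used, e.g., in Contreras-Paternain \cite{Contreras:2002vb}) to construct a smooth symmetric $2$-tensor $\eta$ supported in a small tubular neighborhood of $\gamma$ such that $\frac{d}{ds}t(g+s\eta)\big|_{s=0}\neq 0$. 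For sufficiently small $s$ of the appropriate sign, $t(g+s\eta)\in(-2,2)$, so the corresponding Floquet multiplier lies in $S^1\setminus\{1,-1\}$; choosing $s$ outside the countable set corresponding to the rational rotation numbers $2\cos(k\pi/m)$ ensures $g+s\eta\in\EE(M)$.

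In the case $\sigma=1$, the orbit $\gamma$ is degenerate and need not persist under perturbation. I would first reduce by iteration: writing $\gamma=\tilde\gamma^m$ with $\tilde\gamma$ primitive, the Floquet multiplier $\tilde\sigma$ of $\tilde\gamma$ satisfies $\tilde\sigma^m=1$, so $\tilde\sigma$ is an $m$-th root of unity. If $\tilde\sigma\in S^1\setminus\{1,-1\}$, then $g\in\EE(M)$ directly; if $\tilde\sigma=-1$, the previous case applies to $\tilde\gamma$. This leaves the subcase where $\tilde\gamma$ is primitive and itself degenerate with $\tilde\sigma=1$, so $d\psi_g(z_0)$ is either the identity or a non-trivial parabolic Jordan block. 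Here I would apply a two-step argument: first, the bumpy metric theorem of Contreras-Paternain \cite[Theorem~2.5]{Contreras:2002vb} provides a bumpy metric $g^\sharp$ arbitrarily $C^2$-close to $g$; a Morse-theoretic persistence argument on the free loop space guarantees that some closed $g^\sharp$-geodesic $\gamma^\sharp$ survives near $\tilde\gamma$, and bumpiness forces its Floquet multiplier $\sigma^\sharp\neq 1$. If $\sigma^\sharp\in S^1\setminus\{1,-1\}$, we are done; if $\sigma^\sharp=-1$, apply the previous paragraph's argument to $\gamma^\sharp$; and if $\sigma^\sharp$ is hyperbolic (necessarily with trace close to $2$), a further trace-decreasing perturbation of $g^\sharp$, using the same Jacobi-field calculus, pushes the multiplier back onto the unit circle, producing the desired $g'\in\EE(M)$.

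The main obstacle is the final subcase, where one must steer a hyperbolic Floquet multiplier onto the non-degenerate elliptic part of $S^1$ by \emph{Riemannian} rather than the more flexible Finsler perturbations. This is resolved by combining the bumpy-metric density with the trace-flexibility of Riemannian metric deformations along a closed geodesic---the same mechanism underlying the proofs of the Kupka-Smale and bumpy theorems for geodesic flows in \cite{Contreras:2002vb}.
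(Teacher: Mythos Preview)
The paper's proof is much shorter and uses a single tool uniformly for both boundary cases $\sigma=\pm1$: the Franks lemma for Riemannian geodesic flows of surfaces \cite[Cor.~4.2]{Contreras:2002vb}. Its key feature is that the $C^2$-small perturbation of $g$ can be chosen so that $\gamma$ \emph{remains} a closed geodesic of the new metric, while the linearized first-return map becomes any prescribed nearby element of $\Sp(2)$. Since a matrix with eigenvalues $\pm1$ lies in the closure of those with eigenvalues in $S^1\setminus\{1,-1\}$, one application finishes the proof; no persistence argument is needed.

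Your $\sigma=-1$ argument is essentially this, reached via the implicit function theorem; the extra avoidance of rational rotation numbers is unnecessary, since $\EE(M)$ by definition admits all multipliers in $S^1\setminus\{1,-1\}$.

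Your $\sigma=1$ argument, however, contains a genuine gap. The claim that ``a Morse-theoretic persistence argument on the free loop space guarantees that some closed $g^\sharp$-geodesic $\gamma^\sharp$ survives near $\tilde\gamma$'' is not justified and is not true in general: a primitive closed geodesic whose return map has parabolic (or identity) linearization can have trivial local homology and Lefschetz index zero, and may simply disappear under perturbation to a bumpy metric. Even when a nearby orbit does survive, you have no a~priori control placing its trace near~$2$, which you need for the subsequent step. The detour is also unnecessary: the Franks lemma does not require $\gamma$ to be non-degenerate---it fixes $\gamma$ as a geodesic of the perturbed metric---so the persistence issue never arises. You in fact invoke exactly this trace-flexibility in your final subcase; applying it directly to the original $\gamma$ when $\sigma=1$ collapses the whole case analysis to one line.
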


\begin{proof}
Let $(M,g_0)$ be a closed Riemannian surface with an elliptic closed $g_0$-geodesic $\gamma$. We denote by $\sigma,\sigma^{-1}\in S^1$ the Floquet multipliers of $\gamma$. If $\sigma\in S^1\setminus\{1,-1\}$, then $g_0\in\EE(M)$. Assume now that $\sigma\in\{1,-1\}$. Let $\Sigma\subset S^{g_0}M$ be an embedded surface transverse to the geodesic vector field $X^{g_0}$ and containing $z_0:=\gamma(0)$. On a sufficiently small open neighborhood $U\subset\Sigma$ of $z_0$ we have a smooth first-return map $\psi:U\to\Sigma$ defined as in~\eqref{e:return_map}. At its fixed point $z_0=\psi(z_0)$, we consider the linearized first-return map $d\psi(z_0)$, which is a linear symplectic isomorphism of the symplectic vector space $(T_{z_0}\Sigma,d\lambda)$. We identify this latter vector space with the standard symplectic vector space $(\C,\omega_0)$, where $\omega_0=\tfrac i2 d\overline z\wedge dz$,  so that $d\psi(z_0)$ is represented by a matrix $P_0\in\Sp(2)=\mathrm{SL}(2,\R)$.  
We denote by $W\subset\Sp(2)$ the open subset of those symplectic matrices whose eigenvalues are in $S^1\setminus\{1,-1\}$. Since the eigenvalues of $P_0$ are both equal to $\sigma\in\{1,-1\}$, we have
\begin{align*}
 P_0\in \overline W.
\end{align*}
We now apply a version of the Franks lemma for geodesic flows of surfaces, which was established by Contreras and Paternain \cite[Corollary~4.2]{Contreras:2002vb}: for every open neighborhood $\UU\subset\GG^2(M)$ of $g_0$, there exists an open neighborhood $Z\subset\Sp(2)$ of $P_0$ such that, for each $P_1\in Z$, 
there exists a Riemannian metric $g_1\in\UU$  such that $\gamma$ is also a closed $g_1$-geodesic and its linearized first-return map with respect to $g_1$ can be identified with $P_1$. By choosing $P_1\in Z\cap W$ we infer that, as a closed $g_1$-geodesic, $\gamma$ has Floquet multipliers in $S^1\setminus\{1,-1\}$, and therefore $g_1\in\EE(M)$. Since the neighborhood $\UU$ was arbitrary, this proves that $g_0\in\overline{\EE(M)}$. 
\end{proof}

\begin{proof}[Proof of Theorem~\ref{t:main}]
By Lemmas~\ref{l:stably_no_elliptic} and~\ref{l:elliptic}, the open subset $\AAA(M)\cup\EE(M)$ is dense in $\GG^2(M)$.
\end{proof}

\begin{proof}[Proof of Corollary~\ref{c:genus_0_1}]
By a result of Margulis \cite[Appendix]{Anosov:1967uq} (see also \cite{Plante:1972wn}), 
if the geodesic flow of a closed Riemannian surface $(M,g)$ is Anosov, then the fundamental group $\pi_1(S^gM)$ has exponential growth, and therefore the surface $M$ has genus larger than one. This, together with Theorem~\ref{t:main}, implies the corollary. 
\end{proof}

Let $(\Sigma,\omega)$ be a 2-dimensional symplectic manifold, that is, a surface $\Sigma$ equipped with an area 2-form $\omega$. We consider the space of symplectomorphisms 
\begin{align*}
\Symp(\Sigma,\omega) = \Big\{ \phi\in\Diff(\Sigma)\ \Big|\ \phi^*\omega=\omega \Big\}.
\end{align*}
Here, $\Diff(\Sigma)$ denotes as usual the space of smooth diffeomorphisms of $\Sigma$. Consider a symplectomorphism $\phi\in\Symp(\Sigma,\omega)$ having a fixed point $z\in\Fix(\phi)$. With the analogous terminology already introduced in Section~\ref{ss:non_degenerate} for Reeb orbits, the Floquet multipliers $\sigma,\sigma^{-1}$ of $z$ are the eigenvalues of $d\phi(z)$. The fixed point $z$ is:
\begin{itemize}

\item \emph{elliptic} when $\sigma\in S^1$,

\item  \emph{hyperbolic} when $\sigma\in\R\setminus\{-1,0,1\}$,

\item $k_0$-\emph{elementary}, for an integer $k_0\geq1$, when $\sigma^k\neq1$ for all $k\in\{1,...,k_0\}$.

\end{itemize}
Notice that a $k_0$-elementary fixed point can be either elliptic or hyperbolic.

We now focus on the special case of the 2-dimensional symplectic vector space $(\C,\omega_0)$, whose symplectic form is given by $\omega_0=\tfrac i2 d\overline z\wedge dz$. We consider the subspace of symplectomorphisms  fixing the origin
\begin{align*}
 \Symp_*(\C,\omega_0) = \Big\{ \phi\in\Symp(\C,\omega_0)\ \Big|\ \phi(0)=0 \Big\}.
\end{align*}
For each $k\geq1$, we denote by $\JJ^k$ the space of $k$-th jets at the origin $J^k_0\phi$ of such symplectic diffeomorphisms $\phi\in\Symp_*(\C,\omega_0)$. Notice that $\JJ^1$ is simply the linear symplectic group $\Sp(2)=\mathrm{SL}(2,\R)$.

Let us recall some classical notions that go back to Birkhoff \cite{Birkhoff:1920tl}. Let $\phi\in\Symp_*(\C,\omega_0)$ be a symplectomorphism with a 4-elementary elliptic fixed point at the origin. It turns out that, up to conjugation with a suitable symplectomorphism on a neighborhood $U\subset\C$ of the origin,  $\phi|_U$ can be put in Birkhoff normal form
\begin{align*}
 \phi|_U(z)
 =
 z\,e^{i(\theta + \beta|z|^2)} + \widetilde\phi(z),
\end{align*}
where $\theta\in\R$, $\beta\in\R$, and $\widetilde\phi$ is a smooth map that vanishes to order 4 at the origin. Notice that $e^{i\theta},e^{-i\theta}\in S^1$ are the Floquet multipliers of the fixed point at the origin, and therefore they are not $k$-th root of unity for $k=1,2,3,4$. The fixed point $0$ is called \emph{stable elliptic} when $\beta\neq0$. This is a condition on the 4-th jet $J^4_0\phi\in\JJ^4$. More generally, if $(\Sigma,\omega)$ is a 2-dimensional symplectic manifold, a fixed point $z_0$ of a symplectomorphism $\phi\in\Symp(\Sigma,\omega)$ is called stable elliptic when there is a chart $\kappa$ defined on a neighborhood of $z_0$ such that $\kappa(z_0)=0$, $\kappa^*\omega_0=\omega$, and  $\kappa\circ\phi\circ\kappa^{-1}$ is in Birkhoff normal form with a stable elliptic fixed point at the origin.

We denote by
\begin{align*}
 \SSS\subset\JJ^4
\end{align*}
the union of the conjugacy classes of the 4-th jets $J^4_0\phi$ of those symplectic diffeomorphisms $\phi\in\Symp_*(\C,\omega_0)$ in Birkhoff normal form with a stable elliptic fixed point at the origin.
We denote by
\begin{align*}
 \HH\subset\JJ^4
\end{align*}
the space of 4-th jets $J^4_0\phi$ of those symplectic diffeomorphisms $\phi\in\Symp_*(\C,\omega_0)$ with a hyperbolic fixed point at the origin.
The union $\SSS\cup\HH$ is open and dense in $\JJ^4$. 
Finally, for each $\sigma\in \R\cup S^1\subset\C$, we denote by 
\begin{align*}
\FF_\sigma\subset\JJ^4
\end{align*}
the open and dense subspace of 4-th jets $J^4_0\phi$ of those symplectic diffeomorphisms $\phi\in\Symp_*(\C,\omega_0)$ whose fixed point at the origin is 4-elementary and has Floquet multipliers different from $\sigma,\sigma^{-1}$.

We now apply these notions to the geodesic flows.
Let $(M,g)$ be a closed Riemannian surface, $\gamma$ a closed $g$-geodesic, and $\Sigma\subset S^gM$ a surface transverse to the geodesic vector field $X^g$ and containing the point $z_0:=\gamma(0)$. 
On a sufficiently small open neighborhood $U\subset\Sigma$ of $z_0$ we have a smooth first-return map $\psi:U\to\Sigma$ defined as in~\eqref{e:return_map}, which preserves the area form $d\lambda|_\Sigma$, i.e.\ $\psi^*d\lambda=d\lambda$, where $\lambda$ is the canonical contact form of $S^gM$.
 Notice that $z_0$ is a fixed point of $\psi$, and it is hyperbolic or elliptic if the closed geodesic $\gamma$ is hyperbolic or elliptic respectively. Extending this terminology, we say that $\gamma$ is $k_0$-elementary if the fixed point $z_0$ is $k_0$-elementary.
On a sufficiently small open neighborhood $V\subset U$ of $z_0$, there exists a Darboux chart, which is a diffeomorphism onto its image $\kappa:V\to\C$ such that $\kappa(z_0)=0$, $\kappa^*\omega_0=d\lambda$. The conjugated map $\kappa\circ\psi\circ\kappa^{-1}$, which is well defined on a neighborhood of the origin, has a fixed point at the origin. We denote by 
\begin{align*}
 \CC_\gamma\subset\JJ^4
\end{align*}
the conjugacy class of the 4-th jet $J^4_0(\kappa\circ\psi\circ\kappa^{-1})\in\JJ^4$. Notice that $\CC_\gamma$ is independent of the choice of the Darboux chart $\kappa$.
We say that the closed geodesic $\gamma$ is  \emph{stable elliptic} when $\CC_\gamma\subset \SSS$, that is, when there exists a Darboux chart $\kappa$ as above such that $\kappa\circ\psi\circ\kappa^{-1}$ is in Birkhoff normal form with a stable elliptic fixed point at the origin.

After these preliminaries, we address the $C^2$-stability conjecture for geodesic flows of closed Riemannian surfaces (Theorem~\ref{t:structural_stability}). By definition, if the geodesic flow $\phi_t^{g_0}$ of some Riemannian metric $g_0\in\GG^2(M)$ is $C^2$-structurally stable, then $g_0$ has  an open neighborhood 
\begin{align}
\label{e:nbhd_str_stab}
 \VV_{g_0}\subset\GG^2(M)
\end{align}
such that, for each $g_1\in\GG^2(M)$, there exists a homeomorphism
\begin{align*}
 \kappa_{g_1,g_0}:S^{g_0}M\to S^{g_1} M
\end{align*}
mapping $g_0$-geodesics to $g_1$-geodesics, i.e.
\begin{align*}
\Big\{\kappa_{g_1,g_0}\circ\phi_t^{g_0}(z)\ \Big|\ t\in\R\Big\} 
= 
\Big\{
\phi_t^{g_1}\circ\kappa_{g_1,g_0}(z)\ \Big|\ t\in\R
\Big\},\qquad \forall z\in S^{g_0}M.
\end{align*}
Notice that each $g_1\in\VV_{g_0}$ has a $C^2$-structurally stable geodesic flow  $\phi_t^{g_1}$ with associated neighborhood $\VV_{g_1}=\VV_{g_0}$.
The $C^2$-stability conjecture will be a direct consequence of Theorem~\ref{t:main} and of the following lemma.

\begin{Lemma}
\label{l:elliptic_prevents_stability}
If a closed Riemannian surface $(M,g)$ has an elliptic closed geodesic, then its geodesic flow $\phi^g_t$ is not $C^2$-struc\-tur\-al\-ly stable.
\end{Lemma}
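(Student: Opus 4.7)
Argue by contradiction: suppose $(M,g_0)$ has a $C^2$-structurally stable geodesic flow with witness neighborhood $\VV_{g_0}\subset\GG^2(M)$ and an elliptic closed geodesic $\gamma_0$; every $g\in\VV_{g_0}$ is then itself structurally stable with witness $\VV_g=\VV_{g_0}$. By Lemma~\ref{l:elliptic} the open set $\EE(M)$ meets $\VV_{g_0}$, so I may replace $g_0$ by a metric in $\EE(M)\cap\VV_{g_0}$ and assume that $\gamma_0$ has Floquet multiplier $\sigma_0=e^{i\theta_0}\in S^1\setminus\{1,-1\}$. Then $\gamma_0$ is $2$-elementary, hence persists under $C^2$-small perturbations to a unique nearby closed $g$-geodesic $\gamma_0^g$ depending $C^1$-continuously on $g$. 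After a further $C^2$-small metric perturbation supported in a small flow box along $\gamma_0$, adjusting only the higher-order terms of the return map, I may assume in addition that $\CC_{\gamma_0}\subset\SSS$, i.e.\ that the $4$-jet of the return map is stable elliptic with non-vanishing twist $\beta$ in its Birkhoff normal form.

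The plan is to exploit the rotation number of the first-return map as a topological invariant. Fix a transverse Darboux section $\Sigma_0$ at $z_0:=\gamma_0(0)$ and let $\psi_{g_0}$ denote the first-return map to $\Sigma_0$. Since $\CC_{\gamma_0}\subset\SSS$, Moser's invariant curve theorem produces a fundamental system of embedded $\psi_{g_0}$-invariant circles around $z_0$; the restriction of $\psi_{g_0}$ to each such circle is an orientation-preserving homeomorphism whose rotation number tends to $\theta_0/(2\pi)\pmod{1}$ as the circles shrink to $z_0$ (the return map being asymptotic to its linearization there). The set $\SSS$ is open in $\JJ^4$, so the same Moser structure persists for every $g\in\VV_{g_0}$ sufficiently close to $g_0$, with asymptotic rotation number $\theta(g)/(2\pi)\pmod{1}$ at $z_0^g:=\gamma_0^g(0)$, where $e^{i\theta(g)}$ is the Floquet multiplier of $\gamma_0^g$.

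Apply the Contreras-Paternain Franks lemma \cite[Cor.~4.2]{Contreras:2002vb}, as in the proof of Lemma~\ref{l:elliptic}, to produce $g_1\in\VV_{g_0}$ for which $\theta_1:=\theta(g_1)\not\equiv\theta_0\pmod{2\pi}$ and $\CC_{\gamma_0^{g_1}}\subset\SSS$. By $C^2$-structural stability there is a homeomorphic orbit equivalence $\kappa\colon S^{g_0}M\to S^{g_1}M$; a persistence argument based on the isolation of $\gamma_0$ (the unique closed $g$-geodesic in a fixed tubular neighborhood, for $g$ close to $g_0$) forces $\kappa(\gamma_0)=\gamma_0^{g_1}$. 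Then $\Sigma_1:=\kappa(\Sigma_0)$ is a topological transverse section at $\gamma_0^{g_1}$, and $\kappa$ intertwines the first-return maps: $\kappa\circ\psi_{g_0}=\psi_{g_1}\circ\kappa$ on $\Sigma_0$. Topological conjugacy preserves the rotation number on invariant circles, so the asymptotic rotation numbers at $z_0$ and $z_0^{g_1}$ must agree modulo $1$, forcing $\theta_1\equiv\theta_0\pmod{2\pi}$ and yielding the desired contradiction.

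\textbf{Main obstacle.} The subtle step is the persistence statement $\kappa(\gamma_0)=\gamma_0^{g_1}$, since $C^2$-structural stability as defined asserts only the bare existence of $\kappa$, with no a priori control on where it sends a particular closed orbit. I handle it using the $2$-elementarity of $\gamma_0$: the implicit function theorem gives a tubular neighborhood $\UU\subset S^{g_1}M$ in which the only closed $g_1$-geodesic is $\gamma_0^{g_1}$, and a separate continuity argument (or a bootstrap shrinking $g_1$ toward $g_0$) forces $\kappa(\gamma_0)\subset\UU$. A secondary but non-trivial point is the simultaneous realization, via the Franks lemma, of a prescribed Floquet multiplier and a prescribed $4$-jet in $\SSS$, which requires supplementing the $1$-jet control of \cite{Contreras:2002vb} with an auxiliary local metric perturbation fine-tuning the higher jet within the open stratum $\SSS$.
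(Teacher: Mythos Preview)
Your overall strategy---perturb into $\EE(M)\cap\VV_{g_0}$, upgrade to a stable elliptic closed geodesic, and use a conjugacy-invariant of the return map to derive a contradiction---is exactly the shape of the paper's argument, and your rotation-number invariant on Moser circles is essentially the content of Robinson's Lemma~22, which the paper invokes directly. The difference lies in how you handle what you correctly identify as the ``main obstacle,'' and there your argument has a genuine gap.

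The step $\kappa(\gamma_0)=\gamma_0^{g_1}$ cannot be justified as you propose. The definition of $C^2$-structural stability asserts only the \emph{existence} of some orbit equivalence $\kappa$; it gives no control on where $\kappa$ sends a particular orbit, no continuity of $\kappa$ in $g_1$, and no proximity of $\kappa$ to the identity. Your ``continuity argument (or a bootstrap shrinking $g_1$ toward $g_0$)'' tacitly assumes that $\kappa_{g_1,g_0}\to\mathrm{id}$ as $g_1\to g_0$, but nothing in the hypothesis guarantees this---$\kappa$ could, for instance, permute closed orbits in a way dictated by some symmetry or be far from the identity even for $g_1$ arbitrarily close to $g_0$. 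So the isolation of $\gamma_0^{g_1}$ in a tubular neighborhood $\UU$ is irrelevant: you have no mechanism forcing $\kappa(\gamma_0)$ to land in $\UU$.

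The paper sidesteps this entirely. Rather than perturbing $g_0$ along a single parameter via the Franks lemma and tracking the continuation $\gamma_0^{g_1}$, it chooses the second metric $g_2$ in a \emph{residual} set $\RR_\sigma$ (via Kupka--Smale for geodesic flows) so that \emph{every} closed $g_2$-geodesic is $4$-elementary with Floquet multipliers $\neq\sigma,\sigma^{-1}$. Then whatever closed orbit $\gamma_2:=\kappa(\gamma_1)$ happens to be, Robinson's lemma forces $\gamma_2$ to have Floquet multipliers $\sigma,\sigma^{-1}$, contradicting the choice of $g_2$. The point is that controlling \emph{all} closed orbits of the target metric eliminates any need to identify $\kappa(\gamma_1)$. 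Your argument can be repaired along these lines: replace your single Franks-lemma perturbation by a generic $g_2\in\RR_\sigma\cap\VV_{g_0}$, and then your rotation-number reasoning (or Robinson's lemma directly) applies to whichever orbit $\kappa$ selects.
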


\begin{proof}
This proof essentially follows from the work of Robinson \cite{Robinson:1970uo}.
Assume by contradiction that a Riemannian metric $g_0\in\GG^2(M)$ has an elliptic closed $g_0$-geodesic, but its geodesic flow $\phi_t^{g_0}$ is $C^2$-structurally stable. By Lemma~\ref{l:elliptic}, we have $g_0\in\overline{\EE(M)}$, and in particular $\EE(M)\cap\VV_{g_0}\neq\varnothing$. Notice that every $g_1\in\EE(M)\cap\VV_{g_0}$ has a $C^2$-structurally stable geodesic flow $\phi_t^{g_1}$ with $\VV_{g_1}=\VV_{g_0}$, and has a closed $g_1$-geodesic with Floquet multipliers in $S^1\setminus\{1,-1\}$. 

Since $\SSS\cup\HH$ is open and dense in $\JJ^4$, Kupka-Smale's theorem for geodesic flows \cite[Theorem~2.5]{Contreras:2002vb} implies that there exists a residual subset $\RR\subset\GG^2(M)$ such that, for each $g\in\RR$, every closed $g$-geodesic $\gamma$  satisfies $\CC_\gamma\subset\SSS\cup\HH$, that is, is hyperbolic or stable elliptic. Since $\RR$ is residual, the intersection $\RR\cap\EE(M)\cap\VV_{g_0}$ is non-empty, and we fix a Riemannian metric $g_1\in\RR\cap\EE(M)\cap\VV_{g_0}$, which must have a stable elliptic closed $g_1$-geodesic $\gamma_1$. 
We denote by $\sigma,\sigma^{-1}\in S^1\setminus\{1,-1\}$ the Floquet multipliers of $\gamma_1$, and apply once more Kupka-Smale's theorem for geodesic flows: since $\FF_\sigma$ is open and dense in $\JJ^4$, there exists a residual subset $\RR_\sigma\subset\GG^2(M)$ such that, for each $g\in\RR_\sigma$, every closed $g$-geodesic $\gamma$ satisfies $\CC_\gamma\subset\FF_\sigma$, that is, $\gamma$ is 4-elementary and has Floquet multipliers different from $\sigma,\sigma^{-1}$. Since $\RR_\sigma$ is residual, the intersection $\RR_\sigma\cap\VV_{g_0}$ is non-empty, and we fix a Riemannian metric therein $g_2\in\RR_\sigma\cap\VV_{g_0}$. Since both $g_1$ and $g_2$ are in $\VV_{g_0}$, we have a homeomorphism 
\[\kappa=\kappa_{g_2,g_1}:S^{g_1}M\to S^{g_2}M\] mapping $g_1$-geodesics to $g_2$-geodesics, i.e.
\begin{align*}
\Big\{\kappa\circ\phi_t^{g_1}(z)  \ \Big|\ t\in\R \Big\}
= 
\Big\{\phi_t^{g_2}\circ\kappa(z)  \ \Big|\  t\in\R\Big\},
\qquad
\forall z\in S^{g_1}M.
\end{align*}
We denote by $\gamma_2:=\kappa(\gamma_1)$ the closed $g_2$-geodesic corresponding to $\gamma_1$.
We fix a point $z_1\in\gamma_1$ and its image $z_2:=\kappa(z_1)\in\gamma_2$. Let $\Sigma_i\subset S^{g_i}M$ be an embedded surface transverse to the geodesic vector field $X^{g_i}$ and containing a point $z_i\in\gamma_i$. Let $U_i\subset\Sigma_i$ be a small enough open neighborhood of $z_i$ over which we have a well defined first-return map $\psi_i:U_i\to\Sigma_i$ as in~\eqref{e:return_map}. The point $z_1$ is a stable elliptic fixed point of $\psi_1$ with Floquet multipliers $\sigma,\sigma^{-1}\in S^1\setminus\{1,-1\}$, whereas every fixed point of $\psi_2$ is 4-elementary but with Floquet multipliers different than $\sigma,\sigma^{-1}$.

Notice that if $\kappa$ were a smooth diffeomorphism we could have chosen $\Sigma_2=\kappa(\Sigma_1)$; however, $\kappa$ is in general only a homeomorphism, and therefore the image $\kappa(\Sigma_1)$ is not necessarily smooth, let alone transverse to the geodesic vector field $X^{g_2}$. Nevertheless, for $\epsilon>0$ small enough, we have an embedded flow-box
\begin{align*}
 N:=\bigcup_{t\in(-\epsilon,\epsilon)} \phi_t^{g_2}(\Sigma_2),
\end{align*}
with associated smooth retraction $r:N\to\Sigma_2$ defined by
\begin{align*}
 r(\phi_t(z))=z,\qquad\forall z\in\Sigma_2,\ t\in(-\epsilon,\epsilon).
\end{align*}
A sufficiently small open neighborhood $V\subset U_1\cap \psi_1(U_1)$ of $z_1$ satisfies $\kappa(V)\subset N$.
The composition 
\[\chi:=r\circ\kappa:V\to \Sigma_2\] 
is a homeomorphism onto its image. On a sufficiently small open neighborhood $W\subset V$ of $z_1$, the map $\chi$ conjugates the first-return maps $\psi_1$ and $\psi_2$, i.e.
\begin{align*}
 \psi_1|_{W}=\chi^{-1}\circ\psi_2\circ\chi|_{W}.
\end{align*}
Since $z_1\in\Fix(\psi_1)$ is stable elliptic, and every $z\in\Fix(\psi_2)$ is 4-elementary, a statement due to Robinson \cite[Lemma~22]{Robinson:1970uo} implies that the corresponding fixed points $z_1$ and $z_2=\kappa(z_1)=\chi(z_1)$ have the same Floquet multipliers $\sigma,\sigma^{-1}$. This contradicts the fact that no fixed point of $\psi_2$ has Floquet multipliers $\sigma,\sigma^{-1}$.
\end{proof}

\begin{proof}[Proof of Theorem~\ref{t:structural_stability}]
Let $g_0\in\GG^2(M)$ be a Riemannian metric whose geodesic flow $\phi_t^{g_0}$ is $C^2$-structurally stable. Let $\VV_{g_0}\subset\GG^2(M)$ be the neighborhood of $g_0$ on which the structural stability holds, as defined in~\eqref{e:nbhd_str_stab}. Lemma~\ref{l:elliptic_prevents_stability} implies that every $g\in\VV_{g_0}$ has no elliptic closed $g$-geodesics, that is, all the closed $g$-geodesics are hyperbolic. By Theorem~\ref{t:main}, there exists a dense open subset $\UU\subset\GG^2(M)$ such that, for each $g\in\UU$, either there exists an elliptic closed $g$-geodesic or the geodesic flow $\phi_t^{g}$ is Anosov. Therefore, every $g\in\VV_{g_0}\cap\UU$ has an Anosov geodesic flow $\phi_t^{g}$. 

The proof is not complete yet, since a priori the original Riemannian metric $g_0$ might not be contained in $\UU$. Since for every $g\in\VV_{g_0}$ all the closed $g$-geodesics are hyperbolic, a theorem due to Contreras and Paternain \cite[Theorem~D]{Contreras:2002vb} implies that the closure of the space of closed $g_0$-geodesics $\overline{\Per(X^{g_0})}$ is a hyperbolic invariant set of the geodesic flow $\phi^{g_0}_t$. Consider a Riemannian metric $g_1\in\VV_{g_0}\cap\UU$, and the associated homeomorphism $\kappa_{g_1,g_0}:S^{g_0}M\to S^{g_1}M$ mapping $g_0$-geodesics to $g_1$-geodesics. Since the geodesic flow $\phi_t^{g_1}$ is Anosov, the space of closed $g_1$-geodesics $\Per(X^{g_1})$ is dense in $S^{g_1}M$, i.e. \[\overline{\Per(X^{g_1})}=S^{g_1}M.\]
Therefore
\begin{align*}
\overline{\Per(X^{g_0})} 
= \overline{\kappa_{g_1,g_0}^{-1}(\Per(X^{g_0}))}
=
\kappa_{g_1,g_0}^{-1}(\overline{\Per(X^{g_0})})
=
\kappa_{g_1,g_0}^{-1}(S^{g_1}M)
=
S^{g_0}M.
\end{align*}
This, together with the hyperbolicity of $\overline{\Per(X^{g_0})}$, means that the geodesic flow $\phi_t^{g_0}$ is Anosov.\end{proof}

\begin{proof}[Proof of Theorem~\ref{t:low_genus}]
Let $M$ be a closed surface of genus at most one, equipped with a Riemannian metric $g_0$. By Corollary~\ref{c:genus_0_1}, there exists a smooth Riemannian metric $g$ on $M$ that is arbitrarily $C^2$-close to $g_0$ and has an elliptic closed $g$-geodesic. This, together with Lemma~\ref{l:elliptic_prevents_stability}, implies that the geodesic flow $\phi_t^{g_0}$ is not $C^2$-structurally stable.
\end{proof}

\bibliographystyle{amsalpha}
\bibliography{biblio}

\def\cprime{$'$} \def\cprime{$'$} \def\cprime{$'$} \def\cprime{$'$}
\providecommand{\bysame}{\leavevmode\hbox to3em{\hrulefill}\thinspace}
\providecommand{\MR}{\relax\ifhmode\unskip\space\fi MR }
\providecommand{\MRhref}[2]{%
  \href{http://www.ams.org/mathscinet-getitem?mr=#1}{#2}
}
\providecommand{\href}[2]{#2}
\begin{thebibliography}{CGHHL21}

\bibitem[Ano67]{Anosov:1967wm}
D.~V. Anosov, \emph{Geodesic flows on closed {R}iemannian manifolds of negative
  curvature}, Trudy Mat. Inst. Steklov. \textbf{90} (1967), 209.

\bibitem[Ano82]{Anosov:1982ti}
\bysame, \emph{Generic properties of closed geodesics}, Izv. Akad. Nauk SSSR
  Ser. Mat. \textbf{46} (1982), no.~4, 675--709, 896.

\bibitem[Arn98]{Arnaud:1998aa}
M.-C. Arnaud, \emph{Le ``closing lemma'' en topologie {$C^1$}}, M\'{e}m. Soc.
  Math. Fr. (N.S.) (1998), no.~74, vi+120.

\bibitem[AS67]{Anosov:1967uq}
D.~V. Anosov and Ja.~G. Sina\u{\i}, \emph{Certain smooth ergodic systems},
  Russian Math. Surveys \textbf{22} (1967), no.~5, 103--167.

\bibitem[Ban93]{Bangert:1993ue}
V.~Bangert, \emph{On the existence of closed geodesics on two-spheres},
  Internat. J. Math. \textbf{4} (1993), no.~1, 1--10.

\bibitem[Bir20]{Birkhoff:1920tl}
G.~D. Birkhoff, \emph{Surface transformations and their dynamical
  applications}, Acta Math. \textbf{43} (1920), 1--119.

\bibitem[BL98]{Bonatti:1998uo}
C.~Bonatti and R.~Langevin, \emph{Diff\'eomorphismes de {S}male des surfaces},
  Ast\'erisque (1998), no.~250, viii+235.

\bibitem[Bow72]{Bowen:1972ws}
R.~Bowen, \emph{Periodic orbits for hyperbolic flows}, Amer. J. Math.
  \textbf{94} (1972), 1--30.

\bibitem[BR75]{Bowen:1975ua}
R.~Bowen and D.~Ruelle, \emph{The ergodic theory of {A}xiom {A} flows}, Invent.
  Math. \textbf{29} (1975), no.~3, 181--202.

\bibitem[CBP02]{Contreras:2002vb}
G.~Contreras-Barandiar\'{a}n and G.~P. Paternain, \emph{Genericity of geodesic
  flows with positive topological entropy on {$S^2$}}, J. Differential Geom.
  \textbf{61} (2002), no.~1, 1--49.

\bibitem[CDHR22]{Colin:2022aa}
V.~Colin, P.~Dehornoy, U.~Hryniewicz, and A.~Rechtman, \emph{Generic properties
  of $3$-dimensional {R}eeb flows: {B}irkhoff sections and entropy},
  arXiv:2202.01506, 2022.

\bibitem[CDR20]{Colin:2020tl}
V.~Colin, P.~Dehornoy, and A.~Rechtman, \emph{On the existence of supporting
  broken book decompositions for contact forms in dimension 3}, to appear in
  Invent. Math., 2020.

\bibitem[CGHHL21]{Cristofaro-Gardiner:2021tc}
D.~Cristofaro-Gardiner, U.~Hryniewicz, M.~Hutchings, and H.~Liu, \emph{Contact
  three-manifolds with exactly two simple {R}eeb orbits}, arXiv:2102.04970,
  2021.

\bibitem[CGHP19]{Cristofaro-Gardiner:2019wf}
D.~Cristofaro-Gardiner, M.~Hutchings, and D.~Pomerleano, \emph{Torsion contact
  forms in three dimensions have two or infinitely many {R}eeb orbits}, Geom.
  Topol. \textbf{23} (2019), no.~7, 3601--3645.

\bibitem[CKW21]{Climenhaga:2021aa}
V.~Climenhaga, G.~Knieper, and K.~War, \emph{Uniqueness of the measure of
  maximal entropy for geodesic flows on certain manifolds without conjugate
  points}, Adv. Math. \textbf{376} (2021), Paper No. 107452, 44.

\bibitem[CM22]{Contreras:2022aa}
G.~Contreras and M.~Mazzucchelli, \emph{Existence of {B}irkhoff sections for
  {K}upka-{S}male {R}eeb flows of closed contact 3-manifolds}, Geom. Funct.
  Anal. \textbf{32} (2022), no.~5, 951--979.

\bibitem[CO04]{Contreras:2004vh}
G.~Contreras and F.~Oliveira, \emph{{$C^2$}-densely, the 2-sphere has an
  elliptic closed geodesic}, Ergodic Theory Dynam. Systems \textbf{24} (2004),
  no.~5, 1395--1423, Michel Herman's memorial issue.

\bibitem[Con14]{Contreras:2014vo}
G.~Contreras, \emph{Generic {M}a\~n\'e {S}ets}, arXiv:1410.7141, 2014.

\bibitem[FH19]{Fisher:2019vz}
T.~Fisher and B.~Hasselblatt, \emph{{H}yperbolic {F}lows}, Zurich Lectures in
  Advanced Mathematics, European Mathematical Society, Berlin, 2019.

\bibitem[Fra92]{Franks:1992wu}
J.~Franks, \emph{Geodesics on {$S^2$} and periodic points of annulus
  homeomorphisms}, Invent. Math. \textbf{108} (1992), no.~2, 403--418.

\bibitem[Grj79]{Grjuntal:1979uy}
A.~I. Grjuntal{\cprime}, \emph{The existence of convex spherical metrics all of
  whose closed nonselfintersecting geodesics are hyperbolic}, Izv. Akad. Nauk
  SSSR Ser. Mat. \textbf{43} (1979), no.~1, 3--18, 237.

\bibitem[Gro85]{Gromov:1985ww}
M.~Gromov, \emph{Pseudo holomorphic curves in symplectic manifolds}, Invent.
  Math. \textbf{82} (1985), no.~2, 307--347.

\bibitem[Hin93]{Hingston:1993wb}
N.~Hingston, \emph{On the growth of the number of closed geodesics on the
  two-sphere}, Internat. Math. Res. Notices (1993), no.~9, 253--262.

\bibitem[Hur82]{Hurley:1982uc}
M.~Hurley, \emph{Attractors: Persistence, and density of their basins}, Trans.
  Amer. Math. Soc. \textbf{269} (1982), no.~1, 247--271.

\bibitem[HWZ98]{Hofer:1998vy}
H.~Hofer, K.~Wysocki, and E.~Zehnder, \emph{The dynamics on three-dimensional
  strictly convex energy surfaces}, Ann. of Math. (2) \textbf{148} (1998),
  no.~1, 197--289.

\bibitem[HWZ02]{Hofer:2002vt}
\bysame, \emph{Pseudoholomorphic curves and dynamics in three dimensions},
  Handbook of dynamical systems, {V}ol. 1{A}, North-Holland, Amsterdam, 2002,
  pp.~1129--1188.

\bibitem[HWZ03]{Hofer:2003wf}
\bysame, \emph{Finite energy foliations of tight three-spheres and
  {H}amiltonian dynamics}, Ann. of Math. (2) \textbf{157} (2003), no.~1,
  125--255.

\bibitem[Iri15]{Irie:2015aa}
K.~Irie, \emph{Dense existence of periodic {R}eeb orbits and {ECH} spectral
  invariants}, J. Mod. Dyn. \textbf{9} (2015), 357--363.

\bibitem[Mn88]{Mane:1988vo}
R.~Ma\~n{\'e}, \emph{A proof of the ${C}\sp 1$ stability conjecture}, Inst.
  Hautes \'Etudes Sci. Publ. Math. (1988), no.~66, 161--210.

\bibitem[New77]{Newhouse:1977wn}
S.~E. Newhouse, \emph{Quasi-elliptic periodic points in conservative dynamical
  systems}, Amer. J. Math. \textbf{99} (1977), no.~5, 1061--1087.

\bibitem[Poi05]{Poincare:1905ub}
H.~Poincar{\'e}, \emph{Sur les lignes g\'eod\'esiques des surfaces convexes},
  Trans. Amer. Math. Soc. \textbf{6} (1905), no.~3, 237--274.

\bibitem[PR83]{Pugh:1983aa}
C.~C. Pugh and C.~Robinson, \emph{The {$C^{1}$} closing lemma, including
  {H}amiltonians}, Ergodic Theory Dynam. Systems \textbf{3} (1983), no.~2,
  261--313.

\bibitem[PS70]{Palis:1970tw}
J.~Palis and S.~Smale, \emph{Structural stability theorems}, Global {A}nalysis
  ({P}roc. {S}ympos. {P}ure {M}ath., {V}ol. {XIV}, {B}erkeley, {C}alif., 1968),
  Amer. Math. Soc., Providence, R.I., 1970, pp.~223--231.

\bibitem[PT72]{Plante:1972wn}
J.~F. Plante and W.~P. Thurston, \emph{Anosov flows and the fundamental group},
  Topology \textbf{11} (1972), 147--150.

\bibitem[Rif12]{Rifford:2012aa}
L.~Rifford, \emph{Closing geodesics in {$C^1$} topology}, J. Differential Geom.
  \textbf{91} (2012), no.~3, 361--381.

\bibitem[Rob70]{Robinson:1970uo}
R.~C. Robinson, \emph{Generic properties of conservative systems}, Amer. J.
  Math. \textbf{92} (1970), 562--603.

\bibitem[RR21]{Rifford:2021aa}
L.~Rifford and R.~Ruggiero, \emph{On the stability conjecture for geodesic
  flows of manifolds without conjugate points}, Ann. H. Lebesgue \textbf{4}
  (2021), 759--784.

\bibitem[Rug91]{Ruggiero:1991aa}
R.~O. Ruggiero, \emph{On the creation of conjugate points}, Math. Z.
  \textbf{208} (1991), no.~1, 41--55.

\bibitem[Rug21]{Ruggiero:2021aa}
\bysame, \emph{Does the hyperbolicity of periodic orbits of a geodesic flow
  without conjugate points imply the {A}nosov property?}, Trans. Amer. Math.
  Soc. \textbf{374} (2021), no.~10, 7263--7280.

\bibitem[Sch21]{Schulz:2021wk}
B.~H. Schulz, \emph{Characterization of {A}nosov metrics on closed surfaces and
  stable ergodicity}, Ph.D. thesis, Ruhr-Universit\"at Bochum, 2021.

\end{thebibliography}
\end{document}